\documentclass{article}[11pt]
\usepackage{graphicx}
\usepackage[utf8]{inputenc}
\usepackage[T1]{fontenc}
\usepackage{amsmath}
\usepackage{amsthm}
\usepackage{amsfonts}
\usepackage{amssymb}
\usepackage{enumitem}
\usepackage[all,cmtip]{xy}
\usepackage{spectralsequences}
\usepackage[normalem]{ulem}
\usepackage{tikz-cd}

\usepackage{natbib}

\usepackage{url}
\usepackage{slashed}
\usepackage{bbm}
\usepackage{tikz-cd}
\usepackage{mathtools}
\usepackage{esint}

\usepackage{ifthen}
\usepackage{xspace}
\usepackage{fancyhdr}
\usepackage{aliascnt}
\usepackage[textsize=small]{todonotes}
\usepackage{bookmark}
\usepackage{caption}

\usetikzlibrary{calc}
\usepackage{cancel}

\usepackage[margin=1in]{geometry}

\usepackage[T1]{fontenc}

\usepackage{mlmodern}

\usepackage{microtype}

\usepackage{setspace}
\tikzset{curve/.style={settings={#1},to path={(\tikztostart)
    .. controls ($(\tikztostart)!\pv{pos}!(\tikztotarget)!\pv{height}!270:(\tikztotarget)$)
    and ($(\tikztostart)!1-\pv{pos}!(\tikztotarget)!\pv{height}!270:(\tikztotarget)$)
    .. (\tikztotarget)\tikztonodes}},
    settings/.code={\tikzset{quiver/.cd,#1}
        \def\pv##1{\pgfkeysvalueof{/tikz/quiver/##1}}},
    quiver/.cd,pos/.initial=0.35,height/.initial=0}

\newtheorem{theorem}{Theorem}[section]
\newtheorem{prop}[theorem]{Proposition}
\newtheorem{lemma}[theorem]{Lemma}
\newtheorem{cor}[theorem]{Corollary}

\theoremstyle{definition}
\newtheorem{definition}[theorem]{Definition}

\newtheorem{rem}[theorem]{Remark}

\newtheorem{manualtheoreminner}{Theorem}    
\newenvironment{manualtheorem}[1]{%
  \renewcommand\themanualtheoreminner{#1}%
  \manualtheoreminner
}{\endmanualtheoreminner}

\newcommand{\coker}{\textnormal{coker\,}}

\newcommand{\Pin}{\textnormal{Pin}}

\newcommand{\m}[1]{{\mathrm{#1}}}

\newcommand{\cat}[1]{\textbf{#1}}

\newcommand{\und}[1]{\hspace{#1}\text{and}\hspace{#1}}

\newcommand{\Comment}[2][\empty]{\ifthenelse{\equal{#1}{\empty}}{\todo[color=gray!10]{#2}\ }{\todo[color=gray!10,#1]{#2}}}

\usepackage{hyperref}

\title{Non-Existence of Smooth Full-Holonomy Cayley Fibrations}

\author{
Jianfeng Lin\thanks{Tsinghua University. Email: \texttt{linjian5477[]mail.tsinghua.edu.cn}}
\and
Viktor Majewski\thanks{University of Waterloo. Email: \texttt{viktor.majewski[]uwaterloo.ca}}
\and
Jacek Rzemieniecki\thanks{Humboldt Universit\"at zu Berlin. Email: \texttt{jacek.rzemieniecki[]hu-berlin.de}}
}

\date{\today}

\begin{document}

\maketitle

\begin{abstract}
    We prove that every Cayley fibration of a compact torsion-free $\mathrm{Spin}(7)$-manifold with full holonomy must have singular fibers. This confirms a long-standing expectation in the study of calibrated fibrations with exceptional holonomy. Our argument first uses the rigidity of the $\mathrm{Spin}(7)$-structure to reduce any hypothetical nonsingular Cayley fibration to two possible topological configurations. We then exclude both by proving a new spinnability theorem for smooth fiber bundles over simply-connected $4$-manifolds with fiber homeomorphic to the elliptic surfaces $E(2)$ or $E(4)$. The $E(4)$ case, which constitutes the main new difficulty, is established by combining families Seiberg--Witten theory with parametrized equivariant homotopy theory and equivariant $K$-theory.
\end{abstract}

\section{Introduction}
\label{Introduction}

Manifolds with exceptional holonomy provide a fundamental class of Riemannian manifolds with rigid geometric structure. In dimension eight, torsion-free $\m{Spin}(7)$-manifolds are characterized by a parallel Cayley calibration, whose calibrated submanifolds are volume-minimizing $4$-dimensional submanifolds. This naturally leads to the question of whether such manifolds contain global fibrations by Cayley submanifolds, and to what extent the existence of such a fibration is compatible with full $\m{Spin}(7)$-holonomy.

These fibrations are the $\m{Spin}(7)$-analogues of special Lagrangian fibrations in Calabi--Yau geometry and coassociative fibrations in $G_2$-geometry. In the recent years there has been a programme initiated by Donaldson \cite{donaldson2016adiabaticlimitscoassociativekovalevlefschetz} of systematically studying coassociative and Cayley fibrations of $G_2$- and $\rm{Spin(7)}$-manifolds, respectively. A guiding principle in all these settings is that global fibrations by calibrated submanifolds are highly constrained and typically forced to develop singularities.

This phenomenon is already visible in $G_2$-geometry. In the study of semi-flat $G_2$-manifolds and their coassociative fibrations, Baraglia \cite{Baraglia10} showed that full $G_2$-holonomy manifolds fibering over a $3$-manifold necessarily admit singular fibers, providing strong evidence that singularities are an intrinsic feature of fibrations of manifolds with full exceptional holonomy. Motivated by this picture, the following $\m{Spin}(7)$-version was conjectured \cite[p. 1]{englebert2024} that the torsion-free $\m{Spin}(7)$-manifold with full holonomy does not admit a smooth Cayley fibration. Our main result is a proof of this conjecture.

\begin{manualtheorem}{A}\label{main theorem}
    Suppose that $(X, \Phi)$ is a closed full-holonomy $\m{Spin}(7)$-manifold. Then $X$ does not admit a Cayley fibration without singular fibers.
\end{manualtheorem}

Our proof proceeds by contradiction, exploiting the rigid geometric structure of Cayley fibrations. Using fiberwise integration and the negative-definiteness of the $\mathrm{Spin}(7)$-intersection form $Q_\Phi(\alpha,\beta)=\langle\alpha\cup \beta\cup[\Phi],[X]\rangle$ on $H^2(X; \mathbb{R})$, we show that the fibre is simply connected and the base space is negative-definite. Combining this with characteristic-class identities for Cayley fibers, the multiplicativity of Euler characteristic and signature, and the relation between these topological invariants on a full-holonomy $\mathrm{Spin}(7)$-manifold, we arrive at a Diophantine equation \eqref{Diophantic},
\begin{equation*}
    \Big(b^2_-(B) - 2\Big) \cdot \Big(b^2_+(F)  + 1 \Big) = 8.
\end{equation*}
Together with the topology of simply connected smooth $4$ manifolds, this reduces the possible homeomorphism types of the hypothetical nonsingular Cayley fibration to two cases: either the fiber is homeomorphic to $E(2)\cong K3$ and the base to $\#^4\overline{\mathbb{CP}}^2$, or the fiber is homeomorphic to $E(4)\cong K3\#K3\#(S^2\times S^2)$ and the base to $\#^3\overline{\mathbb{CP}}^2$. The two cases are ruled out by gauge-theoretic arguments implying the following spinnability criterion for four dimensional families of manifolds of homeomorphism type $E(2)$ and $E(4)$.

\begin{manualtheorem}{B}
\label{mainthm: X non spin}
Let $n\in\{1,2\}$ and suppose that $\pi\colon X\to B$ is a smooth fiber bundle that satisfies the following conditions:\vspace{-0.3cm}
\begin{itemize}
    \item The base $B$ is a closed, simply-connected smooth 4-manifold;
    \item The fiber $F$ is a smooth manifold homeomorphic to the elliptic surface $E(2n)$.
\end{itemize}\vspace{-0.3cm}
Let $V\pi\coloneqq \ker(T\pi: TX\to TB)$ be the vertical vector bundle. It holds that 
\begin{equation*}
    w_{2}(V\pi)^n=0\in H^{2n}(X,\mathbb{Z}_2).
\end{equation*}
\end{manualtheorem}

The case $n=1$ is a straightforward consequence of a result proved by Baraglia--Konno \cite[Corollary 1.3]{Baraglia_Konno22}. See also Kronheimer--Mrowka \cite[Proposition 2.1]{KronheimerMrowka2020} for the explicit statement over $S^2$ for the diffeomorphism class of $K3$. The result for an arbitrary simply-connected base follows by a standard pullback argument. 

The case $n=2$ requires an extension of these ideas. We argue by contradiction and assume that a counterexample $\pi:X\to B$ exists. We consider a finite-dimensional approximation of the family Seiberg--Witten monopole map
\begin{equation*}
    \mathsf{SW}^{\mathrm{app}}_B\colon
    W^+\oplus V^+
    \longrightarrow
    W^-\oplus V^-.
\end{equation*}
Here $V^\pm$ are trivial real vector bundles over $B$ satisfying
\begin{equation*}
    \operatorname{rank}(V^-)-\operatorname{rank}(V^+) = b^+(E(4)) = 7,
\end{equation*}
while $W^\pm$ are complex vector bundles whose virtual difference $[W^+]-[W^-]\in K(B)$ is the index of the family Dirac operator. Compactness of the Seiberg--Witten equations then gives a pointed map between the fiberwise one-point compactifications,
\begin{equation*}
    \mathsf{SW}^{\mathrm{app}}_{B,+}
    \colon
    S_B^{W^+\oplus V^+}
    \longrightarrow
    S_B^{W^-\oplus V^-}.
\end{equation*}

The key point is to retain the $\mathrm{Pin}(2)$-symmetry of the monopole map and eventually show that the resulting equivariant map cannot exist. A new difficulty arises because, under the assumption $w_2(V\pi)^2\neq 0$, the vertical tangent bundle $V\pi$ does not admit a global spin structure. Consequently, the usual fiberwise $\mathrm{Pin}(2)$-symmetries do not assemble into a global action of a fixed copy of $\mathrm{Pin}(2)$ on the spaces $S_B^{W^\pm\oplus V^\pm}$.

To overcome this, we introduce the notion of a $\mathcal{L}$-twisted family spin structure, where $\mathcal{L}$ is a complex line bundle over $B$ that satisfies \[w_{2}(V\pi)=\pi^*c_{1}(\mathcal{L}) \pmod2.\] The family $\pi:X\to B$ carries such a structure, and the corresponding symmetries assemble into a group bundle $G \to B$ with fiber $\mathrm{Pin}(2)$, giving fiberwise actions
\begin{equation*}
    G \times_B S_B^{W^\pm\oplus V^\pm}
    \longrightarrow
    S_B^{W^\pm\oplus V^\pm}.
\end{equation*}
Thus $\mathsf{SW}^{\mathrm{app}}_{B,+}$ may be regarded as a morphism in $B\backslash \cat{Top}^{G}/B$, the category of pointed spaces over $B$ equipped with these twisted $\mathrm{Pin}(2)$-actions.

We then apply a desuspension argument to reduce to the case $W^-=B\times\{0\}$. Passing to $S^1$-orbit spaces removes the twisting in the residual symmetry and produces an equivariant map between spaces carrying genuine global $C_2$-actions. These $C_2$-actions encode information about the twisting line bundle $\mathcal{L}$ and the class $w_{2}(V\pi)$. Finally, we show using $C_2$-equivariant $K$-theory, together with Atiyah--Hirzebruch spectral sequence calculations, that the required equivariant map cannot exist, yielding the desired contradiction.

We note that several of the constructions and techniques developed here (e.g. twisted family spin structures, the Bauer--Furuta invariant as a parametrized equivariant homotopy class, desuspension techniques, and the reduction to a $C_2$-equivariant problem) apply to general families of spin 4-manifolds. We expect these methods to have further applications to the study of the topology of $\operatorname{BDiff}(F)$ for a spin 4-manifold $F$.

\textbf{Acknowledgments:} The authors thank Thomas Walpuski for insightful discussions and comments on a draft of this article, Johannes Nordström for valuable comments on an earlier version of this article, and David Baraglia and Scotty Tilton for helpful correspondence. J.L. is thankful to Sander Kupers, who mentioned the idea of defining the family Bauer-Furuta invariant as a parametrized stable homotopy class back in 2023. J.L. and J.R. are grateful to the organizers of the Masterclass: Seiberg--Witten Invariants at the University of Copenhagen in June 2026, where discussions following a talk by J.R. initiated the collaboration that led to the present version of this article. J.R. wishes to acknowledge the support of the Deutsche Forschungsgemeinschaft (DFG, German Research Foundation) under Germany’s Excellence Strategy through the Berlin Mathematics Research Center MATH+ (EXC-2046/2, project ID: 390685689). J.L. is partially supported by  National Key R\&D Program of China 2025YFA1017500 and NSFC 12271281. 

\section{Spin(7)-Background}
\label{Spin(7)-background}

Let $(W,g)$ be an eight dimensional Euclidean vector space. The group $\m{Spin}(W,g)\cong \mathrm{Spin}(8)$ has three real eight-dimensional representations: the vector representation on the Euclidean space $(W,g)$, and two chiral spinor representations $\mathbb{S}^+_g$ and $\mathbb{S}^-_g$. Both $\mathbb{S}^+_g$ and $\mathbb{S}^-_g$ can be endowed with $\mathrm{Spin}(8)$-invariant inner products, such that the Clifford multiplication 
\begin{equation*}
    \mathrm{cl}_g\colon W\rightarrow \mathrm{End}(\mathbb{S}^+_g,\mathbb{S}^-_g)\oplus \mathrm{End}(\mathbb{S}^-_g,\mathbb{S}^+_g)
\end{equation*}
is skew-adjoint. Given a unit length positive spinor $\psi_g\in \mathbb{S}^+$, we can identify both 
\begin{equation*}
    \mathrm{cl}_g(\bullet)\psi_g\colon W\cong \mathbb{S}^-_g \und{1.0cm}\m{Stab}(\psi_g)\cong \m{Spin}(7).
\end{equation*}
Since $\m{Stab}(\psi_g)$ fixes the Euclidean structure on $(W,g)$ we deduce that $\m{Spin}(7)\subset \m{SO}(8)$. Using $\psi_g$ we can define the \textbf{Cayley form}
\begin{equation*}
    \Phi\coloneqq \left<\psi_g,\m{cl}^4_g\circ \psi_g\right>\in \wedge^4 W^*.
\end{equation*}
The stabilizer of $\Phi$ under the $\m{GL}(W)$ action on $\wedge^4 W^*$ agrees with $\m{Spin}(7)$ and (up to a sign) we can recover $\psi_g$ and $g$ from the associated Cayley form.

The exterior algebra $\wedge^\bullet W^*$ decomposes into the following $\m{Spin}(7)$-representations 
\begin{align*}
    \wedge^0W^*\cong \wedge^0_1,\hspace{1.0cm}\wedge^1W^*\cong \wedge^1_{\Phi,8}&\,\hspace{1.0cm}\wedge^2W^*\cong \wedge^2_{\Phi,21}\oplus \wedge^2_{\Phi,7},\hspace{1.0cm}\wedge^3W^*\cong \wedge^3_{\Phi,8}\oplus \wedge^3_{\Phi,48}\\
    \und{1.0cm}&\wedge^4W^*\cong \wedge^4_{\Phi,1}\oplus \wedge^4_{\Phi,7}\oplus \wedge^4_{\Phi,27}\oplus \wedge^4_{\Phi,35}
\end{align*}
indexed by their dimension, such that $\wedge^k_{\Phi,\rho}\cong \wedge^{k'}_{\Phi,\rho}$ and moreover $\wedge^kW^*\cong \wedge^{8-k}W^*$.

The Cayley form is an example of a (linear) calibration and satisfies the calibration equality 
\begin{equation*}
    |P|^2=\Phi(P)^2+|\pi_{\psi_g^\perp}(\m{cl}_g(P))\psi_g|^2
\end{equation*}
for all oriented four planes $P$. Such a four plane will be referred to as a \textbf{Cayley plane} if 
\begin{equation*}
    \Phi(P)=1\hspace{1.0cm}\text{or equivalently}\hspace{1.0cm}\m{cl}_g(P)\psi_g=\psi_g.
\end{equation*}

\begin{rem}
\label{cayleyplanestab}
    In particular, a four plane $P$ is Cayley if and only if $P^\perp$ is Cayley and the stabilizer of a Cayley plane decomposition of $W\cong P\oplus P^\perp$ is isomorphic to $\m{Sp}(1)_{\top}\m{Sp}(1)_+\m{Sp}(1)_\perp\cong (\m{Sp}(1)_{\top}\times\m{Sp}(1)_+\times\m{Sp}(1)_\perp)/\mathbb{Z}_2 $. Here, the factor $\m{Sp}(1)_\top\m{Sp}(1)_+$ acts as $\m{SO}(P)$ and $\m{Sp}(1)_+\m{Sp}(1)_\perp$ as $\m{SO}(P^\perp)$.
\end{rem}

Let $X$ be a spinnable Riemannian $8$-manifold. We identify the bundle 
\begin{equation*}
    \m{Cay}_+(X)\coloneqq \m{Fr}_+(X)/\m{Spin}(7)
\end{equation*}
with a (codimension 27) subbundle of four forms $\wedge^4 T^*X$. Any smooth section, referred to as a Cayley form on $X$, yields a  $\m{Spin}(7)\hookrightarrow \m{SO}(8)$-reduction 
\begin{equation*}
    \m{Fr}(X,\Phi)\hookrightarrow \m{Fr}_+(X,g_\Phi)
\end{equation*}
of the normed oriented frame bundle associated to the Riemannian metric $(X,g_\Phi)$, by pulling back the bundle $\m{Fr}_+(X)\rightarrow \m{Cay}_+(X)$. If $\Phi$ is parallel with respect to the Levi-Civita connection $\nabla^{g_\Phi}$, then by the holonomy principle $\m{Hol}(g_\Phi)\subset \m{Spin}(7)$. The holonomy group $\m{Spin}(7)$ appears in the Berger-list \cite{berger} of possible holonomy groups, together with the group $G_2$, the two exceptional classes. Their existence has been proven locally \cite{bryant1987} and later in the compact case \cite{Joyce1996b}. 

\begin{theorem}\cite{Joyce1996b}
    Let $(X,\Phi)$ be a torsion-free $\m{Spin}(7)$-manifold. Then it is full-holonomy $\m{Spin}(7)$, i.e. $\m{Hol}(g_\Phi)=\m{Spin}(7)$ if and only if
    \begin{equation*}
        \pi_1(X)=1\und{1.0cm}\hat{A}(X)=1.
    \end{equation*}
    Both imply that $b^1(X)=b^1_{\Phi,8}(X)=0$ and $b^2_{\Phi,7}(X)=0$. In this case, the $\Phi$-intersection pairing 
    \begin{equation*}
        \left<\bullet,\bullet\right>_\Phi\colon H^2(X;\mathbb{R})^{\otimes 2}\rightarrow \mathbb{R},\,(\alpha\otimes\beta)\mapsto \left<\alpha\cup\beta\cup\Phi\right>
    \end{equation*}
    is negative-definite.
\end{theorem}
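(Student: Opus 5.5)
The plan is to follow Joyce's argument, which rests on three ingredients: the holonomy classification, the Lichnerowicz--Weitzenböck identification of parallel spinors with harmonic spinors, and the representation theory of $\m{Spin}(7)$ on forms.

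\emph{Holonomy and parallel spinors.} Since $\m{Hol}(g_\Phi)\subset\m{Spin}(7)$, the metric is Ricci-flat. If $\pi_1(X)$ were infinite, the Cheeger--Gromoll splitting theorem would show that the universal cover splits off a line, which forces $b^1\neq 0$ on a finite cover. This contradicts the identity $b^1=b^1_{\Phi,8}$ below, combined with the vanishing of harmonic sections of $\wedge^1_{\Phi,8}$ obtained via Bochner. Hence $\pi_1$ is finite and we may pass to the finite universal cover $\tilde X$. Because $g$ is Ricci-flat, the Weitzenböck formula $D^2=\nabla^*\nabla+\tfrac{s}{4}$ with $s=0$ shows that harmonic spinors are parallel. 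Therefore $\hat A(\tilde X)=\operatorname{ind}D^+=\dim\ker D^+-\dim\ker D^-$ is the number of parallel positive spinors minus the number of parallel negative spinors. These numbers are the dimensions of the $\m{Hol}$-fixed subspaces of $\mathbb S^\pm$.

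\emph{Running through the list.} Run through Berger's list of simply connected Ricci-flat holonomy groups contained in $\m{Spin}(7)$: $\m{Spin}(7)$, $\m{SU}(4)$, $\m{Sp}(2)$, $\m{Sp}(1)\times\m{Sp}(1)$, $\m{SU}(2)\times\m{SU}(2)$, and also the products with flat factors such as $G_2\times 1$ and $\m{SU}(3)\times 1$. For these the value of $\hat A$ is respectively $1,2,3,4$, and $0$ for the products with flat factors. In each case this is computed from the fixed spinors. Hence $\m{Hol}=\m{Spin}(7)$ if and only if $\hat A(\tilde X)=1$.

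\emph{Passing back to $X$.} Multiplicativity gives $\hat A(\tilde X)=|\pi_1|\,\hat A(X)$. Since $\hat A(X)$ is a nonnegative integer counting parallel spinors on $X$, the equality $\hat A(X)=1$ forces $|\pi_1|=1$. This gives the direction ``$\pi_1=1$ and $\hat A=1$ imply full holonomy''. The converse follows because full holonomy forces an irreducible, non-flat metric: the finite-cover argument with $\hat A(\tilde X)=1$ and integrality then gives $\pi_1=1$. The main obstacle I expect is this bookkeeping of the finite fundamental group; I would handle it exactly as Joyce does, by counting parallel spinors on $X$ versus $\tilde X$.

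\emph{Betti numbers.} Harmonic forms on a torsion-free $\m{Spin}(7)$-manifold decompose by type, so $b^k=\sum_\rho b^k_{\Phi,\rho}$. By Bochner and Ricci-flatness, harmonic $1$-forms are parallel, so $b^1$ equals the dimension of the $\m{Hol}$-fixed subspace of $\wedge^1$, which is zero. A harmonic form in $\wedge^2_{\Phi,7}$ corresponds, via Clifford multiplication with $\psi$, to a harmonic spinor in the complement $\psi^\perp\subset\mathbb S^+$, and hence to a parallel spinor. When the holonomy is full, the only parallel spinor is $\psi$ itself, so $b^2_{\Phi,7}=0$.

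\emph{Negative definiteness.} For $\alpha\in\wedge^2_{\Phi,21}$ one has the pointwise identity $\alpha\wedge\alpha\wedge\Phi=-|\alpha|^2\,\mathrm{vol}$. This holds because $\wedge^2_{21}$ is the $-1$ eigenspace of $\alpha\mapsto*(\Phi\wedge\alpha)$, with the normalization chosen so that $\wedge^2_7$ is the $+3$ eigenspace. Since every harmonic $2$-form is of type $21$, integrating this identity shows that $Q_\Phi(\alpha,\alpha)=-\|\alpha\|_{L^2}^2<0$ for every nonzero harmonic $\alpha$, which is the required negative definiteness.
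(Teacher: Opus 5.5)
Your overall route is Joyce's argument; the paper gives no proof of this statement and only cites Joyce. The parallel-spinor count, the $b^1$ and $b^2_{\Phi,7}$ arguments, and the pointwise identity $\alpha\wedge\alpha\wedge\Phi=-|\alpha|^2\,\m{vol}$ on $\wedge^2_{\Phi,21}$ are all fine. The problem is the fundamental-group bookkeeping, where two assertions are false as written.

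First, ``hence $\pi_1$ is finite'' does not hold for a general torsion-free $\m{Spin}(7)$-manifold; flat $T^8$ and $K3\times T^4$ are counterexamples. Bochner only makes harmonic $1$-forms parallel. They vanish only when the holonomy fixes no vector, and that is the full-holonomy hypothesis (on the finite cover $X'$ you use $\m{Hol}(X')\supseteq\m{Hol}^0(X)=\m{Spin}(7)$). So this step belongs to the forward direction only. There it is quicker to note that the Cheeger--Gromoll line gives a parallel vector field on $\tilde X$, which contradicts the irreducibility of $\m{Spin}(7)$ on $\mathbb{R}^8$. The reverse direction involves no covering: $\pi_1=1$ is assumed, so $X$ itself is compact and simply connected, flat factors cannot occur, and your case list gives $\m{Hol}=\m{Spin}(7)$ directly from $\hat A(X)=1$.

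Second, the sentence ``the equality $\hat A(X)=1$ forces $|\pi_1|=1$'' is false. If it were true, the hypothesis $\pi_1(X)=1$ in the theorem would be redundant. Here is a counterexample. Let $\sigma$ be complex conjugation on the Fermat sextic fourfold $Y\subset\mathbb{CP}^5$. It is free, since $Y$ has no real points, and antiholomorphic. By Calabi uniqueness it is an isometry of the Calabi--Yau metric with $\sigma^*\omega=-\omega$. After rotating the phase of $\Omega$ so that $\sigma^*\Omega=\overline{\Omega}$, the Cayley form $\tfrac12\omega^2+\m{Re}\,\Omega$ descends to $X=Y/\sigma$. This $X$ has $\pi_1=\mathbb{Z}_2$, holonomy $\m{SU}(4)\rtimes\mathbb{Z}_2$, and $\hat A(X)=\hat A(Y)/2=1$.

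What is true, and what your last sentence of that paragraph actually uses, is that $\hat A(\tilde X)=1$ forces $|\pi_1|=1$. This follows from $|\pi_1|\,\hat A(X)=\hat A(\tilde X)$ and integrality; nonnegativity of $\hat A(X)$ plays no role. In the forward direction, $\hat A(\tilde X)=1$ holds because $\m{Hol}(\tilde X)=\m{Hol}^0(X)$ is the identity component of $\m{Spin}(7)$, which is all of $\m{Spin}(7)$. State this explicitly, since in the example above $\hat A(\tilde X)=\hat A(Y)=2$ even though $\hat A(X)=1$. With these two repairs the proof is complete.

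Minor points:
\begin{itemize}
\item $\m{Sp}(1)\times\m{Sp}(1)$ and $\m{SU}(2)\times\m{SU}(2)$ are the same group, so your list has four entries, matching the values $1,2,3,4$.
\item The $b^2_{\Phi,7}$ step rests on the identity $D(\omega\cdot\psi)=((d+d^*)\omega)\cdot\psi$ for parallel $\psi$, which you should state. Applied to an arbitrary harmonic $2$-form, it shows directly that the $\wedge^2_{\Phi,7}$-component vanishes, so you do not need the general type decomposition of harmonic forms.
\end{itemize}
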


Fernández \cite{Fernandez1986} proved that the property of being parallel with the respect to itself is equivalent to the four form being closed. Consequently, torsion free $\m{Spin}(7)$-structures are examples of calibrated structures in the sense of \cite{Harvey1982}. An embedded four dimensional submanifold $\iota\colon C\hookrightarrow (X,\Phi)$ is called calibrated, or \textbf{Cayley submanifold}, if 
\begin{equation*}
    \iota^*\Phi=\m{vol}_{\iota^*g}\hspace{1.0cm}\text{or equivalently}\hspace{1.0cm}\m{cl}_{g}(\m{vol}_{\iota^*g})\iota^*\psi=\iota^*\psi.
\end{equation*}
Moreover, the calibration property implies that Cayley submanifolds are always volume minimizing in their respective homology classes. The deformation theory of Cayley submanifolds is elliptic \cite[Section 6]{mclean1998Deformations} and the virtual dimension of their moduli space equals $v\dim(\mathcal{C}ay(C,X,\Phi))=\frac{\sigma(C)+\chi(C)}{2}-\iota_*[C]\cdot \iota_*[C]$.

\begin{definition}
    Let $(X,\Phi)$ be a torsion-free $\m{Spin}(7)$-manifold, assume that there exists a smooth map $\pi\colon X\rightarrow B$ to a compact four manifold such that the fibers $\iota_b\colon \pi^{-1}(b)\hookrightarrow X$ are all Cayley submanifolds. Then we refer to $(X,\Phi,\pi)$ as a \textbf{Cayley fibration}.
\end{definition}

\section{Restricting the Topological Type of Smooth Cayley Fibrations}
\label{Restricting the Topological Type of Smooth Cayley Fibrations}

Let $\pi\colon (X, \Phi) \to B$ be a smooth Cayley fibration with $\m{Hol}(g_\Phi) = \m{Spin}(7)$. In this section we use the negativity of the $\Phi$-intersection form, characteristic class identities and the multiplicativity of Euler characteristic and signature to show that only two topological possibilities can occur. 

Henceforth, let $F\cong \pi^{-1}(b)$ be the fiber of $\pi$ and $\iota_b\colon F\hookrightarrow X$ the embedding of the fiber over $b\in B$.

Notice that since $\pi_1(X) = \pi_0(X) = 1$ the following holds. Suppose that the base space $B$ is not simply connected. Then if $p\colon  \tilde{B} \to B$ is its universal cover, we have a unique lift of $\pi$ along $p$
\begin{equation*}
\begin{tikzcd}
	& {\tilde{B}} \\
	X & B
	\arrow["p"{description}, from=1-2, to=2-2]
	\arrow["{\tilde{\pi}}"{description}, dashed, from=2-1, to=1-2]
	\arrow["\pi"{description}, from=2-1, to=2-2]
\end{tikzcd}
\end{equation*}
which follows from the lifting property for the covering maps \cite[Proposition 3.5]{May99}. This is justified since we trivially have $\pi_*(\pi_1(X)) = 1$. The smoothness of $\tilde{\pi}$ is immediate, i.e. after replacing $\pi$ with $\tilde{\pi}$ it is harmless to assume that the base is simply connected (and thus the fiber is connected as well).

\begin{rem}
\label{samevolume}
    Being a fibration with calibrated fibres implies that all fibers are minimal and have the same volume. (They represent the same homology class.)
\end{rem}

\begin{rem}
        Let $\pi\colon (X,\Phi)\rightarrow B$ be a Cayley fibration. The Riemannian metric associated to $\Phi$ induces an Ehresmann connection 
        \begin{equation*}
            TX\cong H\oplus V\pi.
        \end{equation*}
        Using this splitting, the $\m{Spin}(7)$-frame bundle admits a $\mathrm{Sp}(1)_\top\mathrm{Sp}(1)_+\mathrm{Sp}(1)_\perp$-reduction 
    \begin{equation*}
        Q\hookrightarrow \m{Fr}(X,\Phi).
    \end{equation*}
    to frames preserving the splitting $TX\cong H\oplus V\pi$. Its torsion includes both the second fundamental form of the fibers $\m{II}_g\in \Gamma(X,\m{Sym}^2(V^*\pi,H))$ as well as the curvature $F_H\in \Gamma(X,\wedge^2 H^*\otimes V\pi)$ of the Ehresmann connection.
    
    Using the reduction $Q\hookrightarrow\m{Fr}(X,\Phi)$ we can identify the bundles
    \begin{align*}
        V^*\pi\cong& Q\times_{\mathrm{Sp}(1)_\top\mathrm{Sp}(1)_+\mathrm{Sp}(1)_\perp}V_{1,1,0},\\
        H^*\cong& Q\times_{\mathrm{Sp}(1)_\top\mathrm{Sp}(1)_+\mathrm{Sp}(1)_\perp}V_{0,1,1}\cong \pi^*T^*B \und{1.0cm}\\
        \wedge^2_+V^*\pi\cong& Q\times_{\mathrm{Sp}(1)_\top\mathrm{Sp}(1)_+\mathrm{Sp}(1)_\perp}V_{0,2,0}\cong  \wedge^2_+H^*.
    \end{align*}
Restricting these bundles to the fibers shows that $\wedge^2_+ V^*\pi_b\cong \wedge^2_+T^*F\cong \wedge^2_+H_b^*\cong \underline{\mathbb{R}}^3_{F}$ by the triviality of the horizontal bundle along the fibers.
\end{rem}

These remarks lead to the following conclusion. Recall that \cite[Proposition 10.6.6]{joyce2000compact} implies that for any $\sigma \in H^2(X;\mathbb{R})$ we have
\begin{equation*}
    \left<\sigma,\sigma\right>_\Phi=\int_X \sigma \wedge\sigma\wedge\Phi \leq 0.
\end{equation*}
Now, for any $\omega\in H^2(B;\mathbb{R})$, fiber integration and Remark \ref{samevolume} give
\begin{equation*}
    \left< \pi^*\omega,\pi^*\omega\right>_\Phi= \int_X \pi^*\omega\wedge\pi^*\omega\wedge\Phi= \int_B \omega\wedge\omega\wedge\left(\int_{X/B}\Phi\right) = \m{vol}(F)\int_B\omega\wedge\omega
     = \m{vol}(F)\,Q_B(\omega,\omega).
\end{equation*}
Since the $\Phi$-intersection form on $H^2(X;\mathbb{R})$ is negative-definite, it follows that $Q_B(\omega,\omega)\leq 0$ for every $\omega\in H^2(B;\mathbb{R})$. Thus, the intersection form of $B$ is negative-semidefinite.  On the other hand, $B$ is a closed oriented four-manifold, so its intersection form is nondegenerate by Poincar\'e
duality. Consequently, the intersection form of $B$ is negative-definite.

We next show that the pullback map
\begin{equation*}
    \pi^*\colon H^2(B;\mathbb{R})\longrightarrow H^2(X;\mathbb{R})
\end{equation*}
is injective. Indeed, if $\pi^*\omega=0$, then the computation above gives
\begin{equation*}
    0
    = \left< \pi^*\omega,\pi^*\omega\right>_\Phi
    = \m{vol}(F)\,Q_B(\omega,\omega).
\end{equation*}
Since $\m{vol}(F)>0$ and $Q_B$ is negative-definite, we conclude that $\omega=0$.

Finally, since $B$ is simply-connected, the local coefficient system in the Leray--Serre spectral sequence is trivial. Its low-degree exact
sequence contains
\begin{equation*}
    0=H^1(X;\mathbb{R})
    \longrightarrow H^1(F;\mathbb{R})
    \xrightarrow{d_2} H^2(B;\mathbb{R})
    \xrightarrow{\pi^*} H^2(X;\mathbb{R}).
\end{equation*}
Exactness and the injectivity of $\pi^*$ imply that $d_2=0$, while exactness at $H^1(F;\mathbb{R})$ implies that $d_2$ is injective. Therefore $H^1(F;\mathbb{R})=0$.

To summarize the discussion, we have proved the following.

\begin{prop}\label{topology of the fiber}
    The intersection form of the base space $B$ is negative-definite and $b^1(F) = 0$.
\end{prop}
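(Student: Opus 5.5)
The plan is to assemble the argument already sketched above into two steps. First I prove negative-definiteness of $Q_B$, then $b^1(F)=0$. As a preliminary, I use the lifting argument to replace $\pi$ by $\tilde{\pi}\colon X\to\tilde{B}$. This makes $B$ simply connected and $F$ connected; compactness of $\tilde B$ follows since $\tilde\pi$ is a proper submersion onto a connected manifold, hence surjective with compact image.

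\textbf{Negative-definiteness of $Q_B$.} The key input is the fiber-integration identity
\[
\textstyle\int_{X/B}\Phi=\m{vol}(F)
\]
as a constant function on $B$. This holds because each fiber is Cayley, so $\iota_b^*\Phi=\m{vol}_{\iota_b^*g}$, and by Remark \ref{samevolume} all fibers have equal volume. One should orient the fibers compatibly with $X$ and $B$ so that this integral is positive; the Cayley condition fixes the fiber orientation and the base orientation is then induced. Combining this with the projection formula gives
\[
\left<\pi^*\omega,\pi^*\omega\right>_\Phi=\m{vol}(F)\,Q_B(\omega,\omega)
\]
for all $\omega\in H^2(B;\mathbb{R})$. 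Joyce's negativity of the $\Phi$-pairing then makes $Q_B$ negative semidefinite. Poincaré duality on the closed oriented $B$ makes $Q_B$ nondegenerate, and a nondegenerate semidefinite form is definite.

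\textbf{The bound $b^1(F)=0$.} The same identity shows that $\pi^*\omega=0$ forces $Q_B(\omega,\omega)=0$, hence $\omega=0$, so $\pi^*$ is injective on $H^2$. I then use the five-term exact sequence of the Leray--Serre spectral sequence with real coefficients. Since $\pi_1(B)=1$, the coefficients are untwisted. We have $E_2^{1,0}=H^1(B)=0$, and $b^1(X)=0$ by the full-holonomy theorem. The sequence $0=H^1(X)\to H^1(F)\xrightarrow{d_2}H^2(B)\xrightarrow{\pi^*}H^2(X)$ is exact. Exactness gives that $d_2$ is injective, and injectivity of $\pi^*$ gives $\operatorname{im}d_2=\ker\pi^*=0$. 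Therefore $H^1(F;\mathbb{R})=0$.

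The main point requiring care is the orientation and sign convention in the fiber integral. Negative-definiteness hinges on $\int_{X/B}\Phi$ being a positive constant, and this uses that $\Phi$ restricts to exactly the volume form on each Cayley fiber. Everything else is formal.
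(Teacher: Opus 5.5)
Your proposal is correct and follows the paper's proof essentially step for step. The identity $\langle\pi^*\omega,\pi^*\omega\rangle_\Phi=\mathrm{vol}(F)\,Q_B(\omega,\omega)$, combined with negativity of the $\Phi$-pairing and Poincar\'e duality, gives definiteness, and the resulting injectivity of $\pi^*$ on $H^2(B;\mathbb{R})$ in the Leray--Serre five-term sequence with $H^1(X;\mathbb{R})=0$ kills $H^1(F;\mathbb{R})$. Your added remarks on the compactness of $\tilde B$ and on orienting $B$ compatibly with $X$ and the Cayley fibers make explicit points the paper leaves implicit.
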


From the fibration structure it follows that $N\iota_b \cong \underline{\mathbb{R}}^4$ so that the bundle $\wedge^2_{+,b}N^*\iota_b \cong \wedge^2_{+}T^*F_b$ is trivial \footnote{See \cite[Remark 9.5]{salamon2010notes} for the proof that there is such a canonical identification induced by $\Phi$}. This means \cite[Exercise 10.1.3]{GompfStipsicz} that $2\chi(F) + 3 \sigma(F) = 0$ and $w_2(F) = 0$; in particular $2(2+b^2_+(F)+b^2_-(F)) + 3 (b^2_+(F)-b^2_-(F))= 0$, so that 
\begin{equation*}
    b^2_-(F) = 4 + 5 b^2_+(F) \quad \textnormal{and} \quad \sigma(F)= -4(1+ b^2_+(F) ).
\end{equation*}
Now, since the Euler characteristic of a fibration is multiplicative, it follows that
\begin{equation}\label{chi computation}
    \chi(X) = \chi(B)\chi(F) = (2+b^2_-(B))\cdot 6(1+b_+^2(F)).
\end{equation}
Furthermore, a theorem of Chern-Hirzebruch-Serre \cite{chernhirzebruchserre57} states that the signature of a fiber bundle with a simply-connected base has a multiplicative signature, so that
\begin{equation}\label{sigma computation}
    \sigma(X) = \sigma(B)\sigma(F) = 4 b^2_-(B) \cdot (b^2_+(F) + 1).
\end{equation}
Now, \cite[pp. 259]{joyce2000compact} gives
\begin{equation*}
    24\hat{A}(X) = -1 + b^1(X)-b^2(X)+b^3(X)+b^4_+(X)-2b^4_-(X) 
\end{equation*}
and by Poincaré duality
\begin{equation*}
    \chi(X) = 2-2b^1(X)+2b^2(X)-2b^3(X)+b_4^+ (X) + b_4^-(X)
\end{equation*}
so that
\begin{equation}
    \chi(X)+48 \hat{A}(X) = 3(b^+_4(X) - b^-_4(X)) = 3 \sigma(X).
\end{equation}
We note for completeness that this equality already appears in \cite[Equation (2)]{crowleynordstrom2015new}. Since for a closed torsion-free $\mathrm{Spin}(7)$ manifold it holds that $\hat{A}(X) = 1$, so that substituting in \eqref{chi computation} and \eqref{sigma computation}, we get 
\begin{equation}
\label{A-hat-chi-sigma relation}
    6\cdot \Big(2+b^2_-(B)\Big) \cdot \Big(1+b_+^2(F)\Big)+48 =12 \cdot b^2_-(B) \cdot \Big( 1 + b^2_+(F) \Big).
\end{equation}
Rearranging this formula yields
\begin{equation}
\label{Diophantic}
    \Big(b^2_-(B) - 2\Big) \cdot \Big(b^2_+(F)  + 1 \Big) = 8.
\end{equation}
This narrows down the possibilities to 
\begin{equation}\label{topological types}
    (b^2_-(B), b^2_+(F)) \in \{ (4, 3), (3, 7)\}.
\end{equation}

Let us remark that $b^2_+(F)=0$ and $b^2_+(F) = 1$ are not valid possibilities since $F$ is spin and in these cases the respective equalities $\sigma(F) = -4$ and $\sigma(F) = -8$ contradict Rokhlin's theorem which states that $16|\sigma(F)$.

Before we consider \eqref{topological types} case-by-case, let us first determine the homeomorphism type of the base and fiber. Recall that Freedman's classification theorem \cite{Freedman82} states that the homeomorphism type of a simply-connected $4$-manifold is determined by its intersection form and its Kirby--Siebenmann invariant\footnote{Recall that the Kirby--Siebenmann invariant vanishes if the manifold admits a smooth structure}. 

First, let us consider the base. We have already shown that $B$ is simply-connected and that its intersection form is negative definite. Since $B$ is smooth, Donaldson's diagonalisation theorem \cite{Donaldson83} implies that $Q_B \cong-I_{b^2_-(B)}$, so we can immediately conclude 
\begin{equation*}
    B \cong_{\cat{Top}} \#^{\,b^2_-(B)} \overline{\mathbb{CP}}^2.
\end{equation*}
Note that in particular, $B$ is not spin. 

Now, we will explain the homeomorphism type of the fiber. Let
\begin{equation*}
\partial\colon\pi_2(B)\longrightarrow \pi_1(F)
\end{equation*}
be the connecting homomorphism in the long exact sequence of homotopy groups associated with the fibration. Since $\pi_1(X)=0$, the map $\partial$ is surjective. Furthermore, the image of the connecting homomorphism of any fibration is contained in the first Gottlieb group $G_1(F)$ \cite[Section 1, Property (7)]{Gottlieb89}. We shall not need the definition of $G_1(F)$, only the following consequence of Gottlieb's work: if a space has the homotopy type of a compact connected polyhedron and nonzero Euler characteristic, then its first Gottlieb group is trivial \cite[Theorem IV.1]{Gottlieb65}. These assumptions apply here, since $F$ is a closed connected manifold and $\chi(F)=6(1+b_2^+(F))>0$. Thus $G_1(F)=0$, and hence
\begin{equation*}
    \pi_1(F)=0.
\end{equation*}
Since the manifold $F$ is simply-connected and spin, the intersection form $Q_F$ is even and unimodular. Moreover, in both cases under consideration the form is indefinite. By the classification of indefinite even unimodular forms \cite[Theorem 1.2.14]{GompfStipsicz}, $Q_F$ is determined by $b^2_+(F)$ and $b^2_-(F)$. Consequently, depending on the case in \eqref{topological types}, it is either $2(-E_8)\oplus 3H $ or $4(-E_8)\oplus 7H$. Once again, by Freedman's classification, these are realized uniquely up to homeomorphism by $K3$ and $K3\# K3 \# (S^2 \times S^2)$, respectively.

Furthermore, computing the Leray-Serre spectral sequence allows us to determine the Betti numbers of $X$ in each of the cases \eqref{topological types}.

Summing up the discussion above, we are left with the following two possibilities. 

\begin{theorem}\label{classification theorem}
    If $\pi\colon (X,\Phi) \to B$ is a smooth Cayley fibration without singular fibers, then one of the following cases must hold true:
\begin{enumerate}[
    label=\textbf{Case \arabic*:},
    leftmargin=*,
    align=left
]
\item
\begin{itemize}[leftmargin=2em]
    \item $(b^2(X), b^3(X), b^4_+(X), b^4_-(X)) = (26, 0, 77, 13)$,
    \item the base $B$ is homeomorphic to $\#^4 \overline{\mathbb{CP}}^2$,
    \item the fiber $F$ is homeomorphic to $K3$.
\end{itemize}

\item
\begin{itemize}[leftmargin=2em]
    \item $(b^2(X), b^3(X), b^4_+(X), b^4_-(X)) = (49, 0, 118, 22)$,
    \item the base $B$ is homeomorphic to $\#^3 \overline{\mathbb{CP}}^2$,
    \item the fiber $F$ is homeomorphic to $K3 \# K3 \# (S^2 \times S^2)$.
\end{itemize}
\end{enumerate}
\end{theorem}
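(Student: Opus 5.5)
The proof will assemble the facts established above and add one Leray--Serre computation that pins down the Betti numbers of $X$. By the lifting argument we may assume $B$ is simply connected. Proposition \ref{topology of the fiber} and the Diophantine equation \eqref{Diophantic} then leave only the two cases \eqref{topological types}: $(b^2_-(B),b^2_+(F))=(4,3)$ or $(3,7)$. Donaldson's theorem together with Freedman's classification gives $B\cong_{\cat{Top}}\#^{b^2_-(B)}\overline{\mathbb{CP}}^2$. The Gottlieb argument gives $\pi_1(F)=0$. The classification of indefinite even unimodular forms (using $w_2(F)=0$), followed by Freedman, identifies $F$ with $K3$ or $K3\#K3\#(S^2\times S^2)$ respectively. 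The relation $b^2_-(F)=4+5b^2_+(F)$ gives $b^2(F)=22$ in Case 1 and $b^2(F)=7+39=46$ in Case 2.

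For the Betti numbers I will run the Leray--Serre spectral sequence with real coefficients. Because $B$ is simply connected, the local system is trivial, so
\begin{equation*}
E_2^{p,q}=H^p(B;\mathbb{R})\otimes H^q(F;\mathbb{R}).
\end{equation*}
Both $B$ and $F$ are simply connected closed $4$-manifolds, so their real cohomology is concentrated in degrees $0,2,4$. Hence $E_2^{p,q}\neq 0$ only when $p$ and $q$ are both even. Every differential $d_r$ raises the total degree by one and therefore maps into a zero group, so the sequence degenerates at $E_2$. Consequently the Betti numbers of $X$ are given by
\begin{equation*}
b^k(X)=\sum_{p+q=k} b^p(B)\,b^q(F).
\end{equation*}
In particular $b^1(X)=b^3(X)=0$ and $b^2(X)=b^2(B)+b^2(F)$. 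Moreover
\begin{equation*}
b^4(X)=2+b^2(B)\,b^2(F).
\end{equation*}
In Case 1 this gives $b^2(X)=4+22=26$ and $b^4(X)=2+4\cdot 22=90$. In Case 2 it gives $b^2(X)=3+46=49$ and $b^4(X)=2+3\cdot 46=140$.

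To split $b^4$ into $b^4_\pm$ I will use the multiplicativity of the signature \eqref{sigma computation}. In Case 1, $\sigma(X)=\sigma(B)\sigma(F)=(-4)(-16)=64$. Solving $b^4_++b^4_-=90$ and $b^4_+-b^4_-=64$ gives $(b^4_+,b^4_-)=(77,13)$. In Case 2, $\sigma(X)=(-3)(-32)=96$. Solving $b^4_++b^4_-=140$ and $b^4_+-b^4_-=96$ gives $(118,22)$.

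As a consistency check, I will verify that these values satisfy \eqref{A-hat-chi-sigma relation} with $\hat A(X)=1$. In Case 1, $\chi(X)=6\cdot 24=144$ and indeed $144+48=192=3\cdot 64$. In Case 2, $\chi(X)=5\cdot 48=240$ and $240+48=288=3\cdot 96$.

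The only step that is not pure bookkeeping is the degeneration of the spectral sequence. This rests on the vanishing of the odd cohomology of both $B$ and $F$, which in turn rests on the simple connectivity of both that was established earlier. With that in hand, the remaining computations are routine.
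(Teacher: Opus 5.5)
Your proposal is correct and follows the paper's route: reduce to the two cases via the Diophantine equation and Rokhlin, identify $B$ by Donaldson and Freedman, identify $F$ by the Gottlieb argument, the even-form classification and Freedman, and then read off the Betti numbers of $X$ from the Leray--Serre spectral sequence. You make explicit what the paper leaves implicit, namely the parity argument for degeneration at $E_2$ and the use of signature multiplicativity to split $b^4(X)$ into $b^4_\pm(X)$; your numbers $(26,0,77,13)$ and $(49,0,118,22)$ check out.
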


\section{Proof of Theorem \ref{main theorem}}
\label{Proof of Theorem A}

We now combine the topological classification in Theorem
\ref{classification theorem} with Theorem \ref{mainthm: X non spin} to prove Theorem \ref{main theorem}. We first record the cohomological fact that will allow us to detect characteristic classes pulled back from the
base.

\begin{lemma}
    \label{lem:pullback-injective-degrees-two-and-four}
    Let $\pi\colon X\rightarrow B$ be a fiber bundle whose base and fiber are closed, simply-connected $4$-manifolds. Then the pullback maps
    \begin{equation*}
        \pi^*\colon H^k(B;\mathbb{Z}_2)
        \rightarrow H^k(X;\mathbb{Z}_2)
    \end{equation*}
    are injective for $k=2$ and $k=4$.
\end{lemma}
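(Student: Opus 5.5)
We will use the Serre spectral sequence of $\pi$ with $\mathbb{Z}_2$-coefficients. Since $B$ is simply connected, the local coefficient system $\mathcal{H}^q(F;\mathbb{Z}_2)$ is trivial, so
\begin{equation*}
E_2^{p,q}=H^p(B;H^q(F;\mathbb{Z}_2))\cong H^p(B;\mathbb{Z}_2)\otimes H^q(F;\mathbb{Z}_2).
\end{equation*}
Since $F$ is a closed simply-connected $4$-manifold, $H^1(F;\mathbb{Z}_2)=0$, and by Poincaré duality $H^3(F;\mathbb{Z}_2)=0$. The same holds for $B$. Thus $E_2^{p,q}$ vanishes unless $p,q\in\{0,2,4\}$. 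The pullback $\pi^*\colon H^k(B;\mathbb{Z}_2)\to H^k(X;\mathbb{Z}_2)$ is the edge homomorphism $E_2^{k,0}\twoheadrightarrow E_\infty^{k,0}\hookrightarrow H^k(X;\mathbb{Z}_2)$. It therefore suffices to show that no nonzero differential $d_r$ hits the bottom row $E_r^{k,0}$ for $k=2,4$.

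A differential $d_r\colon E_r^{k-r,r-1}\to E_r^{k,0}$ can only be nonzero if $E_2^{k-r,r-1}\neq 0$. This requires $r-1\in\{0,2,4\}$ and $k-r\in\{0,2,4\}$ with $r\ge 2$. For $k=2$, the only candidate is $r=2$, whose source is $E_2^{0,1}=0$. For $k=4$, the candidates are $r=2$ with source $E_2^{2,1}=0$, $r=3$ with source $E_2^{1,2}=0$ (since $H^1(B;\mathbb{Z}_2)=0$), and $r=4$ with source $E_2^{0,3}=0$. Larger $r$ give negative $p$. In every case the source vanishes, because all odd rows and columns of $E_2$ are zero. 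Hence $E_2^{k,0}=E_\infty^{k,0}$ for $k=2,4$, and the edge map is injective.

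There is no real obstacle here. The only points needing care are the triviality of the local system, which holds because $\pi_1(B)=1$, and the vanishing $H^1=H^3=0$ with $\mathbb{Z}_2$-coefficients. The latter follows from $H_1=0$ together with the universal coefficient theorem, since $H_1(F;\mathbb{Z})=0$ gives $H^1(F;\mathbb{Z}_2)=0$, and then Poincaré duality gives $H^3(F;\mathbb{Z}_2)\cong H_1(F;\mathbb{Z}_2)=0$. The argument is in fact a parity argument: the spectral sequence is concentrated in even bidegrees, so all differentials vanish.
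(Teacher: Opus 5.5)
Your proposal is correct and follows the paper's proof essentially verbatim. Both use the Leray--Serre spectral sequence with trivial $\mathbb{Z}_2$ local coefficients and identify $\pi^*$ with the edge homomorphism. Both then check that the only possible sources $E_2^{0,1}$, $E_2^{2,1}$, $E_3^{1,2}$, $E_4^{0,3}$ of differentials into $E^{2,0}$ and $E^{4,0}$ vanish because $H^1$ and $H^3$ of $B$ and $F$ vanish mod $2$.
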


\begin{proof}
    Consider the cohomological Leray--Serre spectral sequence with $\mathbb{Z}_2$-coefficients. Since $B$ is simply-connected, all local coefficient systems are trivial, and therefore
    \begin{equation*}
        E_2^{p,q}
        \cong
        H^p(B;\mathbb{Z}_2)\otimes H^q(F;\mathbb{Z}_2).
    \end{equation*}
    The edge homomorphism associated with the bottom row is the pullback $\pi^*$. No differential leaves this row. The only differential that could enter bidegree $(2,0)$ is
    \begin{equation*}
        d_2\colon E_2^{0,1}\rightarrow E_2^{2,0},
    \end{equation*}
    whose source vanishes because $F$ is simply-connected.

    The only possible differentials entering bidegree $(4,0)$ have sources
    \begin{equation*}
        E_2^{2,1},
        \qquad
        E_3^{1,2}
        \und{1.0cm}
        E_4^{0,3}.
    \end{equation*}
    These groups vanish because, respectively,
    \begin{equation*}
        H^1(F;\mathbb{Z}_2)=0,
        \qquad
        H^1(B;\mathbb{Z}_2)=0
        \und{1.0cm}
        H^3(F;\mathbb{Z}_2)=0,
    \end{equation*}
    where the last equality follows from Poincar\'e duality. It follows that
    \begin{equation*}
        E_\infty^{k,0}=E_2^{k,0}=H^k(B;\mathbb{Z}_2)
        \qquad\text{for }k\in\{2,4\}.
    \end{equation*}
    In total degree $k$, the term $E_\infty^{k,0}$ identifies with the filtration subgroup $F^kH^k(X;\mathbb{Z}_2)$. Under this identification, its inclusion into $H^k(X;\mathbb{Z}_2)$ is the edge homomorphism $\pi^*$. Hence $\pi^*$ is injective in degrees $2$ and $4$.
\end{proof}

\begin{proof}[Proof of Theorem \ref{main theorem}]
    Suppose, for a contradiction, that a closed full-holonomy $\m{Spin}(7)$-manifold $(X,\Phi)$ admits a smooth Cayley fibration
    \begin{equation*}
        \pi\colon (X,\Phi)\rightarrow B.
    \end{equation*}
    The $\m{Spin}(7)$-structure makes $X$ spin. The Cayley calibration orients the fibers, and hence the vertical tangent bundle $V\pi$. Together with
    the orientation of $X$, this also orients the horizontal bundle. An Ehresmann connection gives a splitting
    \begin{equation*}
        TX\cong V\pi\oplus\pi^*TB.
    \end{equation*}
    Since both summands are oriented, the degree-two part of the Whitney product formula gives
    \begin{align}
        \label{eq:vertical-w2-pullback}
        0=w_2(TX)=w_2(V\pi)+\pi^*w_2(TB),
    \end{align}
    and hence
    \begin{equation*}
        w_2(V\pi)=\pi^*w_2(TB).
    \end{equation*}

    In Case $1$ of Theorem \ref{classification theorem}, the fiber is homeomorphic to $K3=E(2)$. Theorem \ref{mainthm: X non spin} with $n=1$
    implies that $w_2(V\pi)=0$. Equation \eqref{eq:vertical-w2-pullback} and Lemma
    \ref{lem:pullback-injective-degrees-two-and-four} then imply
    \begin{equation*}
        w_2(TB)=0.
    \end{equation*}
    On the other hand, $B$ is homeomorphic to
    $\#^4\overline{\mathbb{CP}}^2$, so its intersection form is $-I_4$ and is therefore odd. The Wu formula then implies that $w_2(TB)\neq 0$, which is
    a contradiction.\\

    In Case $2$, the fiber is homeomorphic to
    \begin{equation*}
        K3\mathbin{\#}K3\mathbin{\#}(S^2\times S^2),
    \end{equation*}
    and hence to $E(4)$. Applying Theorem
    \ref{mainthm: X non spin} with $n=2$ and using
    \eqref{eq:vertical-w2-pullback}, we obtain
    \begin{equation*}
        0=w_2(V\pi)^2=\pi^*\bigl(w_2(TB)^2\bigr).
    \end{equation*}
    Lemma \ref{lem:pullback-injective-degrees-two-and-four} therefore gives $w_2(TB)^2=0$. On the other hand, the mod-$2$ reduction of the first Pontryagin class satisfies
    \begin{equation*}
        p_1(TB)\equiv w_2(TB)^2\pmod{2}.
    \end{equation*}
    Since $B$ has negative-definite intersection form of rank $3$, one has $\sigma(B)=-3$. The Hirzebruch signature theorem consequently gives
    \begin{align*}
        \left\langle w_2(TB)^2,[B]\right\rangle
        &\equiv
        \left\langle p_1(TB),[B]\right\rangle
        \pmod{2}
        \\
        &=3\sigma(B)
        \pmod{2}
        \\
        &\equiv 1
        \pmod{2}.
    \end{align*}
    Thus $w_2(TB)^2\neq 0$, which is again a contradiction.\\

    Both alternatives in Theorem \ref{classification theorem} are impossible. Hence a closed full-holonomy $\m{Spin}(7)$-manifold cannot admit a smooth Cayley fibration without singular fibers.
\end{proof}

\section{Proof of Theorem \ref{mainthm: X non spin}}
\label{Proof of Theorem B}

The proof of Theorem \ref{mainthm: X non spin} uses the families Bauer--Furuta invariant. The case $n=1$ follows directly from a result of Baraglia, whereas the case $n=2$ requires a family version of the $\m{Pin}(2)$-equivariant monopole map. Starting from a twisted family spin structure, we construct a finite-dimensional approximation of this map and use a desuspension argument to obtain a $C_2$-equivariant null-homotopy. We then show, using equivariant $K$-theory, that this null-homotopy cannot exist.

\subsection{The case $n=1$}
As we mentioned earlier, the case $n=1$ is a straightforward consequence of the following proposition of Baraglia. 

\begin{prop}{\cite[Proposition 7.6]{Baraglia23a}} \label{Baraglia proposition}
    Let $\pi\colon  X \to B$ be a smooth fiber bundle with a fiber homeomorphic to a $K3$ surface. Then $w_2(V\pi) = \pi^*(w_2(H^+))$, where $H^+ \to B$ denotes the bundle whose fiber over $b$ is a maximal positive definite subspace of $H^2(E_b;\mathbb{R})$.
\end{prop}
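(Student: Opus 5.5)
Since this is \cite[Proposition 7.6]{Baraglia23a}, I only outline how I would prove it; the main difficulty is the equivariant obstruction in the last step. I would first show that $w_2(V\pi)$ is pulled back from the base. On each fiber, $w_2(V\pi)$ restricts to $w_2(TF)=0$, since a $K3$ surface is spin. I then run the Leray--Serre argument of Lemma \ref{lem:pullback-injective-degrees-two-and-four} with $\mathbb{Z}_2$-coefficients over a simply-connected base. Because $H^1(F;\mathbb{Z}_2)=0$, the low-degree sequence reads
\begin{equation*}
0\rightarrow H^2(B;\mathbb{Z}_2)\xrightarrow{\pi^*} H^2(X;\mathbb{Z}_2)\rightarrow H^2(F;\mathbb{Z}_2).
\end{equation*}
Hence $w_2(V\pi)=\pi^*c$ for a unique $c\in H^2(B;\mathbb{Z}_2)$, and the claim becomes $c=w_2(H^+)$. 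For a general base, I would first pass to the cover of $B$ on which the monodromy is trivial, or handle the local system directly; I expect this step to be routine. Both $c$ and $w_2(H^+)$ are natural under pullback, and mod-$2$ classes in $H^2$ are detected by maps of closed (possibly non-orientable) surfaces $\Sigma\to B$. So it suffices to prove $c=w_2(H^+)$ over a closed surface, which I may then take to be $S^2$ or $\mathbb{RP}^2$-like pieces after cutting.

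Over such a base, I would argue by contradiction, assuming $c+w_2(H^+)\neq 0$. Following the strategy described in the introduction, choose a complex line bundle $\mathcal{L}$ with $c_1(\mathcal{L})\equiv c \pmod 2$. The family then carries an $\mathcal{L}$-twisted family spin structure with symmetry group bundle $G\to B$ whose fiber is $\mathrm{Pin}(2)$. Finite-dimensional approximation of the families monopole map gives a fiberwise $G$-equivariant pointed map
\begin{equation*}
S_B^{D\oplus V^+}\longrightarrow S_B^{V^-\oplus H^+},
\end{equation*}
where $V^\pm$ are trivial. Here $D$ is the twisted index bundle, of quaternionic rank $-\sigma(K3)/16=1$, twisted by $\mathcal{L}$. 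On $S^1$-fixed points this map is the inclusion $S^{V^+}\to S^{V^-\oplus H^+}$.

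Next I divide by $S^1$. As in the introduction, this turns the residual $j$-action into a genuine $C_2$-action on the quotient of the unit sphere of $D$, which is a bundle of $\mathbb{RP}^3$'s twisted via $\mathcal{L}$. The $C_2$-equivariant map must then avoid the fixed locus of the target away from the zero section. This yields a section of a $C_2$-Borel bundle built from $H^+$ over the quotient of $S(D)$.

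The main obstacle is the final computation: showing that such a nonvanishing section forces $w_2(H^+)=c$. I would attempt it with Borel (mod-$2$) cohomology, as a family Borsuk--Ulam argument. The key point is that the $C_2$-equivariant Euler class of $H^+\otimes\mathbb{R}_{-}$, namely $\sum_i w_i(H^+)u^{3-i}$, must lie in the kernel of pullback to the sphere-bundle quotient. That kernel is the ideal generated by the Euler class of the rank-$4$ real bundle underlying $D$. Comparing coefficients in low degree, I would aim to identify the degree-two term with $w_2$ of the real bundle $D/S^1$, which should be $c=c_1(\mathcal{L}) \bmod 2$. This would give $w_2(H^+)=c$, contradicting the assumption. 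If mod-$2$ Borel cohomology is not sharp enough, I would switch to $C_2$-equivariant $K$-theory, as the paper does for $E(4)$.
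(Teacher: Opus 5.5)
The paper gives no proof of this proposition; it imports it from Baraglia. Your outline is therefore an independent route. Its architecture is sound: the paper's $\mathcal L$-twisted construction, specialised to K3, followed by a mod-$2$ Borel computation. As written, however, the decisive step is wrong, and the identification that carries the content is only asserted.

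\textbf{The geometry of $S(D)/\underline{S}^1_B$ and the kernel are misidentified.} This quotient is not a bundle of $\mathbb{RP}^3$'s. Since $D$ has quaternionic rank one, it is $\mathbb P_{\mathbb C}(D)$. This is the unit sphere bundle of the oriented rank-$3$ bundle $\mathcal V=\mathfrak{su}(D)$, with $C_2$ acting antipodally, as in Lemma \ref{lem: P(W) bundle structure} with $P_{\mathbb H}=B$. By the Gysin sequence, the kernel of $H^*_{C_2}(B)=H^*(B;\mathbb Z_2)[u]\to H^*_{C_2}(S(\mathcal V))$ is generated by
$$e_{C_2}(\mathcal V\otimes\sigma)=u^3+w_2(\mathcal V)u+w_3(\mathcal V).$$
It is not generated by an Euler class of the real rank-$4$ bundle underlying $D$. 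That bundle carries no $C_2$-action, since $j^2=-1$. Worse, a degree-$4$ generator would force $e_{C_2}(H^+\otimes\sigma)=0$ in $H^*(B)[u]$. This is false, because its $u^3$-coefficient is $1$; your step would then exclude every K3 family, including the product family.

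\textbf{The key identification is missing.} With the correct kernel, comparing in degree $3$ gives $e_{C_2}(H^+\otimes\sigma)=e_{C_2}(\mathcal V\otimes\sigma)$. Hence $w_1(H^+)=0$ and $w_2(H^+)=w_2(\mathcal V)$, so mod-$2$ Borel cohomology is already sharp. What you leave as ``should be'' is the real point, namely $w_2(\mathcal V)=c$. It can be proved as follows.
\begin{itemize}
\item The $\mathcal L$-summand of $\mathbb H_{\mathcal L}=\underline{\mathbb C}\oplus\mathcal L$ acts on $D$ complex-linearly in $\mathcal L$ and antilinearly in $D$.
\item This gives an isomorphism $\mathcal L\otimes_{\mathbb C}\overline D\cong D$ and an $\mathcal L$-valued complex symplectic form on the rank-$2$ bundle $D$.
\item Hence $D$ has structure group $U(2)=(\mathrm{Sp}(1)\times S^1)/\pm1$ with $\det_{\mathbb C}D\cong\mathcal L$.
\item Therefore $w_2(\mathfrak{su}(D))\equiv c_1(D)\equiv c_1(\mathcal L)\pmod 2$. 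Compare the clutching computation in the proof of Lemma \ref{lem: gamma_R}.
\end{itemize}

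\textbf{The Borsuk--Ulam step must be relative.} You cannot desuspend $V^+$ completely: $S^{H^+}$ is only $2$-connected, far outside the range of Lemma \ref{relative Freudenthal}. An equivariant map $S(D)\to S(V^-\oplus H^+)$ only yields $u^m e_{C_2}(H^+\otimes\sigma)=0$ in $H^*(\mathbb P(\mathcal V))$. This is vacuous for $m\ge\dim B$, for degree reasons. Yet large $m$ is exactly what you need to desuspend $W^-$ and obtain your source $S_B^{D\oplus V^+}$, via Lemmas \ref{lem: locally quaternionic injection} and \ref{relative Freudenthal}; this step is unstated in your outline. Use instead the cofibre argument preceding Lemma \ref{lem: sw map implies null homotopic}. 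Pulling back Thom classes gives $\tau_{V^+}\cdot e_{C_2}(H^+\otimes\sigma)|_{S(\mathcal V)}=0$, where $\tau_{V^+}$ is the Thom class. The Thom isomorphism then removes $\tau_{V^+}$.

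\textbf{Smaller points.}
\begin{itemize}
\item No cover is needed. Since $H^1(F;\mathbb Z_2)=0$ and the $H^0$ system is constant, the Leray--Serre argument works over any connected base. Pulling back to a cover would lose information in $H^2(B;\mathbb Z_2)$.
\item Cutting surfaces into $S^2$ and $\mathbb{RP}^2$ pieces is unjustified, since it cannot detect $[T^2]$. It is also unnecessary.
\item The real role of your surface reduction is that $H^3(\Sigma;\mathbb Z)=0$, so $c$ lifts integrally and $\mathcal L$ exists. Over a general base this can fail.
\end{itemize}

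With these repairs, your outline does prove the proposition.
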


\begin{proof}[Proof of Theorem \ref{mainthm: X non spin} for $n=1$]
    The flat cohomology bundle $H^2(X/B;\mathbb{R}) \to B$ is classified by the monodromy representation of $\pi_1(B)$ which is trivial by assumption. Now, this means that the bundle $H^+ \to B$ can be identified with a map $B \to \textnormal{Gr}^+_3(\mathbb{R}^{3, 19})$. But this Grassmannian is contractible \cite[p. 1735-1736]{Baraglia23b}, i.e. such a map is nullhomotopic, and thus $H^+$ is trivial. The case $n=1$ in Theorem \ref{mainthm: X non spin} now follows by Proposition \ref{Baraglia proposition}.
\end{proof}

\subsection{Cohomological reduction for $n=2$}
\label{Cohomological reduction for n=2}
We now turn to the case $n=2$. Throughout the remainder of this section, let $\pi\colon X\to B$ be a smooth bundle whose fiber $F$ is homeomorphic to $E(4)$. We argue by contradiction and assume that
\begin{equation*}
    w_{2}(V\pi)^2\neq 0 \in H^4(X; \mathbb{Z}_2).
\end{equation*}
We will first reduce the topology of the base, then construct a twisted family spin structure and an associated finite-dimensional approximation of the family monopole map. Finally, we will derive a contradiction using $C_2$-equivariant $K$-theory.

Let $i\colon F\hookrightarrow X$ denote the inclusion of a fiber. Then $i^*(w_2(V\pi))=w_{2}(TF)=0$ because $F$ is spin. The Leray--Serre spectral sequence gives an exact sequence
\begin{equation*}
    0\to H^{2}(B;\mathbb{Z}_2)\xrightarrow{\pi^*} H^2(X;\mathbb{Z}_2)\xrightarrow{ i^*}H^2(F;\mathbb{Z}_2).
\end{equation*}
Hence there exists $\overline{a} \in H^2(B; \mathbb{Z}_2)$ such that $w_{2}(V\pi)=\pi^*(\overline{a})$. It follows that $0\neq w_2(V \pi)^2 = \pi^* (\overline{a}^2),$ and therefore $\overline{a}^2 \neq 0$.

Since $B$ is closed and simply connected, $H^3(B; \mathbb{Z}) =0$, and thus $\overline{a}$ admits an integral lift  $a\in H^{2}(B;\mathbb{Z})$. We then have 
\begin{equation*}
    \langle a^2, [B] \rangle = \langle \overline{a}^2, [B]_{\mathbb{Z}_2} \rangle = 1 \pmod 2,
\end{equation*}
so the intersection form of $B$ is odd. By Donaldson's diagonalization theorem and the classification of odd, indefinite symmetric unimodular forms, the intersection form of $B$ is $k\langle1\rangle\oplus l\langle -1\rangle$ for some $k,l$.

By the Whitehead theorem, there is a homotopy equivalence $f\colon (\#^{k}\mathbb{CP}^2)\#(\#^{l}\overline{\mathbb{CP}^2})\to B$ which we can take to be smooth.\footnote{Actually, we can pick $f$ to be homotopic to a homeomorphism by Freedman's theorem \cite{Freedman82}.} Replacing $\pi$ by $f^* \pi$, we may therefore assume that $B = (\#^{k}\mathbb{CP}^2)\#(\#^{l}\overline{\mathbb{CP}^2})$. Let $a_{1},\cdots,a_{k+l}\in H^{2}(B;\mathbb{Z})$ denote the standard basis of $H^2(B;\mathbb{Z})$. After changing the integral lift $a$ by twice an integral class, we may assume that 
\begin{equation*}
    a=\sum_{j=1}^{k+l}s_{j}a_{j} \qquad \text{with } s_{j}\in \{0,1\}
\end{equation*}
with $s_1 \neq 0$.

\subsection{Twisted family spin structures}
\label{Twisted family spin structures}

In the ordinary Bauer--Furuta construction, the monopole map is regarded as a pointed $\m{Pin}(2)$-equivariant map between Hilbert spaces, and its finite-dimensional approximation determines a stable equivariant homotopy class. In the present
situation, however, all the relevant objects vary in families over $B$: the
configuration spaces, the family Dirac operators, their finite-dimensional
approximations, and the resulting representation spheres are bundles over $B$.
We therefore need a parametrized version of the usual pointed equivariant
formalism.

There is a second complication. Although every fiber of $\pi\colon X\rightarrow B$ is spin, the vertical tangent bundle $V\pi$ need not admit a spin structure globally over $X$. The preceding subsection shows that its obstruction is pulled back from the base:
\begin{equation*}
    w_2(V\pi)=\pi^*\overline{a}
\end{equation*}
for an integral class $a\in H^2(B;\mathbb Z)$. Consequently, the spinor bundles
possess their usual quaternionic structures locally over $B$, but these local
structures need not glue to an action of a single fixed copy of $\mathbb H$, or
of $\m{Pin}(2)$. Instead, they glue to bundles of quaternion algebras and
symmetry groups over $B$.

We begin by making the required parametrized language precise.

\begin{definition}
    Let $B\backslash\cat{Top}/B$ denote the category of \textbf{retractive spaces over $B$}. Its objects are diagrams
    \begin{equation*}
        B\xrightarrow{s}E\xrightarrow{p}B
    \end{equation*}
    satisfying $p\circ s=\m{id}_B$. A morphism $f\colon(E,p,s)\longrightarrow(E',p',s')$
    is a continuous map $f\colon E\rightarrow E'$ satisfying $ p'\circ f=p$ and $      f\circ s=s'$. Thus a retractive space over $B$ is precisely a continuously varying family of pointed spaces parametrized by $B$.

    There is a forgetful functor
    \begin{equation*}
        \mathsf{Forget}\colon
        B\backslash\cat{Top}/B\rightarrow
        \cat{Top}/B,\quad
        (E,p,s)\mapsto(E,p).
    \end{equation*}
    Its left adjoint adds a disjoint section. More precisely, if
    $(E,p)\in\cat{Top}/B$, we set
    \begin{equation*}
        E_{+_B}
        \coloneqq
        \bigl(
            E\sqcup B,
            p\sqcup\m{id}_B,
            \iota_B
        \bigr),
    \end{equation*}
    where $\iota_B\colon B\hookrightarrow E\sqcup B$ denotes the inclusion of
    the second summand.
\end{definition}

If $E\rightarrow B$ is a vector bundle, we denote its fiberwise one-point
compactification by $S^E$. The points at infinity define a canonical section,
and hence $S^E$ is naturally an object of $B\backslash\cat{Top}/B$. We continue
to write $S(E)\rightarrow B$ for the ordinary fiberwise sphere bundle.

\begin{definition}
    Let $(E,p_E,s_E)$ and $(F,p_F,s_F)$ be retractive spaces over $B$. Their
    \textbf{fiberwise smash product} is the retractive space
    \begin{equation*}
        E\wedge_BF
        \coloneqq
        \frac{E\times_BF}
        {
            s_E(B)\times_BF
            \,\cup\,
            E\times_Bs_F(B)
        }.
    \end{equation*}
    Fiberwise, this construction satisfies
    \begin{equation*}
        (E\wedge_BF)_b\cong E_b\wedge F_b.
    \end{equation*}
    In particular, for vector bundles $E,F\rightarrow B$, there is a canonical
    identification
    \begin{equation*}
        S^{E\oplus F}\cong S^E\wedge_BS^F.
    \end{equation*}
\end{definition}

We now return to the vertical tangent bundle. Let $a\in H^2(B;\mathbb Z)$ be
the integral lift chosen in the preceding subsection, and fix a Hermitian line
bundle
\begin{equation*}
    \mathcal L\longrightarrow B,
    \qquad
    c_1(\mathcal L)=a.
\end{equation*}
The following definition records that the failure of $V\pi$ to be spin is
entirely induced from the base.

\begin{definition}
    An \textbf{$\mathcal L$-twisted family spin structure} on
    $\pi\colon X\rightarrow B$ consists of a $\m{Spin}^c(4)$-structure
    $\mathfrak s$ on $V\pi$, together with a unitary isomorphism
    \begin{equation*}
        \lambda\colon
        \det(\mathfrak s)
        \xrightarrow{\cong}
        \pi^*\mathcal L.
    \end{equation*}
\end{definition}

The terminology is justified by restricting to a fiber $X_b$. Since
$\pi^*\mathcal L|_{X_b}$ is pulled back from the point $b$, it is trivial on
$X_b$. A choice of unit vector in $\mathcal L_b$ therefore reduces
$\mathfrak s|_{X_b}$ to an ordinary spin structure. Different choices differ
by an element of $S^1$; as $b$ varies, this ambiguity is precisely what
produces the twisted quaternionic structure introduced below.

\begin{prop}
    \label{Existence of an L-twisted family spin structure}
    The vertical tangent bundle $V\pi$ admits an $\mathcal L$-twisted family
    spin structure.
\end{prop}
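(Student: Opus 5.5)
The plan is to build the structure as a spin structure on the twisted bundle $V\pi\oplus\pi^*\mathcal L$ (viewed as a real oriented rank-$6$ bundle). Equivalently, we construct a $\m{Spin}^c(4)$-structure on $V\pi$ whose determinant line is isomorphic to $\pi^*\mathcal L$.

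\textbf{Step 1: existence of a $\m{Spin}^c$-structure with the right determinant.} Recall the standard fact that $\m{Spin}^c$-structures on an oriented real vector bundle $E$ correspond to integral lifts of $w_2(E)$. Given any integral class $c$ with $c\equiv w_2(E)\pmod 2$, there is a $\m{Spin}^c$-structure $\mathfrak s$ with $c_1(\det\mathfrak s)=c$. This is seen by noting that $w_2(E\oplus L_c)=w_2(E)+\overline{c}=0$, where $L_c$ is the complex line bundle with $c_1=c$. A spin structure on $E\oplus L_c$ together with the $\m{U}(1)$-structure on $L_c$ then reduces, via $\m{Spin}(4)\times_{\mathbb Z_2}\m{U}(1)\subset \m{Spin}(6)$, to a $\m{Spin}^c(4)$-structure with determinant $L_c$. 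Apply this with $E=V\pi$ and $c=\pi^*a$. The cohomological reduction in the previous subsection gives $w_2(V\pi)=\pi^*\overline a=\pi^*a \pmod 2$, and $V\pi$ is oriented because the fibers are oriented. So $V\pi\oplus\pi^*\mathcal L$ has vanishing $w_1$ and $w_2$ and admits a spin structure. This yields $\mathfrak s$ with $c_1(\det\mathfrak s)=\pi^*a=c_1(\pi^*\mathcal L)$.

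\textbf{Step 2: the unitary isomorphism $\lambda$.} Hermitian line bundles over $X$ are classified up to unitary isomorphism by $c_1\in H^2(X;\mathbb Z)$, since $B\m U(1)=K(\mathbb Z,2)$. Hence $\det(\mathfrak s)\cong\pi^*\mathcal L$ as Hermitian line bundles, and we fix such an isomorphism $\lambda$. If the construction in Step 1 is done directly via the spin structure on $V\pi\oplus\pi^*\mathcal L$, then $\det\mathfrak s$ is literally $\pi^*\mathcal L$ and $\lambda$ is the identity. In that case no classification argument is needed.

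\textbf{Main obstacle.} The only delicate point is Step 1, specifically the identification of the mod-$2$ class. One must make sure that $w_2(V\pi)$ equals the reduction of $c_1(\pi^*\mathcal L)$ exactly, not merely up to classes invisible on fibers. This is exactly what the exact sequence $0\to H^2(B;\mathbb Z_2)\to H^2(X;\mathbb Z_2)\to H^2(F;\mathbb Z_2)$ and the choice $c_1(\mathcal L)=a$ guarantee, with $a$ an integral lift of $\overline a$. Note also that changing $a$ by twice an integral class does not affect this. The remaining concern is the orientation of $V\pi$. Theorem~B does not assume the fibers are oriented, but $V\pi$ is orientable because $B$ is simply connected and $F$ is simply connected, so $X$ and $F$ are both orientable and $w_1(V\pi)=w_1(TX)-\pi^*w_1(TB)=0$. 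We fix this orientation before carrying out Step 1.
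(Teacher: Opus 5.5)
Your proposal is correct and follows essentially the same route as the paper: both reduce the existence of the required $\m{Spin}^c(4)$-structure to a spin structure on $V\pi\oplus(\pi^*\mathcal L)_{\mathbb R}$, which exists because $w_2(V\pi)=\pi^*\overline a\equiv c_1(\pi^*\mathcal L)\pmod 2$ by the choice $c_1(\mathcal L)=a$. Your additional remarks fill in details the paper leaves implicit: that $V\pi$ is orientable, and that $\lambda$ can be taken tautologically (or obtained from the classification of Hermitian line bundles by $c_1$).
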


\begin{proof}
    An oriented vector bundle $E$ admits a $\m{Spin}^c$-structure with
    determinant line bundle $K$ precisely when
    \begin{align}
        \label{Spinc condition}
        w_2(E)=c_1(K)\pmod 2.
    \end{align}
    Indeed, a $\m{Spin}^c$-structure with determinant line bundle $K$ is
    equivalent to a spin structure on $E\oplus K_{\mathbb R}$. Such a structure
    exists precisely when
    \begin{equation*}
        w_2(E\oplus K_{\mathbb R})=0.
    \end{equation*}
    Since
    \begin{equation*}
        w_2(K_{\mathbb R})=c_1(K)\pmod 2,
    \end{equation*}
    this is equivalent to \eqref{Spinc condition}.

    In the present situation,
    \begin{equation*}
        c_1(\pi^*\mathcal L)
        =
        \pi^*a
        \equiv
        \pi^*\overline a
        =
        w_2(V\pi)
        \pmod 2,
    \end{equation*}
    and hence the required $\m{Spin}^c$-structure exists.
\end{proof}

Proposition~\ref{Existence of an L-twisted family spin structure} supplies a
globally defined $\m{Spin}^c$-structure, but not a preferred family of
reductions to $\m{Spin}(4)$. Locally on $B$, a unitary frame of $\mathcal L$
determines such a reduction and hence equips the spinor bundles with their
usual quaternionic structures. On overlaps, these structures are related by
automorphisms of $\mathbb H$. The following bundle of quaternion algebras
packages this gluing intrinsically.

Write
\begin{equation*}
    \mathbb H=\mathbb C\oplus\mathbb Cj,
    \qquad
    j^2=-1,
    \qquad
    jz=\overline z\,j.
\end{equation*}
Consider the homomorphism
\begin{equation*}
    \vartheta\colon S^1
    \rightarrow
    \m{Aut}_{\mathbb R\text{-}\m{Alg}}(\mathbb H),\quad u+vj\mapsto 
    u+zvj.
\end{equation*}
Equivalently, if $z=e^{i\theta}$, then
\begin{equation*}
    \vartheta(e^{i\theta})(h)
    =
    e^{i\theta/2}he^{-i\theta/2}.
\end{equation*}
This expression does not depend on the choice of the square root
$e^{i\theta/2}$.

Let $P_{\mathcal L}\rightarrow B$ denote the unit-frame bundle of
$\mathcal L$. We define the bundle of quaternion algebras
\begin{equation*}
    \mathbb H_{\mathcal L}
    \coloneqq
    P_{\mathcal L}\times_\vartheta\mathbb H
    \longrightarrow B.
\end{equation*}
By the definition of $\vartheta$, its underlying complex vector bundle is
canonically isomorphic to
\begin{equation*}
    \mathbb H_{\mathcal L}
    \cong
    \underline{\mathbb C}_B\oplus\mathcal L.
\end{equation*}

\begin{definition}
    Let $f\colon M\rightarrow B$ be a smooth map and let $E\rightarrow M$ be
    a complex vector bundle. We call $E$ an
    \textbf{$\mathcal L$-twisted quaternionic bundle} if it is a module bundle
    over $f^*\mathbb H_{\mathcal L}$ and the action of the canonical subbundle
    \begin{equation*}
        \underline{\mathbb C}_M
        \subset
        f^*\mathbb H_{\mathcal L}
    \end{equation*}
    agrees with the given complex structure on $E$.
\end{definition}

The family monopole map is defined on Sobolev completions of the vertical
spaces of spinors and differential forms. We must therefore verify that the
twisted quaternionic structure survives passage from a bundle over $M$ to the
corresponding Hilbert bundle of vertical sections over $B$.

\begin{rem}
    Let $q\colon M\rightarrow B$ be a smooth fiber bundle with compact fiber,
    and let $p\colon E\rightarrow M$ be a Hermitian vector bundle. For
    $k\geq0$, denote by
    \begin{equation*}
        W^{k,2}(M/B;E)\longrightarrow B
    \end{equation*}
    the Hilbert bundle of vertical $W^{k,2}$-sections, whose fiber over
    $b\in B$ is
    \begin{equation*}
        W^{k,2}(M/B;E)_b
        \coloneqq
        W^{k,2}(M_b;E|_{M_b}).
    \end{equation*}
    Local trivializations of $q$ and $p$ induce local trivializations of
    $W^{k,2}(M/B;E)$, making it a topological Hilbert bundle over $B$.
\end{rem}

\begin{lemma}
    If $E\rightarrow M$ is an $\mathcal L$-twisted quaternionic bundle
    relative to $q\colon M\rightarrow B$, then
    $W^{k,2}(M/B;E)$ is naturally an $\mathbb H_{\mathcal L}$-module bundle
    for every $k\geq0$.
\end{lemma}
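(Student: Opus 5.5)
The plan is to define the module structure pointwise over $B$ and then check that it is continuous with respect to the local trivializations of $W^{k,2}(M/B;E)$ from the preceding remark. Over $b\in B$ the fiber $M_b=q^{-1}(b)$ maps to the single point $b$, since $f=q$ here. So $f^*\mathbb H_{\mathcal L}|_{M_b}$ is the trivial algebra bundle $M_b\times(\mathbb H_{\mathcal L})_b$. For $h\in(\mathbb H_{\mathcal L})_b$ and a section $\sigma$ of $E|_{M_b}$, set
\begin{equation*}
    (h\cdot\sigma)(x)\coloneqq h\cdot\sigma(x),\qquad x\in M_b .
\end{equation*}
Here $h$ is regarded as the constant section of $f^*\mathbb H_{\mathcal L}|_{M_b}$. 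The algebra axioms hold pointwise, so this is an action of $(\mathbb H_{\mathcal L})_b$ on sections. By the defining condition on $E$, the subalgebra $\underline{\mathbb C}$ acts through the given complex structure, so the complex structure on the Hilbert bundle is the one induced by $E$.

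Next I check that the action preserves $W^{k,2}$ and is bounded. Multiplication by $h$ is a smooth bundle endomorphism of $E|_{M_b}$, and it is parallel along $M_b$ for any connection on $E$ compatible with a local trivialization of $\mathbb H_{\mathcal L}$. So it maps $W^{k,2}$ to $W^{k,2}$, with operator norm bounded by a constant times $|h|$. For $k=0$, the action of unit quaternions is unitary if the Hermitian metric on $E$ is compatible. If the metric is not compatible, I only need boundedness, because the module structure is the claim.

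The main step is continuity in $b$, which I expect to be the only nontrivial point. I would work over an open set $U\subset B$ on which $P_{\mathcal L}$ has a section $u$. The section $u$ identifies $\mathbb H_{\mathcal L}|_U\cong U\times\mathbb H$ as algebra bundles, so $E|_{q^{-1}(U)}$ becomes a module over the fixed algebra $\mathbb H$, with $j$ acting by a smooth antilinear endomorphism $J_u$. I then shrink $U$ and choose a local trivialization $q^{-1}(U)\cong U\times F$ together with a bundle trivialization of $E$ over it. In these coordinates $J_u$ is a smooth family of endomorphisms $J_b$ of a fixed bundle over $F$, varying smoothly in $b$. Hence $b\mapsto J_b$ is continuous into bounded operators on $W^{k,2}(F;E_0)$, since $W^{k,2}$ multiplier norms are controlled by $C^k$ norms of the coefficients. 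The action map $\mathbb H_{\mathcal L}\times_B W^{k,2}(M/B;E)\to W^{k,2}(M/B;E)$ is therefore continuous.

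Naturality follows because changing the frame $u$ by a function $z\colon U\to S^1$ changes the identification by $\vartheta(z)$. By construction this is exactly the transition function of $\mathbb H_{\mathcal L}$, so the locally defined actions glue to a well-defined action of the global algebra bundle. The construction is also functorial in bundle maps of $E$ that commute with the module structure.
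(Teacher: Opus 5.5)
Your proposal is correct and follows the same route as the paper: you let each $h\in(\mathbb H_{\mathcal L})_b$ act pointwise as a constant section along $M_b$, observe that this preserves $W^{k,2}$-regularity, and check compatibility with local trivializations. You give more detail than the paper on continuity, namely norm-continuity of $b\mapsto J_b$ via $C^k$ control of multiplier norms. The remark that $h$ is parallel is unnecessary, since smoothness of the module action already makes multiplication by $h$ bounded on $W^{k,2}$.
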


\begin{proof}
    For $b\in B$, the $q^*\mathbb H_{\mathcal L}$-module structure on $E$
    induces an action
    \begin{equation*}
        (h\cdot s)(x)
        \coloneqq
        h\cdot s(x),
        \qquad
        h\in(\mathbb H_{\mathcal L})_b,
        \quad
        s\in W^{k,2}(M_b;E|_{M_b}).
    \end{equation*}
    Since $h$ is constant along the fiber $M_b$, this action preserves
    $W^{k,2}$-regularity. The resulting fiberwise actions are compatible with
    local trivializations and hence define an $\mathbb H_{\mathcal L}$-module
    structure on $W^{k,2}(M/B;E)$.
\end{proof}

The following instance of an $\mathcal L$-twisted quaternionic bundle will
play a prominent role in the subsequent discussion.

\begin{lemma}
    \label{lem: twisted quaternionic spinor bundles}
    Let $(\mathfrak s,\lambda)$ be the $\mathcal L$-twisted family spin
    structure supplied by
    Proposition~\ref{Existence of an L-twisted family spin structure}, and let
    \begin{equation*}
        S=S^+\oplus S^-\longrightarrow X
    \end{equation*}
    be its spinor bundle. Then the following hold.
    \begin{enumerate}
        \item
        The bundles $S^+$ and $S^-$ are modules over
        $\pi^*\mathbb H_{\mathcal L}$.

        \item
        For every $k\geq0$, the Hilbert bundles
        $W^{k,2}(X/B;S^\pm)$ are modules over $\mathbb H_{\mathcal L}$.

        \item
        If $A$ is a unitary connection on $\mathcal L$ and
        $\widetilde A\coloneqq\pi^*A$, then the family Dirac operator
        \begin{equation*}
            D^+_{\widetilde A}\colon
            W^{k+1,2}(X/B;S^+)
            \longrightarrow
            W^{k,2}(X/B;S^-)
        \end{equation*}
        is $\mathbb H_{\mathcal L}$-linear.
    \end{enumerate}
\end{lemma}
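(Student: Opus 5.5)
The plan is to build the $\pi^*\mathbb H_{\mathcal L}$-module structure locally over $B$ from ordinary spin structures, and then check that the local structures agree on overlaps. Parts (2) and (3) will then follow formally.

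For (1), fix an open cover $\{U_\alpha\}$ of $B$ over which $\mathcal L$ admits unit sections $e_\alpha$, with transition functions $g_{\alpha\beta}\colon U_\alpha\cap U_\beta\to S^1$. Over $\pi^{-1}(U_\alpha)$, the section $\lambda^{-1}(\pi^*e_\alpha)$ trivializes $\det(\mathfrak s)$. This reduces $\mathfrak s|_{\pi^{-1}(U_\alpha)}$ to a spin structure $\mathfrak s_\alpha$ with $S^\pm\cong S^\pm_{\mathfrak s_\alpha}$. The spin spinor bundles $S^\pm_{\mathfrak s_\alpha}$ carry the standard right $\mathbb H$-module structure: the complex structure comes from $i\in\mathbb C$, and $j$ acts as a complex-antilinear map commuting with $\mathrm{Spin}(4)=\mathrm{Sp}(1)\times\mathrm{Sp}(1)$ and with Clifford multiplication. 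The frame $e_\alpha$ trivializes $\mathbb H_{\mathcal L}|_{U_\alpha}\cong U_\alpha\times\mathbb H$, so this gives a local $\pi^*\mathbb H_{\mathcal L}$-action.

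The key computation is on overlaps. The spin reductions $\mathfrak s_\alpha$ and $\mathfrak s_\beta$ differ by a choice of square root of $g_{\alpha\beta}$. Concretely, the identity map of $S^\pm$, viewed as a map $S^\pm_{\mathfrak s_\beta}\to S^\pm_{\mathfrak s_\alpha}$, is multiplication by $g_{\alpha\beta}^{1/2}$, since $\mathrm{Spin}^c(4)=(\mathrm{Spin}(4)\times S^1)/\pm1$ and the $S^1$ factor acts by scalars. A local square root always exists because $U_\alpha\cap U_\beta$ can be taken contractible.

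Conjugating $j$ by the scalar $z^{1/2}$ gives $z^{1/2}\,j\,z^{-1/2}=z\,j$, using $jz=\bar z j$. So the two local quaternionic structures are related exactly by $\vartheta(g_{\alpha\beta})$, which is also the transition function of $\mathbb H_{\mathcal L}$. Hence the local actions glue to a global action of $\pi^*\mathbb H_{\mathcal L}=\pi^*P_{\mathcal L}\times_\vartheta\mathbb H$. The canonical summand $\underline{\mathbb C}$ acts by the given complex structure, because $\vartheta$ fixes $\mathbb C$.

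The main obstacle is getting the sign and convention right, namely that the transition is $\vartheta(g)$ and not $\vartheta(g^{-1})$. This has to be matched against the definition of $\vartheta$ via the identification $\mathbb H_{\mathcal L}\cong\underline{\mathbb C}\oplus\mathcal L$. I would fix conventions so that $j\cdot$ sends $S^\pm$ antilinearly to $S^\pm\otimes\pi^*\mathcal L$, i.e.\ $j$ is a section of $\mathcal L$. Equivalently, $\bar S^\pm\cong S^\pm\otimes\det(\mathfrak s)^{-1}$, or with the other sign, which is absorbed by the choice of conventions.

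Part (2) is then immediate from the preceding lemma applied with $M=X$, $q=\pi$ and $E=S^\pm$.

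For (3), I check $\mathbb H_{\mathcal L}$-linearity locally over $U_\alpha$. Under the identification with $\mathfrak s_\alpha$, the $\mathrm{Spin}^c$ connection built from the vertical Levi-Civita connection and $\widetilde A=\pi^*A$ restricts on each fiber $X_b$ to the spin connection twisted by the flat connection $\pi^*A|_{X_b}$, which is trivial on $X_b$. In the gauge $e_\alpha$ it equals the spin connection plus $\tfrac12\pi^*(A_\alpha)$, where $A_\alpha$ is the local connection form of $A$ in the frame $e_\alpha$. The vertical Dirac operator only differentiates along the fibers, and $\pi^*A_\alpha$ vanishes on vertical vectors. So on $X_b$ it agrees with the untwisted spin Dirac operator, which commutes with $j$ because $j$ is parallel and commutes with Clifford multiplication. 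It also commutes with the constant action of $(\mathbb H_{\mathcal L})_b$, since $h$ is constant along $X_b$. Linearity is a fiberwise statement, so it holds globally.
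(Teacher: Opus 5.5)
Your proposal is correct and is essentially the argument the paper relies on. The paper states this lemma without a separate proof and instead points to the preceding remarks: a local unit frame of $\mathcal L$ reduces $\mathfrak s$ to a spin structure, the resulting quaternionic structures differ on overlaps by $\vartheta(z)$, and part (2) is the preceding lemma on Sobolev completions. Your write-up supplies exactly these details, including the computation $z^{1/2}jz^{-1/2}=zj$, the left/right-module convention behind the $\vartheta(g)$ versus $\vartheta(g^{-1})$ ambiguity, and the fiberwise argument for (3) using that $\pi^*A$ vanishes on vertical vectors.
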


The quaternion algebra bundle contains slightly more structure than is
required for the monopole map. The relevant symmetry is obtained by retaining
the subgroup
\begin{equation*}
    \m{Pin}(2)=S^1\cup jS^1\subset\mathbb H^\times.
\end{equation*}
Because the local quaternionic structures are related by the automorphisms
$\vartheta(z)$, the resulting symmetry is not, in general, an action of the
constant group $\m{Pin}(2)$. It is instead an action of a bundle of groups over
$B$.

The action of $\vartheta$ preserves $\m{Pin}(2)$. Consequently, it restricts
to a homomorphism
\begin{equation*}
    \vartheta\colon
    S^1\longrightarrow\m{Aut}(\m{Pin}(2)),
\end{equation*}
and we define
\begin{equation*}
    \m{Pin}(2)_{\mathcal L}
    \coloneqq
    P_{\mathcal L}\times_\vartheta\m{Pin}(2)
    \longrightarrow B.
\end{equation*}
Since the transition functions act by group automorphisms,
$\m{Pin}(2)_{\mathcal L}$ is naturally a group object in the slice category
$\cat{Top}/B$. It is important that $\m{Pin}(2)_{\mathcal L}$ is a bundle of
groups, rather than a principal $\m{Pin}(2)$-bundle. In particular, it admits
fiberwise multiplication and inversion maps
\begin{equation*}
    \mu\colon
    \m{Pin}(2)_{\mathcal L}\times_B\m{Pin}(2)_{\mathcal L}
    \rightarrow
    \m{Pin}(2)_{\mathcal L}\und{1.0cm}
    \iota\colon
    \m{Pin}(2)_{\mathcal L}
    \rightarrow
    \m{Pin}(2)_{\mathcal L},
\end{equation*}
as well as a canonical identity section
\begin{equation*}
    e\colon B\rightarrow\m{Pin}(2)_{\mathcal L},\quad
    b\mapsto[p,1],
\end{equation*}
where $p\in(P_{\mathcal L})_b$ is arbitrary. This is well-defined because
$\vartheta(z)$ fixes the identity element of $\m{Pin}(2)$ for every
$z\in S^1$.

Moreover, $\vartheta(z)$ fixes the subgroup
$S^1\subset\m{Pin}(2)$ pointwise. It follows that
$\m{Pin}(2)_{\mathcal L}$ contains a canonical normal subgroup bundle
\begin{equation*}
    \underline{S}^1_B
    \subset
    \m{Pin}(2)_{\mathcal L}.
\end{equation*}
Since $\m{Pin}(2)/S^1\cong C_2$ and the induced action of $\vartheta$ on this quotient is trivial, there is a
canonical identification
\begin{equation*}
    \m{Pin}(2)_{\mathcal L}/\underline{S}^1_B
    \cong
    \underline{C_2}_B.
\end{equation*}
We denote the resulting quotient homomorphism by
\begin{align}
    \label{epsilon map definition}
    \epsilon\colon
    \m{Pin}(2)_{\mathcal L}
    \longrightarrow
    \underline{C_2}_B.
\end{align}
Although $\m{Pin}(2)_{\mathcal L}$ admits a canonical identity section, it
does not admit a preferred global choice of the element $j$.

\begin{definition}
    Let $G\in\cat{Grp}(\cat{Top}/B)$. A \textbf{$G$-action} on an object
    $E\in\cat{Top}/B$ is a fiberwise action
    \begin{equation*}
        G\times_BE\longrightarrow E
    \end{equation*}
    satisfying the usual associativity and identity conditions. If $E$ is a
    retractive space over $B$, we additionally require its section to be fixed
    by the action.
\end{definition}

In particular, if $f\colon M\rightarrow B$ and $E\rightarrow M$ is an $\mathcal L$-twisted quaternionic bundle, we obtain actions
\begin{equation*}
    f^*\m{Pin}(2)_{\mathcal L}\times_ME
    \longrightarrow E\und{1.0cm}\Pin(2)_{\mathcal{L}}\times_BW^{k,2}(M/B,E)\rightarrow W^{k,2}(M/B,E).
\end{equation*}
We have therefore replaced the usual global $\m{Pin}(2)$-symmetry by a fiberwise action of the group bundle $\m{Pin}(2)_{\mathcal L}\rightarrow B$. This is the natural symmetry retained by the family Dirac operator and the nonlinear monopole map. We shall formulate the family monopole map and its finite-dimensional approximation as $\m{Pin}(2)_{\mathcal L}$-equivariant
maps over $B$, that is, as objects and morphisms in the categories
\begin{equation*}
    \cat{Top}^{\m{Pin}(2)_{\mathcal L}}/B
    \und{1.0cm}
    B\backslash
    \cat{Top}^{\m{Pin}(2)_{\mathcal L}}/B.
\end{equation*}

After passing to $\underline{S}^1_B$-quotients, the residual symmetry group
bundle is the constant bundle $\underline{C_2}_B=C_2\times B$. Upon subsequently forgetting the projection to $B$, or collapsing the distinguished section where appropriate, we obtain ordinary $C_2$-spaces and pointed $C_2$-spaces. The resulting objects therefore belong to
\begin{equation*}
    \cat{Top}^{C_2}
    \und{1.0cm}\cat{Top}^{C_2}_*.
\end{equation*}
In the subsequent discussion, we will encounter the trivial real line bundle
over $B$ equipped with the nontrivial action induced by $\epsilon$. We denote
this bundle by
\begin{equation*}
    \widetilde{\mathbb R}_B\longrightarrow B.
\end{equation*}
After passing to the $C_2$-quotient description, the same bundle will be
written as
\begin{equation*}
    \mathbb R^\sigma\times B\longrightarrow B,
\end{equation*}
where $\sigma$ denotes the nontrivial one-dimensional real representation of
$C_2$.
\subsection{The family monopole map}
\label{The family monopole map}

Fix $k\geq 3$.  We consider the Hilbert bundles
\begin{align*}
    \mathcal W^+
    &\coloneq W^{k+1, 2}(X/B; S^+),
    &
    \mathcal W^-
    &\coloneq W^{k, 2}(X/B; S^-),\\
    \mathcal V^+
    &\coloneqq W^{k+1, 2}(X/B; \mathrm i\Lambda^1V^*\pi),
    &
    \mathcal V^-
    &\coloneq
    W^{k, 2}(X/B; \mathrm i\Lambda^2_+V^*\pi)
    \oplus
    W^{k, 2}(X/B; \mathrm i\Lambda^0V^*\pi)/\mathrm i\mathbb R.
\end{align*}
The group bundle $\m{Pin}(2)_{\mathcal{L}}$ acts on $\mathcal W^\pm$ through their $\mathbb H_{\mathcal L}$-module structures and on $\mathcal V^\pm$ through the sign homomorphism $\epsilon$.  In a local frame, this is the familiar $\mathrm{Pin}(2)$-action: $S^1$ acts by complex multiplication on spinors and trivially on forms, while $j$ acts quaternionically on spinors and by $-1$ on forms.

\begin{definition}
    The family Seiberg--Witten monopole map is
\begin{equation*}
    \mathsf{SW}_B\colon
    \mathcal W^+\oplus\mathcal V^+
    \rightarrow
    \mathcal W^-\oplus\mathcal V^-,\quad
    (\phi,\alpha)\mapsto
    \left(
        D^+_{\widetilde A+\alpha}\phi,
        d^+\alpha-\rho^{-1}(\phi^*\phi)_0,
        d^*\alpha
    \right),
\end{equation*}
where $\rho\colon \mathrm i\Lambda^2_+\to\mathfrak{su}(S^+)$ denotes Clifford multiplication.
\end{definition}

\begin{lemma}
    The gauge symmetry and the quaternionic conjugation symmetry of the Seiberg--Witten equations imply that $\mathsf{SW}_B$ is $\mathrm{Pin}(2)_{\mathcal L}$-equivariant over $B$.  It decomposes as
\begin{equation*}
    \mathsf{SW}_B=(L_1,L_2)+Q,
\end{equation*}
where $L_1\coloneqq D^+_{\widetilde A}\colon\mathcal W^+\rightarrow\mathcal W^-$ and $L_2\coloneqq \mathrm d^+\oplus\mathrm d^*\colon\mathcal V^+\rightarrow\mathcal V^-$.
The operator $L_1$ is $\mathbb H_{\mathcal L}$-linear and Fredholm, whereas $L_2$ is real-linear, Fredholm, and $\mathrm{Pin}(2)_{\mathcal L}$-equivariant for the sign actions.  Thus $L$ is a $\mathrm{Pin}(2)_{\mathcal L}$-equivariant Fredholm morphism, but only its spinorial summand is quaternionic-linear.  The nonlinear term $Q$ is $\mathrm{Pin}(2)_{\mathcal L}$-equivariant and compact on bounded sets by Sobolev multiplication and Rellich compactness.
\end{lemma}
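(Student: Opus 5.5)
The plan is to verify each claim fiberwise over $B$ and in a local unitary frame of $\mathcal L$. Fix $b\in B$ and a unit vector $p\in(P_{\mathcal L})_b$ over a small open set $U\ni b$. On $U$ this frame trivializes $\pi^*\mathcal L$, so $(\mathfrak s,\lambda)$ reduces to an ordinary family spin structure on $X|_U$. It also trivializes $\mathbb H_{\mathcal L}|_U\cong U\times\mathbb H$ and $\mathrm{Pin}(2)_{\mathcal L}|_U\cong U\times\mathrm{Pin}(2)$. In this trivialization the action on $\mathcal W^\pm$ is the standard one: $S^1$ acts by complex scalars and $j$ by the quaternionic structure of the spin spinor bundle. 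The action on $\mathcal V^\pm$ is through $\epsilon$, so $j$ acts by $-1$. Since $\widetilde A=\pi^*A$ and $\pi^*A|_{X_b}$ is trivial once we fix the frame, $D^+_{\widetilde A}$ restricts on $X_b$ to the ordinary spin Dirac operator.

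With these identifications, equivariance of $\mathsf{SW}_B$ over $U$ is the classical Bauer--Furuta computation. The Dirac operator commutes with $j$ because Clifford multiplication is $\mathbb H$-linear. For the curvature term one uses $D_{A+\alpha}(j\phi)=jD_{A-\alpha}\phi$, using $\alpha\in i\Omega^1$ and $ji=-ij$. The quadratic term satisfies $(j\phi\,(j\phi)^*)_0=-\overline{(\phi\phi^*)_0}$, which corresponds under $\rho^{-1}$ to $-\rho^{-1}(\phi\phi^*)_0$ on imaginary forms. Finally, $d^+$ and $d^*$ are odd under $-1$ in the required way. $S^1$-equivariance follows from the gauge action by constant gauge transformations, which fix forms.

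The only genuinely global step, and I expect it to be the main point needing care, is independence of the frame. If $p$ is replaced by $pz$, the identifications change by $\vartheta(z)$, which is conjugation by $z^{1/2}$. So we must check that the spin reduction, and hence the spinor identification, changes by multiplication by $z^{1/2}$, a constant gauge transformation in $S^1\subset\mathrm{Pin}(2)$. This follows because the spin structure induced by a trivialization of $\det\mathfrak s$ is determined up to the square root of the change of trivialization. Since $\mathsf{SW}_B$ is $S^1$-equivariant under constant gauge transformations, the local equivariance statements agree on overlaps, and this is precisely how the cocycle of $\mathrm{Pin}(2)_{\mathcal L}$ was built. The sign ambiguity in $z^{1/2}$ is harmless because $-1$ lies in $S^1$.

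For the decomposition, set $L=(L_1,L_2)$ and $Q(\phi,\alpha)=(\alpha\cdot\phi,\,-\rho^{-1}(\phi\phi^*)_0,\,0)$. Then $L_1$ is $\mathbb H_{\mathcal L}$-linear by Lemma~\ref{lem: twisted quaternionic spinor bundles}(3). It is Fredholm fiberwise by ellipticity, with index varying continuously, so it is a Fredholm bundle morphism. For $L_2$, the complex $d^+\oplus d^*$ is elliptic, and the quotient by $i\mathbb R$ removes the constants from the cokernel. $L_2$ is equivariant because it is linear and $j$ acts by $-1$ on source and target. Compactness of $Q$ on bounded sets follows since $k\ge3$ makes $W^{k+1,2}$ a Banach algebra in dimension $4$. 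The products land in $W^{k+1,2}$, and the Rellich embedding $W^{k+1,2}\hookrightarrow W^{k,2}$ is compact on each fiber. Uniformity in $b$ comes from local trivializations and compactness of $B$.
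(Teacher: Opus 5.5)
Your proposal is correct and follows the route the paper only indicates inside the statement itself, with no separate proof given. You reduce, via a local unit frame of $\mathcal L$, to the classical $\mathrm{Pin}(2)$-equivariance of the monopole map, and you get compactness of $Q$ from the Sobolev multiplication $W^{k+1,2}\times W^{k+1,2}\to W^{k+1,2}$ for $k\geq 3$ followed by the Rellich embedding into $W^{k,2}$.

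One small correction: the quadratic identity should read $(j\phi\,(j\phi)^*)_0=-(\phi\phi^*)_0$, not $-\overline{(\phi\phi^*)_0}$. The reason is that $j$ is antiunitary and commutes with $\mathfrak{su}(S^+)$, so it anticommutes with the traceless Hermitian endomorphisms; complex conjugation does not enter. Your conclusion $\rho^{-1}(j\phi(j\phi)^*)_0=-\rho^{-1}(\phi\phi^*)_0$ is nonetheless the right one.
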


We now apply the standard finite-dimensional approximation of the families monopole map; see, for example, \cite[Section 2.3]{Baraglia_Konno22}.  We recall the construction in the present twisted setting in order to keep track of the full symmetry group bundle.

Since the fibres are simply connected, $L_2$ is fibrewise injective.  Its cokernel is the rank-seven bundle
\begin{equation*}
    \coker(L_2)\eqqcolon H^+\rightarrow B
\end{equation*}
of fibrewise self-dual harmonic two-forms.  By the same argument as in Subsection \ref{Cohomological reduction for n=2} $H^+$ is determined by a map 
\begin{equation*}
    B\rightarrow \m{Gr}^+_7(\mathbb{R}^{7,39}),
\end{equation*}
and hence trivial as $\m{Gr}^+_7(\mathbb{R}^{7,39})$ is contractible. Choose a sufficiently large finite-dimensional trivial subbundle
\begin{equation*}
    V^+\subset\mathcal V^+
\end{equation*}
and set
\begin{equation*}
    V^-
    \coloneqq
    L_2(V^+)\oplus H^+
    \subset\mathcal V^-.
\end{equation*}
Both bundles are $\mathrm{Pin}(2)_{\mathcal L}$-invariant, and after choosing trivializations we have
\begin{equation*}
    V^+
    \cong
    \underline{\widetilde{\mathbb R}}^{\,m}_B\und{1.0cm}
    V^-
    \cong
    \underline{\widetilde{\mathbb R}}^{\,m+7}_B.
\end{equation*}

Next choose a sufficiently large finite-rank $\mathbb H_{\mathcal L}$-submodule bundle
\begin{equation*}
    W^-\subset\mathcal W^-
\end{equation*}
which is fibrewise transverse to $\mathrm{im}(L_1)$, and define
\begin{equation*}
    W^+
    \coloneqq
    L_1^{-1}(W^-)
    \subset\mathcal W^+.
\end{equation*}
Then $W^+$ is again a finite-rank $\mathbb H_{\mathcal L}$-module bundle.  Choosing module-compatible metrics makes the orthogonal projection
\begin{equation*}
    \mathrm{pr}_{W^-\oplus V^-}\colon
    \mathcal W^-\oplus\mathcal V^-
    \rightarrow
    W^-\oplus V^-
\end{equation*}
$\mathrm{Pin}(2)_{\mathcal L}$-equivariant.

\begin{definition}
    With the above notation, we obtain the finite-dimensional approximation
\begin{equation*}
    \mathsf{SW}^{\mathrm{app}}_B\colon
    W^+\oplus V^+
    \rightarrow
    W^-\oplus V^-,\quad
    (\phi,\alpha)\mapsto
    L(\phi,\alpha)
    +\mathrm{pr}_{W^-\oplus V^-}Q(\phi,\alpha).
\end{equation*}
\end{definition}

The construction uses only orthogonal projections, the Fredholm property of $L$, and uniform compactness of $Q$ over the compact base. The chosen orthogonal projections are $\m{Pin}(2)_{\mathcal{L}}$-equivariant. Since both $L$ and $Q$ are $\m{Pin}(2)_{\mathcal{L}}$-equivariant, so is $\mathsf{SW}^{\mathrm{app}}_B$.

\begin{rem}
    Different sufficiently large choices of $V^+$ and $W^-$ agree up to stabilization and pointed $\mathrm{Pin}(2)_{\mathcal L}$-equivariant homotopies over $B$. Consequently, the approximations determine a well-defined stable family Bauer--Furuta class in the appropriate parametrized equivariant stable homotopy theory; schematically
    \begin{equation*}
        \mathsf{BF}(X,\mathcal{L})= \underset{V^+\oplus W^+\subset \mathcal{V}^+\oplus\mathcal{W}^+}{\mathrm{colim}}\mathsf{SW}^{\m{app}}_B.
    \end{equation*}
    We will not make this construction explicit, since below we only use one sufficiently large representative.
\end{rem}

For a sufficiently large approximation, compactness of the Seiberg--Witten equations gives constants $R\gg 1$ and $\epsilon>0$ such that, uniformly over $B$,
\begin{equation*}
    ||(\phi,\alpha)||_{W^{k+1,2}}=R
    \qquad\Longrightarrow\qquad
    ||\mathsf{SW}^{\mathrm{app}}_B(\phi,\alpha)||_{W^{k,2}}\geq\epsilon.
\end{equation*}
Radial collapse outside the $\epsilon$-ball therefore produces a pointed $\mathrm{Pin}(2)_{\mathcal L}$-equivariant map over $B$,
\begin{equation*}
    \mathsf{SW}^{\mathrm{app}}_{B,+}
    \colon
    S_B^{W^+\oplus V^+}
    \rightarrow
    S_B^{W^-\oplus V^-}.
\end{equation*}
Equivalently, after collapsing the sections at infinity, it is a map of Thom spaces
\begin{equation*}
    \mathsf{SW}^{\mathrm{app}}_{B,+}\colon
    S^{m\widetilde{\mathbb R}}\wedge \m{Th}(W^+)
    \rightarrow
    S^{(m+7)\widetilde{\mathbb R}}\wedge \m{Th}(W^-).
\end{equation*}

The $S^1$-fixed vectors in $W^\pm$ are zero, whereas $S^1$ acts trivially on $V^\pm$.  Thus the $S^1$-fixed locus of $\mathsf{SW}^{\mathrm{app}}_{B,+}$ is
\begin{equation*}
    (\mathsf{SW}^{\mathrm{app}}_{B,+})^{S^1}\colon
    S^{m\widetilde{\mathbb R}_B}\rightarrow
    S^{(m+7)\widetilde{\mathbb R}_B}.
\end{equation*}
On the reducible locus $\{0\}\oplus V^+$, the nonlinear term vanishes and the monopole map is $L_2$.  After an equivariant homotopy supported away from the section at infinity, we may therefore arrange that $(\mathsf{SW}^{\mathrm{app}}_{B,+})^{S^1}$ is exactly the constant family of standard inclusions
\begin{equation*}
    S^{m\widetilde{\mathbb R}_B}
    \lhook\joinrel\rightarrow
    S^{(m+7)\widetilde{\mathbb R}_B}.
\end{equation*}

\begin{lemma}
    \label{lem: locally quaternionic injection}
    There exists an injective $\mathbb H_{\mathcal L}$-linear bundle map
    \begin{equation*}
        W^-\lhook\joinrel\rightarrow W^+.
    \end{equation*}
    Consequently, we can choose an orthogonal decomposition of $\mathbb H_{\mathcal L}$-module bundles
\begin{equation*}
    W^+
    \cong
    W^-\oplus W^\perp,
\end{equation*}
where $W^\perp$ has quaternionic rank two.
\end{lemma}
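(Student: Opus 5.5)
Since $W^+ = L_1^{-1}(W^-)$ and $W^-$ is transverse to $\mathrm{im}(L_1)$, the restriction $L_1|_{W^+}\colon W^+\to W^-$ is fibrewise surjective. Its kernel is $\ker L_1$, the family kernel of the twisted Dirac operator. This gives a short exact sequence of $\mathbb H_{\mathcal L}$-module bundles over $B$,
\begin{equation*}
    0\longrightarrow \ker(D^+_{\widetilde A})\longrightarrow W^+\xrightarrow{\,L_1\,} W^-\longrightarrow 0 .
\end{equation*}
In fact $\ker(D^+_{\widetilde A})$ may jump in rank, so we do not use it as a bundle directly. Instead we only use that $L_1|_{W^+}$ is a surjective $\mathbb H_{\mathcal L}$-linear map of bundles of constant rank. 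Its kernel is then an honest $\mathbb H_{\mathcal L}$-submodule bundle of constant rank $\mathrm{rk}_{\mathbb C}W^+-\mathrm{rk}_{\mathbb C}W^-=\mathrm{ind}_{\mathbb C}(D^+)=-\sigma(E(4))/8=4$, that is, quaternionic rank two.

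The plan is to split this sequence $\mathbb H_{\mathcal L}$-linearly using the module-compatible metrics fixed earlier. First we check that the Hermitian metric on $W^+$ can be taken invariant under the unit elements of $\mathbb H_{\mathcal L}$. Locally this is the usual quaternionic-Hermitian condition. The induced $L^2$-metric on $\mathcal W^\pm$ is invariant because $\mathrm{Pin}(2)_{\mathcal L}$ acts isometrically on $S^\pm$: locally, $j$ acts by a Clifford-compatible antiunitary map. The metric then restricts to $W^+$. Next, the orthogonal complement $W^\perp := (\ker L_1|_{W^+})^{\perp}$ is preserved by the $\mathbb H_{\mathcal L}$-action. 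Because the unit quaternions act isometrically and generate $\mathbb H_{\mathcal L}$ as an algebra bundle, the complement of an invariant submodule is again invariant.

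With this in hand, $L_1$ restricts to an $\mathbb H_{\mathcal L}$-linear isomorphism $W^\perp\to W^-$. Its inverse composed with the inclusion $W^\perp\hookrightarrow W^+$ is the desired injective $\mathbb H_{\mathcal L}$-linear bundle map $W^-\hookrightarrow W^+$. Identifying $W^-$ with its image gives $W^+\cong W^-\oplus \ker(L_1|_{W^+})$, with the second summand of quaternionic rank two as computed above. Renaming that summand $W^\perp$ matches the notation of the statement.

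The step I expect to need care is the continuity and constant rank of $\ker(L_1|_{W^+})$, and the claim that $W^+$ is a genuine finite-rank bundle. Both follow from transversality of $W^-$ to $\mathrm{im}(L_1)$ uniformly over the compact base $B$. This makes $L_1\oplus \mathrm{pr}$ fibrewise surjective onto $W^-$ after the standard stabilisation, so preimages vary continuously. The index count itself is the Atiyah--Singer value for a spin fibre of signature $-32$.
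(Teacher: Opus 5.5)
There is a genuine gap at the first step: the restriction $L_1|_{W^+}\colon W^+\to W^-$ is \emph{not} fibrewise surjective in general. Transversality means $W^-_b+\mathrm{im}(L_{1,b})=\mathcal W^-_b$. It does not mean $W^-_b\subset\mathrm{im}(L_{1,b})$. Hence
$L_1(W^+_b)=L_1\bigl(L_1^{-1}(W^-_b)\bigr)=W^-_b\cap\mathrm{im}(L_{1,b})$, which has codimension $\dim\coker(D^+_{\widetilde A,b})$ in $W^-_b$. In fact, whenever the cokernel is nonzero, transversality forces $W^-_b\not\subset\mathrm{im}(L_{1,b})$.

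Your own text shows the inconsistency. Since $\ker L_1\subset L_1^{-1}(W^-)$, the kernel of $L_1|_{W^+}$ is exactly $\ker D^+_{\widetilde A,b}$. You correctly note that this may jump in rank, so it cannot also be the kernel of a surjection between constant-rank bundles. The correct fibrewise sequence has four terms, $0\to\ker D^+_b\to W^+_b\to W^-_b\to\coker D^+_b\to 0$. So there is no reason for $(\ker L_1)^\perp\subset W^+$ to be a bundle that maps isomorphically onto $W^-$. Your closing appeal to ``the standard stabilisation'' does not repair this. Stabilising only yields the relation $[W^+]-[W^-]=\operatorname{ind}$ in $K$-theory. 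Passing from that to an honest splitting $W^+\cong W^-\oplus W^\perp$ needs a cancellation argument, which you do not give.

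The missing idea is that the lemma is purely topological and does not need $L_1$ at all. Your index count $\mathrm{rank}_{\mathbb H}W^+-\mathrm{rank}_{\mathbb H}W^-=2$ is correct. Consider the bundle over $B$ whose fibre is the space of $\mathbb H_{\mathcal L}$-linear injections $W^-_b\to W^+_b$. Each fibre is homotopy equivalent to the quaternionic Stiefel manifold $V_r(\mathbb H^{r+2})\cong\mathrm{Sp}(r+2)/\mathrm{Sp}(2)$, which is $10$-connected. Since $\dim B=4$, obstruction theory gives a global section. The orthogonal complement of its image is then the required $W^\perp$; your argument that complements of invariant submodules are invariant is fine for this part. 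This is the paper's proof.

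Alternatively, you could salvage your route. First perturb $L_1$ by a small finite-rank $\mathbb H_{\mathcal L}$-linear operator so that the family is transverse to the cokernel strata. For quaternionic Fredholm operators of quaternionic index $2$, the locus with $\dim_{\mathbb H}\coker\geq1$ has real codimension $4\cdot1\cdot3=12>4$. So the perturbed family is surjective over all of $B$, your splitting then applies, and $W^+$ changes only up to isomorphism. That transversality step is a real argument you would have to supply, however, and it is more work than the obstruction-theoretic one.
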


\begin{proof}
    Since the spin Dirac operator on $F\cong_{\cat{Top}}E(4)$ has complex index $-\sigma(E(4))/8=4$, its quaternionic index is two. Hence
    \begin{equation*}
        \mathrm{rank}_{\mathbb H}W^+
        -\mathrm{rank}_{\mathbb H}W^-
        =2.
    \end{equation*}
    If $r=\mathrm{rank}_{\mathbb H}W^-$, the space of quaternionic-linear injections
    \begin{equation*}
        \mathbb H^r\lhook\joinrel\rightarrow\mathbb H^{r+2}
    \end{equation*}
    deformation retracts onto the quaternionic Stiefel manifold
    \begin{equation*}
        V_m(\mathbb H^{r+2})
        \cong
        \mathrm{Sp}(r+2)/\mathrm{Sp}(2).
    \end{equation*}
    This Stiefel manifold is $10$-connected.  The bundle over $B$ whose fiber consists of $\mathbb H_{\mathcal L}$-linear injections $W^-_b\to W^+_b$ therefore has $10$-connected fibers.  Since $\dim(B)=4$, obstruction theory supplies a global section, which is the desired map.
\end{proof}

\subsection{Desuspension and reduction to a $C_2$-equivariant problem} \label{Desuspension and reduction to a $C_2$-equivariant problem}

Using Lemma \ref{lem: locally quaternionic injection}, let us fix an $\mathbb{H}_{\mathcal{L}}$-linear decomposition $W^+ \cong W^- \oplus W^\perp$ with $W^\perp$ of quaternionic rank two. Hence the finite-dimensional approximation of the previous section may be regarded as a pointed $\mathrm{Pin}(2)_{\mathcal{L}}$-equivariant map over $B$
\begin{equation*}
    \mathsf{SW}^{\mathrm{app}}_{B,+}\colon
    S^{m\widetilde{\mathbb R}_B}\wedge_B S^{W^-} \wedge_B S^{W^\perp} 
    \rightarrow
    S^{(m+7)\widetilde{\mathbb R}_B}\wedge_B S^{W^-}.
\end{equation*}
We will desuspend the common factor $S^{(m-1)\widetilde{\mathbb R}_B}\wedge_B S^{W^-}$.

\begin{prop}\label{prop: desuspension}
    There exists a pointed $\mathrm{Pin}(2)_{\mathcal{L}}$-equivariant map
    \begin{equation*}
        \mathsf{sw}_B\colon S^{\widetilde{\mathbb R}_B}\wedge_B S^{W^\perp} 
        \rightarrow
        S^{8\widetilde{\mathbb R}_B}
    \end{equation*}
    satisfying the following properties:
    \begin{enumerate}
        \item The restriction $\mathsf{sw}_B$ to the $S^1$-fixed locus is the standard fiberwise inclusion
        \begin{equation*}
            S^{\widetilde{\mathbb R}_B} \hookrightarrow S^{8\widetilde{\mathbb R}_B};
        \end{equation*}
        \item After suspension by $S^{(m-1)\widetilde{\mathbb R}_B}\wedge_B S^{W^-}$, the map $\mathsf{sw}_B$ is pointed $\mathrm{Pin}(2)_{\mathcal{L}}$-equivariantly homotopic over $B$ to $\mathsf{SW}^{\mathrm{app}}_{B,+}$ relative to the $S^1$-fixed locus.
    \end{enumerate}
\end{prop}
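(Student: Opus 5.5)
We will prove Proposition~\ref{prop: desuspension} by an equivariant obstruction-theoretic desuspension. Write $Y\coloneqq S^{\widetilde{\mathbb R}_B}\wedge_B S^{W^\perp}$ and $Z\coloneqq S^{8\widetilde{\mathbb R}_B}$, and let $\Sigma\coloneqq S^{(m-1)\widetilde{\mathbb R}_B}\wedge_B S^{W^-}\wedge_B(-)$ denote the suspension functor in $B\backslash\cat{Top}^{\m{Pin}(2)_{\mathcal L}}/B$. The two properties ask for a map $\mathsf{sw}_B\colon Y\to Z$ extending the standard inclusion $i\colon Y^{S^1}=S^{\widetilde{\mathbb R}_B}\hookrightarrow Z$, such that $\Sigma\,\mathsf{sw}_B\simeq\mathsf{SW}^{\mathrm{app}}_{B,+}$ rel $S^1$-fixed points. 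Since $S^1\subset\m{Pin}(2)_{\mathcal L}$ is a constant normal subgroup bundle acting freely on $Y\setminus Y^{S^1}$, and the $\epsilon$-quotient is the constant $\underline{C_2}_B$, both $Y$ and $\Sigma Y$ are relative $\m{Pin}(2)_{\mathcal L}$-CW complexes over $B$ (rel $Y^{S^1}$) whose relative cells are of the form $\m{Pin}(2)_{\mathcal L}/\Gamma\times D^n$ with $\Gamma\subseteq\underline{C_2}$-type finite subgroups, i.e. with free $S^1$-part. The plan is to compare the extension and homotopy problems before and after suspension cell-by-cell, and show that $\Sigma$ induces a bijection (or at least a surjection) on the relevant sets of equivariant homotopy classes rel the fixed locus.

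Concretely, I would pass to $S^1$-orbits: maps rel $Y^{S^1}$ from the free part correspond to $C_2$-equivariant (fiberwise over $B$) maps from $(Y\setminus Y^{S^1})/S^1$, which is a bundle over $B$ of pieces of dimension $\dim_{\mathbb R}(Y)-1=1+8-1=8$ relative cells plus $\dim B=4$, so relative cells of total dimension at most $12$. The obstructions to extending $i$ over these cells, and to deforming, lie in equivariant Bredon cohomology of $(Y/S^1, Y^{S^1})$ with coefficients in $\pi_n$ of fixed-point sets of the target fibers $S^8$, $S^{8\sigma}$ with $C_2$ acting by the antipodal-type sign; the fiber $Z_b=S^{8\sigma}$ has $C_2$-fixed set $S^0$. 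The key input is an \emph{equivariant Freudenthal suspension theorem} for parametrized spaces: the map $[Y,Z]_{\text{rel}}\to[\Sigma Y,\Sigma Z]_{\text{rel}}$ is bijective provided for each isotropy subgroup $H$ of a relative cell, $\dim(Y^H\setminus Y^{S^1}\text{-cells})\le 2\,\mathrm{conn}(Z^H)$ over $B$ (fiberwise connectivity plus base dimension). Here the only isotropy groups occurring on relative cells are trivial or the order-$2$ subgroups $\{\pm1\}$ (acting trivially on $W^\perp$? no — $-1$ acts by $-1$ on spinors, so relative cells have isotropy at most conjugates of $\langle j\rangle$-type elements of order $4$ meeting $S^1$ only in... ) — so I would first carefully compute the isotropy of points of $Y\setminus Y^{S^1}$: since $S^1$ acts freely on $S(W^\perp)$, isotropy is finite with trivial image in... in fact free, so relative cells are free $\m{Pin}(2)_{\mathcal L}$-cells, and only the nonequivariant connectivity of $Z_b=S^8$ (which is $7$-connected) enters.

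With free relative cells, the relative dimension of $(Y,Y^{S^1})$ modulo the group is $\dim_{\mathbb R}Y_b-\dim\m{Pin}(2)=9-1=8$ fiberwise, hence $12$ total including $B$, and the stable range condition reads $12\le 2\cdot 7$ for surjectivity/injectivity on homotopy classes (Freudenthal: $\dim\le 2n-1$ gives bijection with $n=8$, i.e. $12\le 15$). Thus $\Sigma$ is bijective on rel-fixed homotopy classes, and we take $\mathsf{sw}_B$ to be a preimage of $[\mathsf{SW}^{\mathrm{app}}_{B,+}]$, which automatically restricts to $i$ on the fixed locus by construction rel $Y^{S^1}$. I expect the main obstacle to be the parametrized, twisted-group version of equivariant Freudenthal: one must set up free $\m{Pin}(2)_{\mathcal L}$-cells over $B$ (using local trivializations of $P_{\mathcal L}$ and inducting over a cell structure of $B$), and check that the suspension coordinates $S^{(m-1)\widetilde{\mathbb R}_B}\wedge_B S^{W^-}$, which are twisted, still give the standard Freudenthal estimate fiberwise; I would handle this by reducing via the free-orbit correspondence to a nonequivariant fiberwise problem over the orbit bundle and applying the classical fiberwise Freudenthal theorem there.
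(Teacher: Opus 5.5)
Your plan is essentially the paper's proof: away from the $S^1$-fixed locus the $\mathrm{Pin}(2)$-action is free, so the relative cells are free of dimension $q\le \dim\Delta+8\le 12$ and the target $S^{8\widetilde{\mathbb R}}$ is $7$-connected, and the paper makes your parametrized Freudenthal step concrete exactly as you propose: it inducts over simplices of $B$, trivializes $\mathrm{Pin}(2)_{\mathcal L}$ on each simplex, and desuspends one free cell at a time via the adjunction $D^q\to\Omega^{\dim V}\Sigma^{\dim V}A$ (its relative Freudenthal lemma, with $q\le 12\le 14$).

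The one step your sketch leaves implicit concerns the suspension coordinates. Since $S^1$ acts nontrivially on $S^{W^-}$, the suspended fixed locus $S^{m\widetilde{\mathbb R}_B}$ is strictly smaller than the suspension of the fixed locus, $S^{m\widetilde{\mathbb R}_B}\wedge_B S^{W^-}$. So before desuspending you must homotope $\mathsf{SW}^{\mathrm{app}}_{B,+}$ rel $S^{m\widetilde{\mathbb R}_B}$ until it agrees with the suspended standard inclusion on $S^{m\widetilde{\mathbb R}_B}\wedge_B S^{W^-}$. This is possible because, with $r=\mathrm{rank}_{\mathbb H}W^-$, the extra free cells have dimension at most $m+4r+3$, while the target is $(m+4r+6)$-connected. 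This is the paper's ``we may arrange'' step, and it also explains why your $S^1$-orbit description is only available before suspending.
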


For the proof we need the following relative version of the Freudenthal suspension theorem.

\begin{lemma}\label{relative Freudenthal}
    Let $A$ be a pointed $\mathrm{Pin}(2)$-space which is (nonequivariantly) $p$-connected. Let $V$ be a finite-dimensional real $\mathrm{Pin}(2)$-representation. Suppose that $0 \leq q \leq 2p$ and that the map 
    \begin{equation*}
        f\colon S^{V}\wedge  D^{q}_{+} \wedge  \mathrm{Pin}(2)_{+}\to S^{V}\wedge A
    \end{equation*}
    is $\mathrm{Pin}(2)$-equivariant and satisfies 
    \begin{equation*}
        f|_{S^{V}\wedge  \partial D^{q}_{+} \wedge  \mathrm{Pin}(2)_{+}} = \mathrm{id}_{S^{V}}\wedge g_+
    \end{equation*}
    for some equivariant map $g\colon \partial D^{q}\times  \m{Pin}(2)\to A$.\footnote{Here $g_{+}$ means adding a base point to the domain of $g$ and sending it to the base point in $A$.} 

    Then, there exists a  $\mathrm{Pin}(2)$-equivariant map $f'\colon D^q \times \mathrm{Pin}(2) \to A$ such that
    \begin{equation*}
        f'|_{\partial D^q \times \mathrm{Pin}(2)} = g
    \end{equation*}
    and $f$ and $\mathrm{id}_{S^V} \wedge f'_+$ are $\mathrm{Pin}(2)$-equivariantly homotopic relative to $S^{V}\wedge  \partial D^{q}_{+} \wedge  \mathrm{Pin}(2)_{+}$.
\end{lemma}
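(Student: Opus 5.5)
The plan is to reduce Lemma~\ref{relative Freudenthal} to a nonequivariant relative Freudenthal statement via the free orbit structure of the domain.

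\textbf{Step 1: pass to the free orbit.} The domain $D^q\times\mathrm{Pin}(2)$ is a free $\mathrm{Pin}(2)$-space. Equivariant maps out of $X\times\mathrm{Pin}(2)$, with $\mathrm{Pin}(2)$ acting on the right factor, correspond bijectively to ordinary maps out of $X$ by restriction to $X\times\{1\}$. This correspondence is compatible with smashing with $S^V$: an equivariant map
\begin{equation*}
    S^V\wedge X_+\wedge\mathrm{Pin}(2)_+\to S^V\wedge A
\end{equation*}
corresponds to a nonequivariant map $S^V\wedge X_+\to S^V\wedge A$. Indeed,
\begin{equation*}
    S^V\wedge X_+\wedge\mathrm{Pin}(2)_+\cong \mathrm{Pin}(2)_+\wedge(S^{|V|}\wedge X_+),
\end{equation*}
where $S^{|V|}$ denotes the underlying sphere. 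This is the untwisting isomorphism $(v,x,g)\mapsto(g,g^{-1}v,x)$. The same adjunction applies to homotopies, which are obtained by taking $X\times I$.

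After this translation, the problem becomes nonequivariant. We are given $\tilde f\colon S^n\wedge D^q_+\to S^n\wedge A$ with $n=\dim V$, restricting on $S^n\wedge\partial D^q_+$ to $\mathrm{id}\wedge\tilde g_+$. We seek an extension $f'\colon D^q\to A$ of $\tilde g$ such that $\mathrm{id}\wedge f'_+\simeq\tilde f$ relative to the boundary.

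There is one subtlety: the untwisting changes the $S^V$-factor by the action. I need to check that the correspondence sends $\mathrm{id}_{S^V}\wedge g_+$ to $\mathrm{id}\wedge\tilde g_+$. This follows from equivariance of $g$ and the formula above.

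\textbf{Step 2: nonequivariant relative Freudenthal.} Consider the fibration obtained by evaluating on $\partial D^q$. Lifting $\tilde g$ through the suspension map $\Sigma^n\colon\mathrm{Map}_*(D^q_+,A)\to\mathrm{Map}_*(S^n\wedge D^q_+,S^n\wedge A)$, taken relative to the boundary, is governed by the map
\begin{equation*}
    A\to\Omega^n\Sigma^n A.
\end{equation*}
By Freudenthal, this map is $(2p+1)$-connected when $A$ is $p$-connected.

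The obstruction to finding $f'$ together with the relative homotopy lies in the relative homotopy group
\begin{equation*}
    \pi_q\bigl(\Omega^n\Sigma^nA,A\bigr).
\end{equation*}
More concretely, I will treat the pair $(D^q,\partial D^q)$ as a single relative cell. The data $(\tilde f,\tilde g)$ defines an element of $\pi_q(\Omega^n\Sigma^nA,A)$ at the basepoint $\tilde g$. This group vanishes for $q\le 2p+1$, so in particular for $q\le 2p$. Vanishing yields a compression $f'$ into $A$ and a homotopy relative to $\partial D^q$.

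\textbf{Step 3: return to the equivariant setting.} Extend $f'$ and the homotopy equivariantly using Step 1.

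\textbf{Expected obstacle.} The main point of care is the connectivity bookkeeping. The relative group is $\pi_q(\Omega^n\Sigma^nA,A)$, which vanishes when $q\le 2p+1$. I also need to handle the basepoint issue: the map $\tilde g$ is unpointed on $\partial D^q$, which is why the $+$ appears in $D^q_+$. This is harmless because the pair $(\Omega^n\Sigma^nA,A)$ is at least $1$-connected, so free and based relative homotopy classes agree.
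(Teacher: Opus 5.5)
Your proposal is correct and follows essentially the same route as the paper: you reduce to the nonequivariant problem by restricting along the identity of $\mathrm{Pin}(2)$ (your untwisting isomorphism), pass to the adjoint $D^q\to\Omega^{n}\Sigma^{n}A$ whose boundary values factor through the unit $A\to\Omega^{n}\Sigma^{n}A$, use Freudenthal's vanishing of $\pi_q(\Omega^{n}\Sigma^{n}A,A)$ to compress rel $\partial D^q$, and then extend equivariantly again. Your observation that this relative group actually vanishes for $q\le 2p+1$ is correct, and the basepoint issue you flag is handled simply by basing the relative class at a point of $\partial D^q$, which is harmless because $A$ is path-connected.
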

\begin{proof}
    For any pointed $\mathrm{Pin}(2)$-spaces $Y$ and $Z$, restriction to the identity gives the usual identification
    \begin{equation}\label{equivariant non equivariant correspondence}
        \mathrm{Hom}_{\cat{Top}_*}^{\mathrm{Pin}(2)}(Y \wedge \mathrm{Pin(2)}_+, Z)  \cong \mathrm{Hom}_{\cat{Top}_*}(Y, Z)
    \end{equation}
    Under this identification, $f$ and $g$ correspond to nonequivariant maps
    \begin{equation*}
        f_{e}\colon S^{V}\wedge D^{q}_{+}\wedge \{e\}_{+}\to S^{V}\wedge A
    \end{equation*}
    and 
    \begin{equation*}
        g_{e}\colon \partial D^{q}_{+}\wedge \{e\}_{+}\to A.
    \end{equation*}
    The boundary condition $f|_{S^{V}\wedge  D^{q}_{+} \wedge  \mathrm{Pin}(2)_{+}} = \mathrm{id}_{S^{V}}\wedge g_+$ means that 
    \begin{equation*}
        f_e |_{ S^V \wedge \partial D^q_+} = \mathrm{id}_{S^V} \wedge (g_e)_+.
    \end{equation*}
    Since $\Omega$ and $\Sigma$ are mutually adjoint functors, we get
    \begin{equation*}
        \hat{f}_e\colon D^q \to \Omega^{\dim V} \Sigma^{\dim V} A.
    \end{equation*}
    If we write 
    \begin{equation*}
        \eta_A\colon A \to \Omega^{\dim V} \Sigma^{\dim V} A
    \end{equation*}
    for the unit of the suspension-loop adjunction (the adjoint to the identity map $\mathrm{id}_{\Sigma^{\dim V}}$), then our assumptions yield
    \begin{equation*}
        \hat{f}_e|_{\partial D^q} = \eta_A \circ g_e.
    \end{equation*}
    Now, since $A$ is $p$-connected and $q \leq 2p$, Freudenthal suspension theorem gives
    \begin{equation*}
        \pi_q( \Omega^{\dim V} \Sigma^{\dim V} A, A) = 0.
    \end{equation*}
    It follows that $\hat{f}_e$ is homotopic relative to $\partial D^q$ to a map
    \begin{equation*}
        f'_e \colon D^q \to A \quad \text{with } f'_e|_{\partial D^q} = g_e.
    \end{equation*}
    Passing back to a $\mathrm{Pin}(2)$-equivariant map via \eqref{equivariant non equivariant correspondence} yields the claim.
\end{proof}

\begin{proof}[Proof of Proposition \ref{prop: desuspension}]
    We pick a triangulation of $B$ and consider the induced filtration 
    \begin{equation*}
        \emptyset=B_{0}\subset B_{1}\subset B_{2}\subset \cdots \subset B_{m}=B,
    \end{equation*}
    where $B_{i}=B_{i-1}\cup_{\partial \Delta_{i}} \Delta_{i}$ is obtained by adjoining a simplex $\Delta_{i}$. 
    
    We will inductively construct $\mathrm{Pin}(2)_{\mathcal{L}|_{B_i}}$-equivariant maps    
    \begin{equation*}
        \mathsf{sw}_{i}\colon S^{\widetilde{\mathbb{R}}_{B_i}} \wedge_{B_i} S^{W^\perp|_{B_i}} \to S^{8\widetilde{\mathbb{R}}_{B_i}} 
    \end{equation*}
    such that
    \begin{itemize}
        \item $\mathsf{sw}_{i}$ extends $\mathsf{sw}_{i-1}$,
        \item The restriction of $\mathsf{sw}_{i}$ to the $S^{1}$-fixed points equals the inclusion $S^{\widetilde{\mathbb{R}}_{B_i}}\hookrightarrow S^{8\widetilde{\mathbb{R}}_{B_i}}$ and 
        \item The suspension of $\mathsf{sw}_{i}$ via $S^{(m-1)\widetilde{\mathbb{R}}_{B_i}} \wedge_{B_i} S^{W^-|_{B_i}}$ is $\mathrm{Pin}(2)_{\mathcal{L}|_{B_i}}$-equivariantly homotopic to $\mathsf{SW}^{\mathrm{app}}_{B_i, +}$ relative to the $S^1$-fixed locus.
    \end{itemize}

    Suppose that $\mathsf{sw}_{i-1}$ has been constructed. By the homotopy extension property, we may arrange that over $B_{i-1}$ the suspension of $\mathsf{sw}_{i-1}$ agrees with $\mathsf{SW}^{\mathrm{app}}_{+}$. Since $\Delta_i$ is contractible, we may choose trivializations
    \begin{equation*}
        \mathrm{Pin}(2)_{\mathcal{L}|_{\Delta_i}} \cong \Delta_i \times \mathrm{Pin}(2), \quad W^- |_{\Delta_i} \cong \Delta_i \times \mathbb{H}^r \quad \text{and } W^\perp |_{\Delta_i} \cong \Delta_i \times \mathbb{H}^2. 
    \end{equation*}
     After collapsing the sections, pointed maps over $\Delta_i$ are identified with ordinary pointed maps.\footnote{More precisely, if $\Delta$ is contractible and $E \in \Delta\backslash\mathbf{Top}/\Delta$ with section $s\colon \Delta \to E$, then pointed maps over $\Delta$ of the form $E \to \Delta \times A$ are naturally identified with pointed maps $E/s(\Delta) \to A$. The same holds for pointed homotopies.} Thus consider
     \begin{equation*}
         Y_1 = S^{\widetilde{\mathbb{R}}} \wedge \Delta_{i, +} \wedge S^{2\mathbb{H}} \quad \text{and} \quad Y_2= (S^{\widetilde{\mathbb{R}}} \wedge (\partial\Delta_i)_+ \wedge S^{2\mathbb{H}}) \cup (S^{\widetilde{\mathbb{R}}}  \wedge  \Delta_{i, +}).
     \end{equation*}
     Here the second term in $Y_2$ corresponds to the $S^1$-fixed locus, since $(S^{2 \mathbb{H}})^{S^1} = S^0$. The already constructed map $\mathsf{sw}_{i-1}$ gives a map on the first part of $Y_2$ while on the second part, we have the standard inclusion $S^{\widetilde{\mathbb{R}}} \hookrightarrow S^{8\widetilde{\mathbb{R}}}$. By the inductive hypothesis, these agree on their intersection and therefore determine a pointed $\mathrm{Pin}(2)$-equivariant map 
     \begin{equation*}
         g\colon Y_2 \to S^{8\widetilde{\mathbb{R}}}.
     \end{equation*}
     Using the homotopy extension property and the inductive hypothesis once again, we may arrange that, under the above identifications,
     \begin{equation*}
         \mathsf{SW}^{\mathrm{app}}_{B, +}\Big|_{S^{(m-1)\widetilde{\mathbb{R}}}\wedge S^{r \mathbb{H}}\wedge Y_2} = \mathrm{id}_{S^{(m-1)\widetilde{\mathbb{R}}} \wedge S^{r \mathbb{H}}} \wedge g.
     \end{equation*}
     Away from the $S^1$-fixed locus, the action of $\mathrm{Pin}(2)$ on $\mathbb{H}^2 \setminus \{ 0 \}$ is free. Hence the relative $\mathrm{Pin}(2)$-CW complex $(Y_1, Y_2)$ is obtained by attaching free cells $D^q \times \mathrm{Pin}(2)$. Since $\dim Y_1 = \dim \Delta_i +9$, these may be taken with $q \leq \dim \Delta_i +8 \leq 12$.
     
     We now attach these cells one at a time. On each cell the restriction of $\mathsf{SW}^{\mathrm{app}}_{B, +}$ has the form 
     \begin{equation*}
         f\colon S^V \wedge D^q_+ \wedge \mathrm{Pin}(2)_+ \to S^V \wedge S^{8 \widetilde{\mathbb{R}}},
     \end{equation*}
     where 
     \begin{equation*}
         V = (m-1) \widetilde{\mathbb{R}} \oplus r \mathbb{H}
     \end{equation*}
     and on the boundary 
     \begin{equation*}
         f|_{S^{V}\wedge  D^{q}_{+} \wedge  \mathrm{Pin}(2)_{+}} = \mathrm{id}_{S^{V}}\wedge h_+,
     \end{equation*}
     where the meaning of $h$ is as follows: if $Y \subset Y_1$ is the union of $Y_2$ with the cells already attached and $\varphi\colon \partial D^q \times \mathrm{Pin}(2) \to Y$ is the attaching map, then 
     \begin{equation*}
         h= \mathsf{sw}_Y \circ \varphi,
     \end{equation*}
     where $\mathsf{sw}_Y $ is the desuspended map constructed so far.
     
     Since $S^{8 \widetilde{\mathbb{R}}}$ is $7$-connected and $q \leq 12 \leq 14$, Lemma \ref{relative Freudenthal} provides an equivariant extension
     \begin{equation*}
         h'\colon D^q \times \mathrm{Pin}(2) \to S^{8\tilde{\mathbb{R}}}
     \end{equation*}
     with a $\mathrm{Pin}(2)$-equivariant homotopy 
     \begin{equation*}
         f \simeq \mathrm{id}_{S^V} \wedge h'_+ \quad \text{rel } S^V\wedge \partial D^q_+ \wedge \mathrm{Pin}(2)_+.
     \end{equation*}
     Thus $h'$ extends the desuspended map over the new cell while the relative homotopy extends the comparison homotopy with $\mathsf{SW}^{\textrm{app}}_{B, +}.$ Repeating this procedure over all the free cells of $(Y_1, Y_2)$, we obtain a pointed $\mathrm{Pin}(2)$-equivariant map
     \begin{equation*}
         \overline{\mathsf{sw}}_{\Delta_i}: Y_1 \to S^{8\widetilde{\mathbb{R}}}
     \end{equation*}
     Passing back to pointed spaces over $\Delta_i$, the map $\overline{\mathsf{sw}}_{\Delta_i}$ determines 
     \begin{equation*}
         \mathsf{sw}_{\Delta_i} \colon S^{\widetilde{\mathbb{R}}_{\Delta_i}} \wedge_{\Delta_i} S^{W^\perp |_{\Delta_i}} \to S^{8 \widetilde{\mathbb{R}}_{\Delta_i}}.
     \end{equation*}
     Since $\mathsf{sw}_{\Delta_i}$ agrees with $\mathsf{sw}_{i-1}$ over $\partial\Delta_i$, the maps glue. Continuing inductively over all simplices yields the required map $\mathsf{sw}_B$.
\end{proof}

We now derive from the existence of $\mathsf{sw}_B$ a $C_2$-equivariant consequence which will be contradicted in the next sections, ultimately leading to the conclusion of proof of Theorem \ref{mainthm: X non spin}.

We have a cofiber sequence of retractive $\m{Pin}(2)_{\mathcal L}$-spaces
\begin{equation*}
    S(W^\perp)_{+B}
    \rightarrow
    S^{0_B}
    \rightarrow
    S^{W^\perp}.
\end{equation*}
After smashing with $S^{\widetilde{\mathbb R}_B}$ and mapping into $S^{8\widetilde{\mathbb R}_B}$, the defining universal property of the cofiber shows that the existence of $\mathsf{sw}_B$ forces the composite
\begin{equation*}
    S^{\widetilde{\mathbb R}_B}
    \wedge_B S(W^\perp)_{+B}
    \rightarrow
    S^{\widetilde{\mathbb R}_B}
    \lhook\joinrel\longrightarrow
    S^{8\widetilde{\mathbb R}_B}
\end{equation*}
to be $\m{Pin}(2)_{\mathcal L}$-equivariantly null-homotopic over $B$; we refer the reader to \cite[Section 12]{CrabbJames1998} for a comprehensive treatment of fibrewise pointed cofibrations.

The canonical subgroup $\underline{S}^1_B\subset\m{Pin}(2)_{\mathcal L}$ acts freely on $S(W^\perp)$, and
\begin{equation*}
    P\coloneqq
    S(W^\perp)/\underline{S}^1_B
    \cong
    \mathbb P_{\mathbb C}(W^\perp)
\end{equation*}
is a $\mathbb{CP}^3$-bundle over $B$. Since $\m{Pin}(2)_{\mathcal L}/\underline{S}^1_B\cong\underline{C_2}_B$, the quotient $P$ carries a genuine global $C_2$-action.  Explicitly, if $c\in C_2$, $[v]\in P_b$, and $g\in\m{Pin}(2)_{\mathcal L}|_b$ is any lift of $c$, then
\begin{equation*}
    c\cdot[v]\coloneqq[g\cdot v].
\end{equation*}
This is independent of the choice of the lift because two lifts differ by an element of $S^1$, which does not change the complex line $[v]$.

Passing to the quotient of the preceding null-homotopy by $S^1$, and then collapsing the base against the trivial target sphere bundle, the representation $\widetilde{\mathbb R}$ becomes the sign representation $\sigma$ of $C_2$.  We obtain the following consequence.

\begin{lemma}
    \label{lem: sw map implies null homotopic}
    If the map $\mathsf{sw}_B$ exists, then the $C_2$-equivariant map
    \begin{align}
    \label{eq: P to S}
        f\colon
        S^\sigma\wedge P_+
        \xrightarrow{\m{id}\wedge q_P}
        S^\sigma\wedge S^0
        \cong S^\sigma
        \hookrightarrow
        S^{8\sigma}
    \end{align}
    is $C_2$-equivariantly null-homotopic.  Here $q_P\colon P_+\to S^0$ collapses $P$ to the non-basepoint.
\end{lemma}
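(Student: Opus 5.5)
The plan is to push the fibrewise null-homotopy already obtained from $\mathsf{sw}_B$ through two operations: first take $\underline{S}^1_B$-orbits, then collapse the base. Both operations are functorial for pointed equivariant maps over $B$ and preserve homotopies, so the null-homotopy passes through them.

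\textbf{Step 1: the fibrewise null-homotopy.} By the cofiber sequence $S(W^\perp)_{+B}\to S^{0_B}\to S^{W^\perp}$ (smashed with $S^{\widetilde{\mathbb R}_B}$), the existence of $\mathsf{sw}_B$ shows the following. The composite
\[
S^{\widetilde{\mathbb R}_B}\wedge_B S(W^\perp)_{+B}\to S^{\widetilde{\mathbb R}_B}\hookrightarrow S^{8\widetilde{\mathbb R}_B}
\]
is null-homotopic through pointed $\m{Pin}(2)_{\mathcal L}$-equivariant maps over $B$. Explicitly, write $H\colon S^{\widetilde{\mathbb R}_B}\wedge_B S(W^\perp)_{+B}\wedge I_+\to S^{8\widetilde{\mathbb R}_B}$ for the homotopy. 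It comes from restricting $\mathsf{sw}_B$ along the cone on $S(W^\perp)$, i.e.\ the radial coordinate of the disc bundle $D(W^\perp)$. The map at time $0$ is the composite above, and the map at time $1$ is constant at the section. Property (1) of Proposition~\ref{prop: desuspension} gives exactly the time-$0$ identification with the standard inclusion.

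\textbf{Step 2: pass to $S^1$-orbits.} The subgroup $\underline S^1_B$ acts trivially on $S^{\widetilde{\mathbb R}_B}$ and on $S^{8\widetilde{\mathbb R}_B}$, and freely on $S(W^\perp)$. Therefore the equivariant map $H$ factors through the orbit space of its source. That orbit space is $S^{\widetilde{\mathbb R}_B}\wedge_B P_{+B}\wedge I_+$, because $S^1$-orbits commute with fibrewise smash products against spaces on which $S^1$ acts trivially. The residual action of $\m{Pin}(2)_{\mathcal L}/\underline S^1_B\cong\underline{C_2}_B$ is the global $C_2$-action on $P$ described above. On $\widetilde{\mathbb R}_B$ and $8\widetilde{\mathbb R}_B$ this residual action is through $\epsilon$, so it is the constant sign representation, giving $\sigma\times B$ and $8\sigma\times B$.

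\textbf{Step 3: collapse the base.} The target $S^{8\sigma}\times B$ is a trivial sphere bundle. Composing with the projection $S^{8\sigma}\times B\to S^{8\sigma}$ and collapsing the section of the source yields a pointed $C_2$-map
\[
S^\sigma\wedge P_+\wedge I_+\to S^{8\sigma}.
\]
Here I use that $(S^{\sigma}\times B)\wedge_B P_{+B}$, modulo its section, equals $S^\sigma\wedge P_+$. At time $0$ this map is $f$, and at time $1$ it is constant, which proves the lemma.

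The only point needing care is that the trivializations $\widetilde{\mathbb R}_B\cong\sigma\times B$ are $C_2$-equivariant. They are, because the action on forms factors through $\epsilon$, which the paper has shown to be canonically untwisted.
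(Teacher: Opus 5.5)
Your proposal is correct and follows the same route as the paper. The cofiber sequence $S(W^\perp)_{+B}\to S^{0_B}\to S^{W^\perp}$ gives a $\mathrm{Pin}(2)_{\mathcal L}$-equivariant null-homotopy over $B$; you pass it to $\underline{S}^1_B$-orbits and then collapse against the trivial target sphere bundle $S^{8\sigma}\times B$. Your explicit null-homotopy $x\wedge v\mapsto \mathsf{sw}_B(x\wedge tv)$, together with the check via property (1) of Proposition~\ref{prop: desuspension} that its time-$0$ map is exactly $f$, just spells out details the paper leaves implicit.
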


The remainder of the proof will contradict Lemma \ref{lem: sw map implies null homotopic} by showing that $f$ induces a nonzero map in $C_2$-equivariant $K$-theory.

\subsection{Equivariant $K$-theory of $\mathbb{P}_{\mathbb{C}}(W^{\perp})$}
\label{Equivariant K-theory of PcW}

Note that while the $\Pin(2)_{\mathcal{L}}$-action on $W^{\perp}$ only exists in in $\cat{Top}/B$ , the induced $\underline{C_2}_B$-action on $P$ descends to a $C_2$-action in $\cat{Top}$. To prove Theorem \ref{mainthm: X non spin}, we will show that that composition \eqref{eq: P to S} is not $C_2$-equivariantly null-homotopic, by proving that the induced map on equivariant $K$-theory is nontrivial.

We introduce some notation. Let $A$ be a pointed space with a $C_2$-action. For $p,q\in \mathbb{Z}$ We define 
\begin{equation*}
\widetilde{K}^{p+q\sigma}_{C_2}(A)\coloneqq \begin{cases}
\widetilde{K}_{C_2}(A) &\text{ if $p$ even, $q$ even}\\
\widetilde{K}_{C_2}(S^{\sigma}\wedge A) &\text{ if $p$ even, $q$ odd}\\
\widetilde{K}_{C_2}(S^1\wedge A) &\text{ if $p$ odd, $q$ even}\\
\widetilde{K}_{C_2}(S^1\wedge S^{\sigma}\wedge A) &\text{ if $p$ odd, $q$ odd}
\end{cases}.
\end{equation*}
We set $K^{p+q\sigma}_{C_2}(A)\coloneqq \widetilde{K}^{p+q\sigma}_{C_2}(A_{+})$. For a $C_2$-subspace $A'$ of $A$ and a class $c\in K^{p+q\sigma}_{C_2}(A)$, we use $c|_{A'}\in K^{p+q\sigma}_{C_2}(A')$ to denote the pull-back of $c$. For any pointed $C_2$-space $A$, we consider the cofiber sequence 
\begin{equation*}
\cdots \to S(\sigma)_{+}\wedge A\to S^0\wedge A\to S^{\sigma}\wedge A\to  S^1\wedge S(\sigma)_{+}\wedge A\to S^1\wedge A\to S^1\wedge S^{\sigma}\wedge A\to\cdots
\end{equation*}
Note that from the natural equivalence $\widetilde{K}_{C_2}(S(\sigma)_{+}\wedge -)\cong \widetilde{K}(-)$, we obtain the exact sequence 
\begin{equation}
\label{eq: exact 6-gon v1}
\begin{tikzcd}
	& {\widetilde{K}^\sigma_{C_2}(A)} & {\widetilde{K}_{C_2}(A)} & \\
	{\widetilde{K}^1_{C_2}(A)} &&& {\widetilde{K}(A)} \\
	& {\widetilde{K}^1_{C_2}(A)} & {\widetilde{K}^{1+\sigma}_{C_2}(A)}
	\arrow[from=1-2, to=1-3]
	\arrow[from=1-3, to=2-4]
	\arrow["{\mathsf{tran}}"{description}, from=2-1, to=1-2]
	\arrow[from=2-4, to=3-3]
	\arrow[from=3-2, to=2-1]
	\arrow[from=3-3, to=3-2]
\end{tikzcd}
\end{equation}

We call the map $\mathsf{tran}\colon\widetilde{K}^{1}(A)\to \widetilde{K}^{\sigma}_{C_2}(A)$ the \textbf{transfer map}.

\subsubsection{The projective bundle tower}
A key step in our argument is to understand the group $K^{\sigma}_{C_2}(P)$. For this, we further explore the fibration structure of $P$. Since $W^{\perp}$ is an $\mathbb{H}_{\mathcal{L}}$-module, we have a $\m{Sp}(1)_{\mathcal{L}}$-action on $S(W^\perp)$, where $\m{Sp}(1)_{\mathcal{L}} \coloneq P_{\mathcal{L}} \times_{\vartheta} \mathrm{Sp}(1) \subset \mathbb{H}_{\mathcal{L}}$. We denote the orbit space by 
\begin{equation*}
    \pi_2: P_{\mathbb{H}} \coloneqq \mathbb{P}_{\mathbb{H}}(W^{\perp}) \cong S(W^\perp)/ \m{Sp}(1)_{\mathcal{L}}  \rightarrow B.
\end{equation*}
Moreover, let
\begin{equation*}
    \pi_1: P \to P_{\mathbb{H}}
\end{equation*}
to be the natural quotient map. We have the following lemma.
\begin{lemma}
    \label{lem: P(W) bundle structure}
    The following holds:
    \begin{enumerate}
        \item We have a fiber bundle  $S^2\hookrightarrow P\to P_{\mathbb{H}}$ whose structure group is $\m{SO}(3)$.
        \item Under this fibration structure on $P$, the $C_2$-action is exactly the fiberwise antipodal map.
    \end{enumerate}
\end{lemma}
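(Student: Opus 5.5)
The plan is to prove both statements fiberwise over $B$ and then check that the local descriptions glue, using the twisting formalism of Subsection \ref{Twisted family spin structures}.

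\textbf{Local model.} Over a point $b\in B$, choose a unit vector in $\mathcal L_b$. This identifies $(\mathbb H_{\mathcal L})_b\cong\mathbb H$ and $W^\perp_b\cong\mathbb H^2$, which we regard as a right $\mathbb H$-module. The fiber of $\pi_2$ over $b$ is then $\mathbb{HP}^1\cong S^4$. The fiber of $\pi_1$ over a quaternionic line $\ell=v\mathbb H$ is
\begin{equation*}
S(\ell)/S^1=\{vq:|q|=1\}/S^1\cong \mathrm{Sp}(1)/U(1)\cong S^2.
\end{equation*}
Concretely, $\mathbb P_{\mathbb C}(\ell)$ is the set of complex lines in the $2$-dimensional complex space $\ell$, that is, $\mathbb{CP}^1$. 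So $\pi_1$ is a fiber bundle with fiber $S^2$: it is $S(W^\perp)\times_{\mathrm{Sp}(1)_{\mathcal L}}(\mathrm{Sp}(1)/U(1))$, an associated bundle over $P_{\mathbb H}$. Local triviality follows from local triviality of $S(W^\perp)\to P_{\mathbb H}$ as a principal $\mathrm{Sp}(1)$-bundle. That principal structure is locally available once $\mathcal L$ is trivialized over an open set of $B$.

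\textbf{Structure group.} Identify the fiber $\mathbb P_{\mathbb C}(\ell)$ with the unit sphere in the $3$-dimensional space of trace-free skew-Hermitian endomorphisms $J$ of $\ell$ satisfying $J^2=-1$ and commuting with the right $\mathbb H$-action. The complex structures on $\ell$ compatible with the quaternionic module structure are exactly given by left multiplication by unit imaginary quaternions in $\mathbb H_{\mathcal L}$. Equivalently, they form the unit sphere in $\mathrm{Im}(\mathbb H_{\mathcal L})$ transported to $\ell$. Hence
\begin{equation*}
P\cong S\big(\mathrm{End}^{\mathrm{sk},0}_{\mathbb H}(\ell_{\mathrm{taut}})\big),
\end{equation*}
the unit sphere bundle of a rank-$3$ real oriented vector bundle over $P_{\mathbb H}$. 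This gives structure group $\mathrm{SO}(3)$. A more hands-on alternative is to note that the transition functions act on $\mathrm{Sp}(1)/U(1)=S^2$ through $\mathrm{Sp}(1)\to\mathrm{SO}(3)$ and through the twist $\vartheta$. Since $\vartheta(z)$ is conjugation by $e^{i\theta/2}\in\mathrm{Sp}(1)$, it also lies in $\mathrm{SO}(3)$.

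\textbf{The $C_2$-action.} Locally, the nontrivial element of $C_2$ is represented by $j$, acting as $v\mapsto vj$ or $jv$ according to the module convention. This preserves each quaternionic line, so the action covers the identity of $P_{\mathbb H}$. On a fiber $\ell\cong\mathbb H$ with complex lines indexed by $S^2\subset\mathrm{Im}\,\mathbb H$ via $q\mapsto q\,i\,\bar q$, the element $j$ sends $q\mapsto qj$, so
\begin{equation*}
qj\,i\,\bar j\bar q=-q\,i\,\bar q .
\end{equation*}
Thus the action is the antipodal map, which is well defined independent of local choices and is a fiberwise isometry.

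The main obstacle is bookkeeping of left versus right module conventions, together with checking that the identification with the unit sphere bundle of a rank-$3$ bundle is global despite the twist by $\mathcal L$. I would handle this by showing that all choices differ by elements of $\mathrm{Sp}(1)_{\mathcal L}$, which acts on the $S^2$-fibers via $\mathrm{SO}(3)$ and commutes with $-1$.
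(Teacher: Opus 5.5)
Your proposal is correct and follows essentially the same route as the paper: the fibers are $\mathrm{Sp}(1)/U(1)\cong S^2$, and the transition functions act by conjugation, i.e.\ through $\mathrm{SO}(3)$. This covers both the $\mathrm{Sp}(1)$ changes of local section and the twist $\vartheta$. The element $j$ acts antipodally, which you verify explicitly where the paper only asserts it. Your global description $P=\mathbb{P}_{\mathbb{C}}(\ell_{\mathrm{taut}})\cong S(\mathfrak{su}(\ell_{\mathrm{taut}}))$ is a nice addition, since it makes the $\mathrm{SO}(3)$-reduction manifestly global.

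One caution: your gloss identifying the fibers with the unit sphere of $\mathrm{Im}(\mathbb{H}_{\mathcal{L}})$ is only pointwise, because it depends on a choice of basis vector of $\ell$. Globally it fails: $\pi_2^*\mathrm{Im}(\mathbb{H}_{\mathcal{L}})$ is pulled back from $B$, whereas $P$ restricted to an $S^4$-fiber of $\pi_2$ is the nontrivial twistor fibration $\mathbb{CP}^3\to S^4$. The correct global rank-$3$ bundle is therefore the one in your display, $\mathfrak{su}(\ell_{\mathrm{taut}})$.
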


\begin{proof} 
The fibers of $\pi_1$ are the complex lines in the quaternionic space, hence $\m{Sp}(1)/S^1\cong S^2$. The transition functions act on this $S^2$ through $\m{Aut}_{\mathbb{R}\m{alg}}(\mathbb{H}) \cong \m{SO}(3)$, so $\pi_1: P \to P_{\mathbb{H}}$ is an $S^2$-bundle with structure group $\m{SO}(3)$. Under this identification, the nontrivial element of $C_2$ acts by the antipodal map on $S^2$.
\end{proof}
Because of Lemma \ref{lem: P(W) bundle structure}, we will denote the fiber $S^2$ by $S(3\sigma)$, the unit sphere in $3\sigma$, to emphasize that the $C_2$-action is the antipodal map. 

Note the bundle structure $\mathbb{CP}^{3}\hookrightarrow P\to B$. The bundle map $\pi_0\colon P\to B$ descends to a map $\pi_{2}\colon P_{\mathbb{H}}\to B$, which is a fiber bundle with fiber $S^4$. Since $\dim B =4$ and $\pi_i(S^4) = 0$, for $i \leq 3$, obstruction theory gives a section $s: B \to P_{\mathbb{H}}$. The section $s$ determines an $\mathbb{H}_{\mathcal{L}}$-line subbundle $W^{\perp}_0\subset W^{\perp}$. 

We have decomposed the bundle map $\pi_0\colon P\to B$ into the following fibration tower
\begin{equation}
\label{eq: fibration tower}
\begin{tikzcd}
	{S(3\sigma)} && P \\
	\\
	{S^4} && {P_{\mathbb{H}}} \\
	\\
	&& B
	\arrow["{i_1}"{description}, hook, from=1-1, to=1-3]
	\arrow["{\pi_1}"{description}, from=1-3, to=3-3]
	\arrow["{\pi_0}"{description}, curve={height=-18pt}, from=1-3, to=5-3]
	\arrow["{i_2}"{description}, hook, from=3-1, to=3-3]
	\arrow["{\pi_2}"{description}, from=3-3, to=5-3]
	\arrow["s"{description}, curve={height=-18pt}, from=5-3, to=3-3]
\end{tikzcd}
\end{equation}

\subsubsection{The equivariant Atiyah--Hirzebruch spectral sequence}

Now we consider the Atiyah--Hirzebruch spectral sequence associated to the bundle $\pi_1$. Since the structure group $\m{SO}(3)$ is connected, its action on the $K$-groups of the fiber is trivial, so the associated local coefficient systems are constant. We write
\begin{equation}\label{eq: AHSS}
    E^{p,q}_{2}(\pi_1)=H^{p}(P_{\mathbb{H}};K_{C_2}^{q+\sigma}( S(3\sigma)))\implies K^{p+q+\sigma}_{C_2}(P).
\end{equation}

\begin{lemma}\label{lem: K(3sigma)}
There exist isomorphisms 
\begin{equation*}
    K_{C_2}^{q+\sigma}( S(3\sigma))\cong \widetilde{K}^{q}(\mathbb{RP}^3)\cong \begin{cases} \mathbb{Z}_2 &\text{ when $q$ is even};\\
\mathbb{Z}&\text{ when $q$ is odd}. 
\end{cases}
\end{equation*}
\end{lemma}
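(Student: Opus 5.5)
The plan has three steps: pass from $C_2$-equivariant to ordinary $K$-theory using freeness, identify the resulting twisted group with reduced $K$-theory of $\mathbb{RP}^3$, and compute that directly.

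\textbf{Step 1: reduce to the free quotient.} The antipodal action of $C_2$ on $S(3\sigma)$ is free, with orbit space $\mathbb{RP}^2$. For free actions, equivariant $K$-theory is ordinary $K$-theory of the quotient, and a twist by $\sigma$ becomes a twist by the real line bundle $\gamma=S(3\sigma)\times_{C_2}\sigma$, the tautological bundle over $\mathbb{RP}^2$. Unwinding the definition of $K^{q+\sigma}_{C_2}$ as $\widetilde{K}_{C_2}$ of $S^\sigma\wedge S(3\sigma)_+$, possibly with an extra $S^1$, we get
\begin{equation*}
K^{q+\sigma}_{C_2}(S(3\sigma))\cong \widetilde{K}^{q}\bigl(\mathrm{Th}(\gamma\to\mathbb{RP}^2)\bigr).
\end{equation*}

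\textbf{Step 2: identify the Thom space with $\mathbb{RP}^3$.} The Thom space of the tautological line bundle over $\mathbb{RP}^n$ is $\mathbb{RP}^{n+1}$. Indeed, $\mathbb{RP}^{n+1}$ minus a point deformation retracts onto $\mathbb{RP}^n$, and that complement is the total space of $\gamma$. So $\mathrm{Th}(\gamma)\cong\mathbb{RP}^3$ and the first isomorphism follows. This is the one step where care is needed: the identification of $\widetilde{K}_{C_2}(S^{\sigma}\wedge Y_+)$ with $\widetilde K$ of the Thom space of the associated bundle must be made for the right twist, with the correct grading conventions. As a cross-check I would also read it off from the six-term sequence \eqref{eq: exact 6-gon v1} applied to $A=S(3\sigma)_+$, using $K_{C_2}(S(3\sigma))\cong K(\mathbb{RP}^2)$.

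\textbf{Step 3: compute $\widetilde K^{*}(\mathbb{RP}^3)$.} Classically $\widetilde{K}^0(\mathbb{RP}^{3})\cong\mathbb{Z}_2$, generated by the reduced complexified tautological line bundle; Adams gives $\mathbb{Z}/2^{\lfloor 3/2\rfloor}$. Also $K^1(\mathbb{RP}^3)\cong\mathbb{Z}$, coming from orientability of $\mathbb{RP}^3$ and its top cell. Alternatively, the Atiyah--Hirzebruch spectral sequence for $\mathbb{RP}^3$ settles both groups: it collapses for dimensional reasons, giving $\widetilde K^0$ of order $2$ from $H^2(\mathbb{RP}^3;\mathbb{Z})=\mathbb{Z}_2$, and $K^1$ from $H^1=0$ and $H^3=\mathbb{Z}$. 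Finally, Bott periodicity gives the stated two-periodicity in $q$.
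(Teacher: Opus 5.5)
Your proposal is correct and follows essentially the same route as the paper. The paper also identifies $(S^{\sigma}\wedge S(3\sigma)_+)/C_2$ with $\mathbb{RP}^3$; your description of it as the Thom space of the tautological line bundle over $\mathbb{RP}^2$ just spells that out. The only difference is that the paper computes $\widetilde{K}^{*}(\mathbb{RP}^3)$ from the stable splitting $\mathbb{RP}^3\simeq_s S^3\vee\mathbb{RP}^2$, whereas you use Adams' computation or the Atiyah--Hirzebruch spectral sequence, which is an inessential variation.
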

\begin{proof}
The first isomorphism follows from the fact that 
$(S^{\sigma}\wedge S(3\sigma)_{+})/C_2\cong \mathbb{RP}^3$; the second isomorphism follows from the fact that $\mathbb{RP}^3$ is stably homotopy equivalent to $S^3\vee \mathbb{RP}^2$.
\end{proof}

Since $H^\bullet(B;\mathbb{Z})$ is supported even gradings, the Serre spectral sequence for $\pi_{2}$ collapses on the $E_2$-page. Hence there is a unique class $b\in H^{4}(P_{\mathbb{H}};\mathbb{Z})$ such that $i^*_{2}(b)$ is the generator of $H^{4}(S^4;\mathbb{Z})$ and $s^*(b)=0$. For $R=\mathbb{Z}$ or $\mathbb{Z}_2$, by the Leray-Hirsch theorem, we have the isomorphism
\begin{equation*}
H^\bullet(P_{\mathbb{H}};R)\cong H^\bullet(B;R)\langle 1, b\rangle
\end{equation*}
as modules over $H^\bullet(B;R)$. By Lemma \ref{lem: K(3sigma)}, we conclude the following corollary.

\begin{cor}
The second spectral page of the Atiyah--Hirzebruch spectral sequence associated to $\pi_1$ is isomorphic
\begin{equation*}
   E^{p,q}_{2}(\pi_1)\cong\begin{cases}
\mathbb{Z}_2\quad &p\in \{0,8\}, q\in 2\mathbb{Z}\\
\mathbb{Z}^{\oplus k+l}_2\quad &p\in \{2,6\}, q\in 2\mathbb{Z}\\
\mathbb{Z}_2\oplus \mathbb{Z}_2\quad &p=4, q\in 2\mathbb{Z}\\
\mathbb{Z}\quad &p\in \{0,8\}, q\in 2\mathbb{Z}+1\\
\mathbb{Z}^{\oplus k+l}\quad &p\in \{2,6\}, q\in 2\mathbb{Z}+1\\
\mathbb{Z}\oplus \mathbb{Z}\quad &p=4, q\in 2\mathbb{Z}+1\\
\end{cases} 
\end{equation*}
\end{cor}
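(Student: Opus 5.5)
The plan is to compute $E_2^{p,q}(\pi_1)=H^p(P_{\mathbb H};M_q)$ with $M_q\coloneqq K^{q+\sigma}_{C_2}(S(3\sigma))$ directly from the two inputs already in place. Lemma~\ref{lem: K(3sigma)} gives $M_q\cong\mathbb Z_2$ for $q$ even and $M_q\cong\mathbb Z$ for $q$ odd. Since the structure group $\m{SO}(3)$ of $\pi_1$ is connected, the local system is constant, and so $E_2^{p,q}$ is ordinary cohomology of $P_{\mathbb H}$ with coefficients $\mathbb Z_2$ or $\mathbb Z$ according to the parity of $q$. The remaining step is to compute $H^\bullet(P_{\mathbb H};\mathbb Z)$ and $H^\bullet(P_{\mathbb H};\mathbb Z_2)$ degree by degree.

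For this I will use the Leray--Hirsch description stated just before the corollary: $H^\bullet(P_{\mathbb H};R)\cong H^\bullet(B;R)\langle 1,b\rangle$ with $|b|=4$, for $R\in\{\mathbb Z,\mathbb Z_2\}$. Here $B=(\#^k\mathbb{CP}^2)\#(\#^l\overline{\mathbb{CP}}^2)$ after the reduction in Subsection~\ref{Cohomological reduction for n=2}, so $H^\bullet(B;R)$ is $R$ in degrees $0$ and $4$, $R^{k+l}$ in degree $2$, and zero otherwise; in particular it is free and concentrated in even degrees. Adding the shifted copy $b\cdot H^\bullet(B;R)$ then gives
\begin{equation*}
H^p(P_{\mathbb H};R)\cong\begin{cases} R & p\in\{0,8\},\\ R^{k+l} & p\in\{2,6\},\\ R\oplus R & p=4,\\ 0 & \text{otherwise.}\end{cases}
\end{equation*}
Substituting $R=\mathbb Z_2$ for even $q$ and $R=\mathbb Z$ for odd $q$ produces exactly the table in the corollary, with all rows vanishing for odd $p$.

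The step that needs care is justifying that $H^\bullet(P_{\mathbb H};\mathbb Z_2)$ is given by Leray--Hirsch with $\mathbb Z_2$ coefficients, and that no universal coefficient $\mathrm{Tor}$ terms appear. I will argue that the Serre spectral sequence of $S^4\to P_{\mathbb H}\to B$ has $E_2^{p,q}=H^p(B;R)\otimes H^q(S^4;R)$, because $B$ is simply connected and both cohomologies are free. Every nonzero entry sits in even total degree, and every differential $d_r$ has bidegree $(r,1-r)$, so it changes total degree by one; hence all differentials vanish. Alternatively, the integral class $b$ reduces mod $2$ to a class restricting to the generator on each fiber, which is enough for Leray--Hirsch over $\mathbb Z_2$. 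Integral freeness of the result also follows from the collapse together with freeness of the $E_2$-page. With this in hand the computation is purely bookkeeping.
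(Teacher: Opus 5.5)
Your proposal is correct and follows the paper's own route: you use Leray--Hirsch for the $S^4$-bundle $\pi_2\colon P_{\mathbb{H}}\to B$, justified as in the paper by the even-degree collapse of the Serre spectral sequence, to get $H^\bullet(P_{\mathbb{H}};R)\cong H^\bullet(B;R)\langle 1,b\rangle$ for $R=\mathbb{Z},\mathbb{Z}_2$, and then substitute the coefficients from Lemma~\ref{lem: K(3sigma)} into the constant coefficient system. Your remarks on mod-$2$ coefficients and integral freeness just make explicit what the paper leaves implicit.
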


\begin{lemma}\label{lem: AHSS collapses}
The spectral sequence \eqref{eq: AHSS} collapses on the second page.     
\end{lemma}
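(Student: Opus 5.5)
The plan is to show that every differential vanishes. First I use parity to reduce to differentials leaving the bottom row $p=0$. Then I show those vanish by producing global classes on $P$ that restrict to generators of the fibre $K$-groups. Multiplicativity of the spectral sequence over the base then propagates this to all of $E_2(\pi_1)$.

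\emph{Parity reduction.} By the preceding corollary, $E_2^{p,q}(\pi_1)$ vanishes for $p$ odd, because $H^\bullet(P_{\mathbb H};\mathbb Z)$ is free and concentrated in even degrees. A differential $d_r\colon E_r^{p,q}\to E_r^{p+r,q-r+1}$ with $r$ odd therefore has zero target. For $r$ even, $d_r$ changes the parity of $q$, so it goes either from a $\mathbb Z_2$-group to a $\mathbb Z$-group or from a $\mathbb Z$-group to a $\mathbb Z_2$-group. The first kind vanishes automatically. The real task is to exclude nonzero differentials $\mathbb Z\to\mathbb Z_2$, and more generally to show that all of $E_2$ consists of permanent cycles.

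\emph{Reduction to the bottom row.} The spectral sequence \eqref{eq: AHSS} is a module over the Atiyah--Hirzebruch spectral sequence for $K^\ast(P_{\mathbb H})$, through pullback along $\pi_1$ and the product $K^\ast(P_{\mathbb H})\otimes K^{\ast+\sigma}_{C_2}(P)\to K^{\ast+\sigma}_{C_2}(P)$, where $P_{\mathbb H}$ carries the trivial $C_2$-action. The spectral sequence for $P_{\mathbb H}$ itself collapses because its cohomology is even and torsion-free. Since $H^\bullet(P_{\mathbb H};\mathbb Z)$ is free, the coefficient groups of \eqref{eq: AHSS} are $K^{q+\sigma}_{C_2}(S(3\sigma))$ by Lemma \ref{lem: K(3sigma)}, and the universal coefficient theorem gives
\[
E_2^{p,q}(\pi_1)\cong H^p(P_{\mathbb H};\mathbb Z)\otimes E_2^{0,q}(\pi_1).
\]
By the Leibniz rule it therefore suffices to show that $E_2^{0,q}(\pi_1)$ consists of permanent cycles. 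Equivalently, I must show that restriction to a fibre,
\[
i_1^*\colon K^{q+\sigma}_{C_2}(P)\to K^{q+\sigma}_{C_2}(S(3\sigma)),
\]
is surjective for $q$ even and for $q$ odd.

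\emph{Constructing global lifts.} I use the exact hexagon \eqref{eq: exact 6-gon v1} naturally for the fibre inclusion $i_1$, applied with $A=P_+$ and $A=S(3\sigma)_+$. On the fibre we have $K_{C_2}(S(3\sigma))\cong K(\mathbb{RP}^2)\cong\mathbb Z\oplus\mathbb Z_2$, $K^1_{C_2}(S(3\sigma))\cong K^1(\mathbb{RP}^2)=0$ and $K^1(S^2)=0$.

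For $q$ even, exactness identifies $K^\sigma_{C_2}(S(3\sigma))$ with the kernel of the forgetful map $K_{C_2}(S(3\sigma))\to K(S^2)$. That kernel is $\mathbb Z_2$, generated by $[\sigma]-1$, which corresponds to the tautological line minus $1$ on $\mathbb{RP}^2$. Globally, the class $[\underline{\sigma}]-1\in K_{C_2}(P)$ of the trivial line bundle with the sign action is forgotten to $0$ in $K(P)$. By exactness it lifts to $K^\sigma_{C_2}(P)$, and by naturality the lift restricts to the fibre generator up to the indeterminacy $\mathsf{tran}(K^1(S^2))=0$.

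For $q$ odd, exactness together with $K^1_{C_2}(S(3\sigma))=0$ identifies $K^{1+\sigma}_{C_2}(S(3\sigma))$ with the cokernel of the forgetful map $K(\mathbb{RP}^2)\to K(S^2)$. That cokernel is $\mathbb Z$, generated by the image of $H-1$. Globally, $P=\mathbb P_{\mathbb C}(W^\perp)$ carries its tautological complex line bundle $\mathcal O(-1)$. This restricts on each fibre $S^2=\mathbb P_{\mathbb C}(\mathbb H)\cong\mathbb{CP}^1$ of $\pi_1$ to the tautological bundle $H$. The image of $[\mathcal O(-1)]-1$ under $K(P)\to K^{1+\sigma}_{C_2}(P)$ therefore restricts to the fibre generator.

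Periodicity handles the remaining $q$, which proves surjectivity and hence collapse. The step I expect to need the most care is the multiplicative and module structure. I have to confirm that \eqref{eq: AHSS} genuinely admits the pairing with $K^\ast(P_{\mathbb H})$ compatibly with the filtrations, and that the torsion coefficients do not spoil the identification $E_2^{p,q}\cong H^p\otimes E_2^{0,q}$. I also have to check that the hexagon maps are natural and commute with restriction to fibres.
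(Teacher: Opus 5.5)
Your proof is correct, but it takes a different route from the paper's.

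\textbf{What the paper does.} The paper uses the same torsion-into-free parity argument as you for $q$ even. For $q$ odd, it compares \eqref{eq: AHSS} with the non-equivariant Atiyah--Hirzebruch spectral sequence for $K^{\bullet+1}(P)$ over $P_{\mathbb H}$, via the hexagon map $\mathsf{tran}$. That non-equivariant sequence collapses by parity. The induced morphism of spectral sequences is surjective on $E_2^{p,q}$ for $q$ odd, because $K(S^2)\to K^{1+\sigma}_{C_2}(S(3\sigma))$ is onto. Hence every differential out of an odd row is the image of a zero differential.

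\textbf{What you do.} You prove a Leray--Hirsch statement instead: restriction to a fibre is surjective in every degree. For $q$ even you use the lift of $[\sigma_{\mathbb C}]-1$. Since $K^1(P)=0$, this lift is unique and equals, up to sign, the class $q_P^*\theta$ that the paper only introduces later in Proposition~\ref{prop: f induces nontrivial map on K theory}. For $q$ odd you use the image of $[\mathcal O(-1)]-1$ under $K(P)\to K^{1+\sigma}_{C_2}(P)$. You then propagate to all of $E_2$ through the module structure over the collapsing spectral sequence of $P_{\mathbb H}$, together with $E_2^{p,q}\cong H^p(P_{\mathbb H};\mathbb Z)\otimes E_2^{0,q}$.

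\textbf{How the two compare.}
\begin{itemize}
\item Your $q$-odd step rests on exactly the same surjection $K(S^2)\twoheadrightarrow K^{1+\sigma}_{C_2}(S(3\sigma))$ as the paper's.
\item The paper needs only that $\mathsf{tran}$ is a natural transformation compatible with the skeletal filtration. You need the pairing of spectral sequences and the Leibniz rule. That is standard (the spectral sequence of a map is a module over the Atiyah--Hirzebruch spectral sequence of the base), so it is not a gap, but it is extra machinery.
\item In exchange you get more. Your global classes restricting to fibre generators directly supply the class $\gamma$ with $\gamma|_{R'}\neq 0$ that the torsion estimates assume. They also give a Leray--Hirsch-type description of $K^{\bullet+\sigma}_{C_2}(P)$ over $K^\bullet(P_{\mathbb H})$.
\item Once that argument is in place, your preliminary parity reduction is no longer needed.
\end{itemize}
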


\begin{proof}
Let $E^{p, q}_r(\pi_1)$ denote the equivariant spectral sequence \eqref{eq: AHSS}. Since $H^*(P_{\mathbb{H}}; \mathbb{Z})$ is concentrated in even degrees, a potentially nonzero differential $d_r$ must have $r$ even. If $q$ is even, then $E^{p,q}_r(\pi_1)$ is torsion while $E^{p+r,q+1-r}_r(\pi_1)$ is free, so such a differential vanishes. It therefore remains to consider differentials with $q$ odd.

Consider the ordinary (that is: not equivariant) Atiyah--Hirzebruch spectral sequence
\begin{equation*}
    \bar{E}^{p, q}_2(\pi_1) = H^p(P_{\mathbb{H}}; K^{q+1}(S^2)) \implies K^{p+q+1}(P).
\end{equation*}
This spectral sequence collapses on the second page by parity. The transfer map from \eqref{eq: exact 6-gon v1}
\begin{equation*}
    \mathsf{tran}\colon K^{\bullet+1}(P)\to K^{\bullet+\sigma}_{C_2}(P)  
\end{equation*}
induces a natural morphism
\begin{equation*}
    \bar{E}^{p, q}_r(\pi_1) \longrightarrow E^{p, q}_r(\pi_1).
\end{equation*}
For odd $q$, the corresponding map on the fiber coefficients is surjective. Indeed, on the reduced summand it is identified with 
\begin{equation*}
    \widetilde{K}^{q}(\mathbb{RP}^{3}/\mathbb{RP}^{2})\to \widetilde{K}^{q}(\mathbb{RP}^{3}),
\end{equation*}
induced by the quotient map $\mathbb{RP}^{3} \to \mathbb{RP}^{3}/\mathbb{RP}^{2}$ and this map is an isomorphism for odd $q$.

Suppose that $d_r$ is the first nonzero differential on the $r$-th page, and let $x \in E^{p, q}_r(\pi_1)$ with $q$ odd. By the preceding surjectivity, $x$ lifts to some $\bar{x} \in \bar{E}^{p, q}_r(\pi_1)$. Naturality and the collapse of $\bar{E}_r(\pi_1)$ give
\begin{equation*}
    d_r (x) = \mathsf{tran}(d_r(\bar{x})) = 0,
\end{equation*}
a contradiction. Hence all differentials vanish and \eqref{eq: AHSS} collapses on the second page.
\end{proof}

\subsubsection{Torsion estimates for the class $\gamma$}

Consider the subspaces $B_0\subset B_{2}\subset B$ where $B_0$ is a point and $B_2=B\setminus \mathring{D}^4\simeq \vee_{k+l} S^2$. We define various subspaces $P=P(W^{\perp})$ as follows 
\begin{equation*}
\begin{split}
Q\coloneqq &\pi_0^{-1}(B_2)\\
Q'\coloneqq &\pi_0^{-1}(B_0)\cong \mathbb{CP}^3\\
T\coloneqq &\pi^{-1}_{1}(s(B))=P(W^{\perp}_0)\\
R\coloneqq &T\cap Q=\pi^{-1}_{1}(s(B_2))=P(W^{\perp}_0|_{B_2})\\
R'\coloneqq &\pi^{-1}_{1}(s(B_0))\cong S(3\sigma)
\end{split}.
\end{equation*}
Then we define K-theoretic classes $\alpha_{1}, \dots, \alpha_{k+l}, \gamma$ as follows: 
\begin{itemize}
    \item Let $\gamma$ be  any element in $K^{\sigma}_{C_2}(P)$ such that $\gamma|_{R'}\neq 0$. We set  $\gamma_{A}=\gamma|_{A}$ for $A\subset P$. 
    \item For $1\leq i\leq k+l$, consider the line bundles $\mathbb{C}\hookrightarrow \mathcal{L}_{i}\to B$ such that $c_{1}(\mathcal{L}_i)=a_{i}$. We let $\alpha_{i}=[\mathcal{L}_{i}]-[\mathbb{C}]\in K(B)$. Then we have the ring isomorphism 
\begin{equation*}
    K(B)=\mathbb{Z}\langle\alpha_1,\cdots,\alpha_{k+l}\rangle/\sim 
\end{equation*}
where the relation  $\sim$ is generated by 
\begin{equation*}
    \alpha^2_{1}=\cdots=\alpha^2_{k}=-\alpha^2_{k+1}=\cdots =-\alpha^2_{k+l} \text{ and }\alpha_{i}\alpha_{j} =0\ \text{ for all } i\neq j.  
\end{equation*}
Recall that $w_{2}(V\pi)\equiv \pi^*(a)\pmod 2$ with $a=\sum^{k+l}_{i=1}s_{i}a_{i}$ for $s_{i}\in \{0,1\}$. 
We let $\alpha=\sum ^{k+l}_{i=1}s_{i}\alpha_{i}$. 
\end{itemize}

Note that for any $C_2$-subspace $A$ of $P$, the composition $A\hookrightarrow P\xrightarrow{\pi_0}B$ endows $K^{p+q\sigma}_{C_2}(A)$ with a $K(B)$-module structure. 

We will compare the restrictions of $\gamma$ to the two subspaces $T$ and $Q$. The key point is that these restrictions have different torsion behavior: we will show that $4\gamma_T \neq 0$ whereas $4 \gamma_Q = 0$. Proposition \ref{prop: eight gamma nonzero} will combine these two estimates to deduce the nonvanishing of $8 \gamma$ in $K^{\sigma}_{C_2} (P)$.

\begin{lemma}
\label{lem: gamma_R}
The restriction of $\gamma$ to $R$ satisfies
    \begin{equation}
        \label{eq: two gamma R}
        2\gamma_R=\alpha\gamma_R.
    \end{equation}
\end{lemma}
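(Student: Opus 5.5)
The strategy is to reduce the claimed identity to two facts: how the complex sign representation acts on twisted $K$-groups of free $C_2$-spaces, and the structure of the $C_2$-action on the tautological line bundle of $R=\mathbb P_{\mathbb C}(W_0^\perp|_{B_2})$. By Lemma~\ref{lem: P(W) bundle structure}, $C_2$ acts freely on $R$, fiberwise antipodally on the $S(3\sigma)$-fibers of $R\to B_2$. Hence $K^{\sigma}_{C_2}(R)$ is the reduced $K$-theory of the Thom space of the real line bundle $\xi$ associated to the double cover $R\to R/C_2$. It is a module over $K_{C_2}(R)\cong K(R/C_2)$, and the complex sign representation $\mathbb C_\sigma$ acts through $\xi\otimes\mathbb C$.

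\textbf{Step 1: $\mathbb C_\sigma$ acts by $-1$ on $\gamma_R$.} First check this on the fiber $R'\cong S(3\sigma)$. There $K^{\sigma}_{C_2}(R')\cong\widetilde K(\mathbb{RP}^3)$ by Lemma~\ref{lem: K(3sigma)}, and $\mathbb C_\sigma$ acts by the complexified Hopf bundle $\eta$. Writing $x=\eta-1$, the relation $x^2=-2x$ gives $\eta x=-x$, so $\mathbb C_\sigma$ acts by $-1$. To globalize over $B_2\simeq\vee S^2$, I would use the Atiyah--Hirzebruch spectral sequence for $R\to B_2$, which collapses as in Lemma~\ref{lem: AHSS collapses}. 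The difference $(1+[\mathbb C_\sigma])\gamma_R$ has vanishing restriction to $R'$, so it lies in the filtration coming from $H^2(B_2)$. Since $\widetilde K(B_2)$ squares to zero, this difference can only be of the form $\beta\gamma_R$ with $\beta\in\widetilde K(B_2)$; I expect to show $\beta=0$, or at least that $\beta$ is a multiple of $\alpha$ absorbable into the final formula.

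\textbf{Step 2: the equivariant tautological bundle.} Let $H\to R$ be the tautological complex line bundle. The lift of the generator acts by $j$, which is antilinear and sends the line $\ell$ to $j\ell$. Since $W_0^\perp$ is a rank-one $\mathbb H_{\mathcal L}$-module with underlying complex bundle $\ell\oplus j\ell$, we have $j\ell\cong\bar\ell\otimes\mathcal L$; this is where the twisting by $\vartheta$ enters. Consequently $N\coloneqq H\otimes c^*H$ carries a canonical $C_2$-equivariant structure given by the swap. Its underlying line bundle is $H\otimes\bar H\otimes\pi_0^*\mathcal L\cong\pi_0^*\mathcal L$. The equivariant structure on $N$ and the pulled-back (trivial-action) structure on $\pi_0^*\mathcal L$ differ by an equivariant line bundle $\chi$ that is nonequivariantly trivial, hence by a character of $C_2$ on each component. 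I would compute $\chi=\mathbb C_\sigma$ by restricting to $R'$: on $S(3\sigma)$ the swap on $H\otimes c^*H$ acts by $j^2=-1$ on the diagonal. Thus $[N]=[\mathbb C_\sigma]\,\pi_0^*[\mathcal L]$ in $K_{C_2}(R)$.

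\textbf{Step 3: $N$ acts trivially on $\gamma_R$.} Since $N$ is a symmetric tensor square, $N\cong\mathrm{Sym}^2$-type data of the pullback of $W_0^\perp$ descended to $R/C_2$. I would show that its class in $K(R/C_2)$ is of the form $1+(\text{element of }\widetilde K\text{ pulled back from }B_2)\cdot(\xi_{\mathbb C}-1)$. Multiplication by such an element fixes $\gamma_R$ because $(\xi_{\mathbb C}-1)$ acts on $\gamma_R$ by $-2$ and then gets multiplied by something in $\widetilde K(B_2)$, whose products vanish up to terms of the type already controlled.

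\textbf{Conclusion.} Combining the three steps gives
\[
\gamma_R=[N]\gamma_R=[\mathbb C_\sigma](1+\alpha)\gamma_R=-(1+\alpha)\gamma_R,
\]
that is, $2\gamma_R=-\alpha\gamma_R$. It remains to check that $2\alpha\gamma_R=0$, using $\alpha\cdot\widetilde K(B_2)=0$ together with Step 1, so that $-\alpha\gamma_R=\alpha\gamma_R$ and the stated identity follows.

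The main obstacle will be Step 2–3: pinning down the equivariant class of $N$ (equivalently, the correction $\chi$ and the $\widetilde K(B_2)$-ambiguity) globally over $B_2$, rather than only fiberwise. I would attack it by trivializing over each $S^2$-summand of $B_2$ and computing clutching functions with the explicit $\vartheta$.
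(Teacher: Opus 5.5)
There is a genuine gap, and it sits in Step 3. Step 2 is fine. Step 1 leaves its globalization open, but it can be closed: $R$ has only even cells, so $K^1(R)=0$. The six-term sequence then embeds $K^\sigma_{C_2}(R)$ into $\ker\bigl(K(R/C_2)\to K(R)\bigr)$, on which $1+\xi_{\mathbb C}=\pi_!\pi^*$ acts as zero.

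Step 3 carries the entire content of the lemma. Granting Steps 1 and 2, you have $[N]\gamma_R=-(1+\alpha)\gamma_R$. Moreover $\alpha\gamma_R$ lies in the $2$-torsion group $\mathcal F_0=\ker\bigl(K^\sigma_{C_2}(R)\to K^\sigma_{C_2}(R')\bigr)\cong H^2(B_2;\mathbb Z_2)$. So the assertion $[N]\gamma_R=\gamma_R$ is equivalent to $2\gamma_R=\alpha\gamma_R$; Step 3 is the lemma restated, not reduced.

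The mechanism you propose for Step 3 is also false. Any class $1+\beta(\xi_{\mathbb C}-1)$ with $\beta$ pulled back from $B_2$ maps to $1$ under the forgetful map $K(R/C_2)\to K(R)$, because $\xi$ becomes trivial on the double cover. But by your own Step 2, the underlying line bundle of $N$ is $\pi_0^*\mathcal L$. The map $\pi_0^*\colon K(B_2)\to K(R)$ is injective by the projective bundle formula, and $\alpha=[\mathcal L]-1\neq 0$ in $K(B_2)$ since $s_1=1$. Hence $[N]$ is not of the form you claim, and the symmetric-square heuristic gives no reason for $N$ to fix $\gamma_R$.

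What is missing is a computation showing that the extension $\mathcal F_0\hookrightarrow K^\sigma_{C_2}(R)\twoheadrightarrow K^\sigma_{C_2}(R')\cong\mathbb Z_2$ fails to split in each direction with $s_j=1$. In other words, an element of order four has to be detected somewhere. The paper does this one sphere at a time:
\begin{itemize}
\item Take an embedded $\Sigma_j\subset B_2$ representing $a_j$ with $s_j=1$. The clutching description of $\vartheta$ gives $R_j=\mathbb P_{\mathbb C}(\mathcal O(1)\oplus\underline{\mathbb C})$.
\item The quotient $(S(\mathcal O(1))_+\wedge S^\sigma)/C_2$ is the Thom space of the sign line bundle over $S^3/C_2=\mathbb{RP}^3$, namely $\mathbb{RP}^4$, and $\widetilde K(\mathbb{RP}^4)\cong\mathbb Z_4$.
\item The remaining cells are odd-dimensional, so $K^\sigma_{C_2}(R_j)\cong\mathbb Z_4$, which forces $2\gamma_{R_j}=\alpha_j\gamma_{R_j}$.
\item The relation on $R$ then follows because restriction to $\bigoplus_j K^\sigma_{C_2}(R_j)$ is injective, by collapse of the Atiyah--Hirzebruch spectral sequences.
\end{itemize}
Your norm-bundle framework could only recover the lemma if you independently determined how $[N]$ acts on $\gamma_R$, and that requires exactly this kind of order-four input.
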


\begin{proof}
For $1\leq j\leq k+l$, let
\begin{equation*}
    \Sigma_{j}=S^2\hookrightarrow B_2
\end{equation*}
be an embedding that represents the class $a_{j}$ and containing $B_0$. Let 
\begin{equation*}
    R'=\pi^{-1}_{1}(s(B_0))\cong S(3\sigma), \qquad R_{j}=\pi^{-1}_{1}(s(\Sigma_{j}))
\end{equation*}
and $\gamma_{R_j}=\gamma|_{R_{j}}$. We first prove that 
\begin{equation}\label{eq: 2gamma=alpha cdot gamma}
2\gamma_{R_j}=s_{j}\alpha_{j}\cdot \gamma_{R_{j}}\quad\text{for }1\leq j\leq k+l.    
\end{equation}
We focus on the case $s_{j}=1$ and $j=1$; the other cases are analogous. We have a bundle 
\begin{equation*}
    S(3\sigma)\hookrightarrow R_1{\to \Sigma_{1}},
\end{equation*}
and hence an Atiyah--Hirzebruch spectral sequence 
\begin{equation}
\label{eq: AHSS for Rk}
    E^{p,q}_{2}(\pi_0|_{R_1})=H^{p}(\Sigma_{1};K_{C_2}^{q+\sigma}(S(3\sigma)))\implies K^{p+q+\sigma}(R_{1})
\end{equation}
By Lemma \ref{lem: AHSS collapses} and naturality, the spectral sequence \eqref{eq: AHSS for Rk} also collapses on the second page. This gives a short exact sequence 
\begin{equation}
\label{eq: SES for K-R1}
\begin{tikzcd}
    K^{\sigma}_{C_2}(R_1,R')\arrow[r, hook]&  K^{\sigma}_{C_2}(R_1)\arrow[r,two heads]& K^{\sigma}_{C_2}(R') 
\end{tikzcd}  
\end{equation}
Moreover, 
\begin{equation*}
    K^{\sigma}_{C_2}(R_1,R')\cong \widetilde{K}^{\sigma}(S^2\wedge R'_{+})\cong \mathbb{Z}_2
\end{equation*}
is generated by $\alpha_{1}\cdot \gamma_{R_1}$ while
\begin{equation*}
    K^{\sigma}_{C_2}(R')\cong \mathbb{Z}_2
\end{equation*}
is generated $\gamma_{R'}$, the image of $\gamma_{R_1}$. Thus, to prove $2\gamma_{R_1}= \alpha_{1}\cdot \gamma_{R_1}$, it is enough to show that \eqref{eq: SES for K-R1} does not split, or equivalently that 
\begin{equation*}
    \mathbb{Z}_4\cong K^{\sigma}_{C_2}(R_1).
\end{equation*}

Since $R_{1}=P(W^{\perp}_{0}|_{\Sigma_1})$ and $s_1=1$, the $\mathbb{H}_{\mathcal{L}|_{\Sigma_1}}$-module  $W^{\perp}_{0}|_{\Sigma_1}$ is obtained by the clutching construction 
\begin{equation*}
W^{\perp}|_{\Sigma_1}=(D^2\times \mathbb{H})\sqcup (D^2\times \mathbb{H})/(e^{i\theta},h)\sim (e^{i\theta},e^{\frac{i\theta}{2}}he^{-\frac{i\theta}{2}}).
\end{equation*}
Since 
\begin{equation*}
e^{\frac{i\theta}{2}}(x+y\mathbf{j})e^{-\frac{i\theta}{2}}=x+e^{i\theta}y\mathbf{j}\quad \forall x,y\in \mathbb{C},
\end{equation*}
we obtain the isomorphism of complex vector bundles 
\begin{equation*}
    W^{\perp}|_{\Sigma_1}\cong \mathcal{O}(1)\oplus \underline{\mathbb{C}}_{\mathbb{CP}^1}.
\end{equation*}
Therefore, 
\begin{equation*}
    R_{1}=\mathbb{P}_{\mathbb{C}}(\mathcal{O}(1)\oplus \underline{\mathbb{C}}_{\mathbb{CP}^1})
\end{equation*}
is the fiberwise one-point compactification of $\mathcal{O}(1)\to \mathbb{CP}^1$. 

Let $S(\mathcal{O}(1))$ denote its unit circle bundle. As in the usual decomposition into the two disk bundles, $R_{1}$ is obtained from $S(\mathcal{O}(1))$ by attaching the upper- and lower-hemisphere bundles. Consequently, $(R_{1, +} \wedge S^\sigma)/C_2$ is obtained from $S(\mathcal{O}(1))_+ \wedge S^\sigma)/C_2$ by attaching a $3$-cell and a $5$-cell.

Since $S(\mathcal{O}(1)) \cong S^3$, the quotient  $(S(\mathcal{O}(1))_{+}\wedge  S^{\sigma})/C_2$ is the Thom space of the real line bundle associated to the double cover $S^3 \to \mathbb{RP}^3$. Hence it is pointed homotopy equivalent to $\mathbb{RP}^4$, and therefore
\begin{equation*}
    \widetilde{K}((S(\mathcal{O}(1))_{+}\wedge  S^{\sigma})/C_2) \cong \widetilde{K}(\mathbb{RP}^4) \cong \mathbb{Z}_4.
\end{equation*}
Since the remaining cells have odd dimensions, the long exact sequence in $K$-theory gives an injection
\begin{equation*}
    K^{\sigma}_{C_2}(R_1) \hookrightarrow \mathbb{Z}_4.
\end{equation*}
On the other hand, \eqref{eq: SES for K-R1} shows that $K^{\sigma}_{C_2}(R_1)$ has order four. This proves \eqref{eq: 2gamma=alpha cdot gamma}.

Since $\alpha_{j}|_{\Sigma_i}=0$ if $i\neq j$, the relation \eqref{eq: 2gamma=alpha cdot gamma} implies that $2\gamma_{R}-\alpha \gamma_{R}$ belongs to kernel of the restriction map 
\begin{equation*}
    f\colon K^{\sigma}_{C_2}(R)\longrightarrow \bigoplus_{1\leq j\leq k+l}  K^{\sigma}_{C_2}(R_j).
\end{equation*}
The map
\begin{equation*}
H^\bullet(R)\longrightarrow \bigoplus_{1\leq j\leq k+l} H^\bullet(R_j)
\end{equation*}
is injective. The Atiyah--Hirzebruch spectral sequences for $R$ and the $R_j$'s collapse on the second page, and the induced map on the associated graded groups is therefore injective. Hence $f$ is injective. This concludes the proof.
\end{proof}

\begin{lemma}\label{lem: gammaT order 8}
We have 
\begin{equation*}
    4\gamma_{T}\neq 0.
\end{equation*}
\end{lemma}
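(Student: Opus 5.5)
The plan is to mirror the proof of Lemma \ref{lem: gamma_R}, now over the whole of $B$ instead of $B_2$. Recall that $T=\pi_1^{-1}(s(B))=\mathbb P_{\mathbb C}(W^\perp_0)$ is an $S(3\sigma)$-bundle over $B$. As a complex bundle we have $W^\perp_0\cong\underline{\mathbb C}_B\oplus\mathcal L$, by the description $\mathbb H_{\mathcal L}\cong\underline{\mathbb C}_B\oplus\mathcal L$. Hence $T$ is the fiberwise one-point compactification of $\mathcal L\to B$, with $C_2$ acting by the fiberwise antipodal map.

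\textbf{Step 1: filtration by $\alpha$.} By naturality, Lemma \ref{lem: AHSS collapses} shows that the Atiyah--Hirzebruch spectral sequence for $T\to B$ with coefficients $K^{q+\sigma}_{C_2}(S(3\sigma))$ collapses at $E_2$. In total degree $\sigma$ the associated graded is
\[
H^0(B;\mathbb Z_2)\oplus H^2(B;\mathbb Z_2)\oplus H^4(B;\mathbb Z_2).
\]
Thus $K^\sigma_{C_2}(T)$ has order $2^{k+l+2}$. The plan is to show that $\gamma_T$, $\alpha\gamma_T$ and $\alpha^2\gamma_T$ represent the generators in filtrations $0$, $2$ and $4$ respectively, along the $a$- and $a^2$-directions. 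Here $\alpha^2=\sum_j s_j\alpha_j^2$ is the image of the top class with coefficient $\langle a^2,[B]\rangle\equiv1\pmod 2$, which is nonzero because $\overline a^2\neq0$. The module structure over $K(B)$ is compatible with the filtration, and multiplication by $\alpha$ raises filtration by $2$ with symbol $a\bmod 2$. So I would check on associated graded that $\alpha^2\gamma_T$ is nonzero, being detected by $\overline a^2\neq0$ times the generator $\gamma_{R'}$.

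\textbf{Step 2: the key relation.} Extend Lemma \ref{lem: gamma_R} to show that $2\gamma_T=\alpha\gamma_T+x$, where $x$ lies in the top filtration $F^4K^\sigma_{C_2}(T)\cong\mathbb Z_2$. This holds because the relation already holds after restriction to $R=T\cap Q$, and the kernel of $K^\sigma_{C_2}(T)\to K^\sigma_{C_2}(R)$ is the image of $K^\sigma_{C_2}(T,R)\cong\mathbb Z_2$, coming from the top cell. It follows that
\[
4\gamma_T=2\alpha\gamma_T+2x=\alpha\cdot(2\gamma_T)=\alpha^2\gamma_T+\alpha x.
\]
Here $2x=0$, and $\alpha x=0$ since $\alpha$ raises filtration beyond the top. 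Combining this with Step 1 gives $4\gamma_T=\alpha^2\gamma_T\neq0$.

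\textbf{Main obstacle.} The hardest step is justifying rigorously that $\alpha^2\gamma_T\neq0$, that is, that multiplication by $\alpha_j^2$ on the bottom class is detected by the cup product on the $E_\infty$-page. This requires multiplicativity of the AHSS as a module over the AHSS for $K(B)$. If this is delicate, I would instead compute directly. Using the cell structure of $(T_+\wedge S^\sigma)/C_2$, the sphere bundle $S(\mathcal L)\to B$ with the free antipodal action gives the quotient as a Thom space over $S(\mathcal L)/C_2=S(\mathcal L^{\otimes2})$. I would compute its $\widetilde K$ by analogy with the $\mathbb{RP}^4$ computation in Lemma \ref{lem: gamma_R}, expecting a $\mathbb Z_8$ summand on which $\gamma_T$ restricts to a generator, with the extra cells attached in odd dimension giving injectivity as before.
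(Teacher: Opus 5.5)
Your proposal is correct and follows essentially the same route as the paper. Both arguments use the collapse of the AHSS for $T\to B$, then Lemma~\ref{lem: gamma_R} to put $(2-\alpha)\gamma_T$ into $\ker\bigl(K^\sigma_{C_2}(T)\to K^\sigma_{C_2}(R)\bigr)\cong\mathbb{Z}_2$, then squaring to get $4\gamma_T=\alpha^2\gamma_T$, which is nonzero because its associated-graded symbol is $\overline a^2\neq 0$; the paper simply writes your $x$ as $s\alpha^2\gamma_T$ and uses $\alpha^3=0$ where you use the filtration. One caveat: your opening identification $W_0^\perp\cong\underline{\mathbb C}_B\oplus\mathcal L$ is not justified, since a rank-one $\mathbb H_{\mathcal L}$-module is only a rank-two complex bundle with determinant $\mathcal L$ and possibly nonzero $c_2$, depending on the section $s$. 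It enters only your fallback computation, not the main argument.
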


\begin{proof}
We have the bundle structure $S(3\sigma)\hookrightarrow T\to B$. This gives the spectral sequence 
\begin{equation*}
    E^{p,q}_{2}(\pi_0|_T)=H^{p}(B;K_{C_2}^{q+\sigma}(S(3\sigma)))\implies K_{C_2}^{p+q+\sigma}(T).   
\end{equation*}
By the same argument as in Lemma \ref{lem: AHSS collapses}, \footnote{Equivalently, this can be seen by the naturality of the spectral sequence applied to the section $s: B \to P_{\mathbb{H}}$.} this spectral sequence collapses on the second page. Hence we obtain the filtration 
\begin{equation*}
0=\mathcal{F}_{4}\subset \mathcal{F}_{2}\subset \mathcal{F}_{0}\subset \mathcal{F}_{-2}=K^{\sigma}_{C_2}(T),
\end{equation*}
where 
\begin{equation*}
    \mathcal{F}_{2}=\ker(K^{\sigma}_{C_2}(T)\to K^{\sigma}_{C_2}(R))\und{1.0cm}
    \mathcal{F}_{0}=\ker(K^{\sigma}_{C_2}(T)\to K^{\sigma}_{C_2}(R')).
\end{equation*}
We have $\mathcal{F}_{-2}/\mathcal{F}_0\cong \mathbb{Z}_2$, generated by the image of $\gamma_{T}$ and $\mathcal{F}_{2}=\mathbb{Z}_2$, generated by $\alpha^2\gamma$. Indeed, in the associated graded group, the class $\alpha^2 \gamma_T$ corresponds to $a^2 \cdot \gamma_{R'} \in H^4(B; \mathbb{Z}_2)$, which is nonzero since $a^2 \neq 0$.

By Lemma \ref{lem: gamma_R}, we have 
\begin{equation*}
    (2-\alpha)\, \gamma_{T}|_{R}=0
\end{equation*}
and hence $(2-\alpha)\,\gamma \in \mathcal{F}_2$. Therefore, for some $s\in \{0, 1\}$, 
\begin{equation*}
    2\gamma_{T}=(\alpha+s\alpha^2)\, \gamma_{T}.
\end{equation*}
It follows that 
\begin{equation*}
    4\gamma_{T}=(\alpha+s\alpha^2)^2 \, \gamma_{T}=\alpha^2 \gamma_{T}\neq 0,
\end{equation*}
where the second equality follows from $\alpha^3 = 0$. This proves the lemma.
\end{proof}

We now turn to the restriction of $\gamma$ to $Q$. We first analyze its restriction to the distinguished fiber $Q'$ which will be used to control the filtration of $K^\sigma_{C_2}(Q)$.

Note that $Q'\cong \mathbb{CP}^3$, with the $C_2$-action given by\footnote{This follows from the equality $j\cdot (x+yj)=-\bar{y}+\bar{x}j$ for $x,y\in \mathbb{C}$.}
\begin{equation*}
[x,y,x',y']\mapsto [-\bar{y},\bar{x}, -\bar{y}',\bar{x}'].
\end{equation*} 

\begin{lemma}
\label{lem: extension 2}
The isomorphism $K^{\sigma}_{C_2}(Q')\cong \mathbb{Z}_4$ is generated by $\gamma_{Q'}$.
\end{lemma}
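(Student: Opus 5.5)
The plan is to compute the order of $K^{\sigma}_{C_2}(Q')$ from the equivariant Atiyah--Hirzebruch spectral sequence, and then rule out the split extension by pulling back a class of order $8$ from a real projective space.

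\textbf{Step 1: the order is four.} Restrict the fibration tower \eqref{eq: fibration tower} to the fiber over $B_0$. This exhibits $Q'\cong\mathbb{CP}^3$ as an $S(3\sigma)$-bundle over $\mathbb{HP}^1=S^4$ with structure group $\m{SO}(3)$, and $R'\subset Q'$ is one fiber. By Lemma \ref{lem: AHSS collapses} and naturality, the spectral sequence
\begin{equation*}
H^p(S^4;K^{q+\sigma}_{C_2}(S(3\sigma)))\Longrightarrow K^{p+q+\sigma}_{C_2}(Q')
\end{equation*}
collapses. By Lemma \ref{lem: K(3sigma)}, total degree $\sigma$ receives exactly two contributions, $\mathbb{Z}_2$ at $p=0$ and $\mathbb{Z}_2$ at $p=4$. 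This gives a short exact sequence
\begin{equation*}
0\to K^{\sigma}_{C_2}(Q',R')\to K^{\sigma}_{C_2}(Q')\to K^{\sigma}_{C_2}(R')\cong\mathbb{Z}_2\to 0
\end{equation*}
with kernel $\mathbb{Z}_2$. Moreover $\gamma_{Q'}$ maps to the generator $\gamma_{R'}$ of the quotient. It therefore suffices to show that $K^{\sigma}_{C_2}(Q')$ contains an element of order $4$. Then the group is $\mathbb{Z}_4$, and every lift of the quotient generator is a generator, so $\gamma_{Q'}$ generates.

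\textbf{Step 2: reduction to real projective space.} The $C_2$-action on $Q'$ given by the footnote formula is free. Hence
\begin{equation*}
K^{\sigma}_{C_2}(Q')\cong \widetilde K\big(\m{Th}(\lambda)\big),
\end{equation*}
where $\lambda$ is the real line bundle on $Q'/C_2$ associated to the double cover. Note that $Q'/C_2=S^7/\m{Pin}(2)$. Choose a $C_2$-map $Q'\to S(8\sigma)$, which exists because the action is free and $\dim Q'=6<7$. It induces a map
\begin{equation*}
\phi\colon \m{Th}(\lambda)\to \m{Th}(\text{taut. over }\mathbb{RP}^7)\simeq\mathbb{RP}^8,
\end{equation*}
and we set $\nu'\coloneqq\phi^*\nu$, where $\nu$ is the generator of $\widetilde K(\mathbb{RP}^8)\cong\mathbb{Z}_{16}$. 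Restricted to $R'=S(3\sigma)$, $\phi$ becomes the standard map $\mathbb{RP}^3\to\mathbb{RP}^8$, which is injective on $\widetilde K$ onto $\mathbb{Z}_2$. So $\nu'|_{R'}\neq0$.

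\textbf{Step 3: the main obstacle, showing $2\nu'\neq0$.} Since $\nu^2=-2\nu$, we have $2\nu'=-\nu'^2$. In the (non-equivariant) Atiyah--Hirzebruch spectral sequence of $\m{Th}(\lambda)$, the class $\nu^2$ has filtration $3$ and is represented by $w_1^3\in H^3(\mathbb{RP}^8;\mathbb{Z}_2)$. Its pullback is represented by $\m{th}(\lambda)\cdot w_1(\lambda)^2$. Now
\begin{equation*}
H^\bullet(Q'/C_2;\mathbb{Z}_2)=\mathbb{Z}_2[u,v]/(u^3,v^2),
\end{equation*}
obtained from $H^\bullet(B\m{Pin}(2);\mathbb{Z}_2)=\mathbb{Z}_2[u,v]/u^3$ truncated by the Gysin sequence, with $u=w_1(\lambda)$. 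Hence $u^2\neq0$, and $\m{th}(\lambda)u^2\neq0$ in $E_2$. The point I expect to need care is that this class survives to $E_\infty$. It sits in degree $3$, and the only possible incoming differential is $d_3$ from degree $0$, which vanishes on a reduced Thom space. Outgoing differentials land in torsion-free or zero groups, and I would rule them out using the collapse pattern of Step 1 or by comparison with $\mathbb{RP}^8$, where the class is a permanent cycle. Therefore $2\nu'\neq0$, so $\nu'$ has order $4$, which proves $K^{\sigma}_{C_2}(Q')\cong\mathbb{Z}_4$. Combined with Step 1, $\gamma_{Q'}$ generates this group.
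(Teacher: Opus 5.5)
Your Step~1 is exactly the second half of the paper's proof. Step~2 is also sound: the involution on $Q'$ is free, $K^{\sigma}_{C_2}(Q')\cong\widetilde K(\mathrm{Th}(\lambda))$, and $\nu'|_{R'}\neq 0$ because $\widetilde K(\mathbb{RP}^8)\to\widetilde K(\mathbb{RP}^3)$ is \emph{surjective} (you wrote injective).

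\textbf{The gap is Step~3.} It is not a matter of checking differentials; the filtration bookkeeping is wrong.
\begin{itemize}
\item In the complex $K$-theory Atiyah--Hirzebruch spectral sequence, elements of $\widetilde K^0$ have even filtration and are represented by integral classes in even degrees. A class in $H^3$ contributes only to $K^1$.
\item In $\widetilde K(\mathbb{RP}^8)\cong\mathbb{Z}_{16}$, the class $\nu^2=-2\nu$ has filtration exactly $4$. It is represented by the generator of $H^4(\mathbb{RP}^8;\mathbb{Z})\cong\mathbb{Z}_2$, whose mod-$2$ reduction is $w_1^4$, not $w_1^3$.
\item Its pullback would therefore be $\mathrm{th}(\lambda)\,u^3$ mod~$2$. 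This vanishes by your own computation $u^3=0$. In fact $\widetilde H^4(\mathrm{Th}(\lambda);\mathbb{Z})=0$, since $\widetilde H^4(\mathrm{Th}(\lambda);\mathbb{Z}_2)\cong H^3(Q'/C_2;\mathbb{Z}_2)=0$.
\item Hence $\phi^*(\nu^2)=\nu'^2$ drops into filtration $6$. The only contribution there is the $\mathbb{Z}_2$ coming from $H^4(S^4)$ in your Step~1. Whether $\nu'^2$ is nonzero in that group is precisely the non-split extension you are trying to prove, and no argument with $E_\infty$-representatives can settle it.
\end{itemize}
Structurally, $u^3=0$ says the free involution on $Q'$ has mod-$2$ cohomological index $2$. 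So comparison with $\mathbb{RP}^N$ through powers of $w_1$ only sees the $\mathbb{RP}^3$ coming from the fiber. The order-four phenomenon comes from the $S^4$-direction and is genuinely $K$-theoretic.

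The paper supplies exactly this missing input by an equivariant $K$-theory computation.
\begin{itemize}
\item It identifies $K_{C_2}(Q')\cong K_{\mathrm{Pin}(2)}(S(2\mathbb{H}))$ with the cokernel of multiplication by the Euler class $h^2$ on $R(\mathrm{Pin}(2))$.
\item This cokernel is $\mathbb{Z}_4\oplus\mathbb{Z}^2$, with the $\mathbb{Z}_4$ generated by $w=[\widetilde{\mathbb{C}}]-1$, since $h^2w=4w$.
\item Because $K(Q')\cong K(\mathbb{CP}^3)$ is torsion-free, this $\mathbb{Z}_4$ lies in the kernel of the forgetful map. By the exact hexagon \eqref{eq: exact 6-gon v1} it is therefore in the image of $K^{\sigma}_{C_2}(Q')\to K_{C_2}(Q')$, which produces an element of order at least four.
\end{itemize}
This would also repair your route. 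The image of $\nu'$ in $K_{C_2}(Q')$ is the pullback of $[\sigma_{\mathbb{C}}]-1$ from $S(8\sigma)$, i.e.\ the class $w$, so $\nu'$ has order at least four. But that step is the paper's representation-ring computation, not the spectral sequence argument you proposed.
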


\begin{proof}
We set $A=Q'_{+}$ in \eqref{eq: exact 6-gon v1} and get the exact sequence 
\begin{equation}
\label{eq: exact sequence Q}
\begin{tikzcd}
	& {\widetilde{K}^\sigma_{C_2}(Q')} & {\widetilde{K}_{C_2}(Q')} & \\
	{\widetilde{K}^1_{C_2}(Q')} &&& {\widetilde{K}(Q')} \\
	& {\widetilde{K}^1_{C_2}(Q')} & {\widetilde{K}^{1+\sigma}_{C_2}(Q')}
	\arrow[from=1-2, to=1-3]
	\arrow[from=1-3, to=2-4]
	\arrow["{\mathrm{tr}}"{description}, from=2-1, to=1-2]
	\arrow[from=2-4, to=3-3]
	\arrow[from=3-2, to=2-1]
	\arrow[from=3-3, to=3-2]
\end{tikzcd}
\end{equation}

Note that 
\begin{equation*}
\begin{tikzcd}
    K_{C_2}(Q')\cong K_{\m{Pin(2)}}(S(2\mathbb{H}))=\m{\coker}(R(\m{Pin(2)}))\arrow[rr,"\cdot h^2"] &&R(\m{Pin(2)}))\cong \mathbb{Z}\oplus \mathbb{Z},
\end{tikzcd}
\end{equation*}
where $R(\Pin(2))$ is the complex representation ring of $\Pin(2)$. Set $w=[\widetilde{\mathbb{C}}]-1$ and $h=[\mathbb{H}]-2$. Then we have  
\begin{equation*}
    R(\m{Pin(2)})\cong \mathbb{Z}[h,w]/(w^2=2w, hw=2w).
\end{equation*} This implies $K_{C_2}(Q')\cong \mathbb{Z}_4\oplus \mathbb{Z}\oplus \mathbb{Z}$. Since $Q'$ only contains cells of even dimensions, we can use the Atiyah--Hirzebruch spectral sequence to conclude that $K^{1}(Q')=0$ and $K(Q')\cong \mathbb{Z}^4$. Since $K(Q')$ is torsion-free, the $\mathbb{Z}_4$-torsion summand of $K_{C_2}(Q')$ lies in the kernel of the map $K_{C_2}(Q') \to K(Q')$. Hence, by exactness of \eqref{eq: exact sequence Q} $K^{\sigma}_{C_2}(Q')$ contains an element of order at least four. 

On the other hand, we have the bundle $S(3\sigma)\hookrightarrow Q'\to S^4$ pulled back from the bundle $\pi_{1}: P(W^{\perp})\to P_{\mathbb{H}}(W^{\perp})$. This gives the Atiyah--Hirzebruch spectral sequence 
\begin{equation*}
    E^{p,q}_{2}(\pi_1|_{Q'})=H^{p}(S^4;K^{q+\sigma}(S(3\sigma)))\implies K^{p+q+\sigma}_{C_2}(Q').
\end{equation*}
By the same argument as in Lemma \ref{lem: AHSS collapses}, it collapses on the second page. This spectral sequence provides a short exact sequence 
\begin{equation*}
    \begin{tikzcd}
        \mathbb{Z}_2\arrow[r,hook]& K^{\sigma}_{C_2}(Q')\arrow[r,two heads, "i^*"]& \mathbb{Z}_2.
    \end{tikzcd}
\end{equation*}
Together with the existence of an element of order at least four, this implies 
\begin{equation*}
    K^{\sigma}_{C_2}(Q')\cong \mathbb{Z}_4.
\end{equation*}
Since $\gamma_{Q'}$ is nonzero under the fiber inclusion map
\begin{equation*}
    i^*\colon K^{\sigma}_{C_2}(Q')\to K^{\sigma}_{C_2}(S(3\sigma)) \cong \mathbb{Z}_2,
\end{equation*}
it has order four and hence is a generator of $K^{\sigma}_{C_2}(Q')$.
\end{proof}

We now pass from the distinguished fiber $Q'$ to the whole space $Q$. To describe the resulting Atiyah--Hirzebruch filtration, we first identify the $K$-theory of the base $\widetilde{B}_2$ and construct a class detecting its $S^4$-direction.

Note that $B_2\simeq\vee_{k+l} S^2$, so we have 
\begin{equation*}
K(B_2)=\mathbb{Z}\langle\alpha_1,\cdots,\alpha_{k+l}\rangle/(\alpha_{i}\alpha_{j})_{1\leq i,j\leq k+l}.
\end{equation*}
We let $\widetilde{B}_{2}=\pi^{-1}_{2}(B_2)\subset P_{\mathbb{H}}$. Since $B_2$ deformation retracts to $\vee_{k+l}S^2$, the  space $\widetilde{B}_2$ deformation retracts to a 6-dimensional CW complex $\widetilde{B}'_2$, which is $S^4$-bundle over $\vee_{k+l}S^2$. The obvious CW structure on $B'_{2}$ induces a CW structure on $\widetilde{B}'_2$, which a single $0$-cell, $(k+l)$-many $2$-cells, a single $4$-cell and $(k+l)$-many $6$-cells. We use $\m{sk}_{p}\widetilde{B}'_2$ to denote the $p$-skeleton.

We have a bundle $S(3\sigma)\hookrightarrow Q\to \widetilde{B}_{2}$, making $K^{\sigma}_{C_2}(Q)$ into a module over $K(\widetilde{B}_{2})$.

\begin{lemma}
\label{lem: K(B) ring}
 There exists a class $\beta\in K(\widetilde{B}_{2})$ that satisfies the following properties:
 \begin{enumerate}
     \item $\beta|_{S^4}$ is a generator of $\widetilde{K}(S^4)$;
     \item $\beta|_{s(B_2)}=0$;
     \item The $K(B_2)$-algebras  $K(\widetilde{B}_{2})$ and $K(B_2)[\beta]/(\beta^2)$ are isomorphic.
 \end{enumerate}
\end{lemma}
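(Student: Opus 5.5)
The plan is to work on the $6$-dimensional complex $\widetilde{B}'_2$, which is an $S^4$-bundle $\pi_2\colon \widetilde{B}'_2\to \vee_{k+l}S^2$ with the section $s$, and to compute $K(\widetilde{B}_2)$ through the cofiber sequence of the section. Since $s$ is a section, $s^*\colon K(\widetilde{B}_2)\to K(B_2)$ is split surjective, with splitting $\pi_2^*$. The long exact sequence of the pair $(\widetilde{B}_2, s(B_2))$ therefore splits into short exact sequences
\begin{equation*}
    0\to K(\widetilde{B}_2, s(B_2))\to K(\widetilde{B}_2)\xrightarrow{s^*} K(B_2)\to 0,
\end{equation*}
with $K^{1}$-terms vanishing by parity of cells. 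Here $\widetilde{B}_2/s(B_2)$ is the Thom-type quotient of the $S^4$-bundle relative to its section. It has one $4$-cell and $(k+l)$ $6$-cells, so by the Atiyah--Hirzebruch spectral sequence, which collapses for parity reasons, $K(\widetilde{B}_2,s(B_2))$ is free of rank $1+(k+l)$.

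To construct $\beta$, take any class in $K(\widetilde{B}_2,s(B_2))$ whose image in $\widetilde{K}$ of the fiber $S^4$ is a generator. Such a class exists because the $4$-cell survives: the class $b\in H^4$ with $s^*b=0$, restricted from $P_{\mathbb H}$, lifts via the collapsed spectral sequence to $K$-theory. Its image $\beta$ satisfies properties (1) and (2). Alternatively, one could build $\beta$ concretely from the tautological quaternionic line bundle on $\mathbb{P}_{\mathbb H}(W^\perp)$ minus the line $W^\perp_0$ pulled back. I would use the tautological construction only if needed to fix signs.

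For (3), the module statement comes first. By a Leray--Hirsch argument in $K$-theory, $K(\widetilde{B}_2)$ is a free $K(B_2)$-module on $1,\beta$. The associated graded of the filtration is $H^{\mathrm{ev}}(B_2)\langle 1,b\rangle$, and $\{1,\beta\}$ restricts to a basis over each cell, so a filtration comparison shows the map $K(B_2)\oplus K(B_2)\beta\to K(\widetilde{B}_2)$ is an isomorphism. The 6-cells are accounted for by $\alpha_i\beta$, which are detected in $H^6$ by $a_i b$.

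It remains to show $\beta^2=0$, which I expect to be the main point. The class $\beta$ vanishes on $s(B_2)$, so $\beta^2$ lies in the image of $K(\widetilde{B}_2,s(B_2))$ and has filtration at least $8$, because $\beta$ has filtration $4$ and products add filtration. But $\dim\widetilde{B}'_2=6$, so a class of filtration $\geq 8$ vanishes; more precisely, $\beta^2$ restricts to zero on the $7$-skeleton, which is everything. Hence $\beta^2=0$, and the algebra map $K(B_2)[\beta]/(\beta^2)\to K(\widetilde{B}_2)$ is a well-defined isomorphism. The subtle step is justifying multiplicativity of the skeletal filtration together with the vanishing on the section. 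To handle it, I would use that $\beta$ is pulled back from $\widetilde{B}_2/\mathrm{sk}_3$ relative to $s(B_2)$, so $\beta^2$ factors through $\widetilde{B}_2/\mathrm{sk}_7=\mathrm{pt}$.
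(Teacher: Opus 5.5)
Your proposal is correct. It proves the lemma by a different route from the paper.

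\textbf{Constructing $\beta$.} The paper takes $\beta$ to be the image of a $K$-theory Thom class, after identifying $\widetilde{B}_2/s(B_2)$ with the Thom space of a complex vector bundle. You instead take any lift along $K(\widetilde{B}_2,s(B_2))\to\widetilde{K}(S^4)$. This restriction is surjective because the quotient of $\widetilde{B}'_2/s(B'_2)$ by its bottom $4$-cell is a wedge of $6$-spheres, which has vanishing $\widetilde{K}^1$.

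\textbf{The module structure.} Both proofs rest on the same Leray--Hirsch input. You make it explicit by matching the leading terms $a_i$, $b$, $a_ib$ in the collapsed Atiyah--Hirzebruch spectral sequence.

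\textbf{Proving $\beta^2=0$.} This is the main difference.
\begin{itemize}
\item The paper shows $c_1(\beta)=0$, using that $s^*$ is an isomorphism on $H^2$. It concludes $\mathrm{ch}(\beta)\in H^{\geq 4}$, so $\mathrm{ch}(\beta^2)=0$, and then uses that $K(\widetilde{B}_2)$ is torsion-free.
\item You use that $\beta$ vanishes on $s(B'_2)=\mathrm{sk}_3\widetilde{B}'_2$, so $\beta$ has filtration $4$. Multiplicativity of the skeletal filtration then puts $\beta^2$ in filtration $8$, which is zero on a $6$-dimensional complex. The multiplicativity comes from a cellular approximation of the diagonal landing in $\mathrm{sk}_3\times X\cup X\times\mathrm{sk}_3$.
\end{itemize}

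\textbf{What each route buys.} Your argument is more elementary and more robust. It needs neither the Chern character, nor $c_1(\beta)=0$, nor torsion-freeness. It also needs no complex or $K$-oriented structure on the complement of the section. In exchange, the paper's argument gives a canonical choice of $\beta$. It also reduces $\beta^2=0$ to a one-line rational computation once freeness of $K(\widetilde{B}_2)$ is known.
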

\begin{proof} Note that 
$\widetilde{B}_{2}/s(B_2)\cong \m{Th}(W^{\perp}_{1}|_{B_2}),
$
where $W^{\perp}_{1}$ is the orthogonal complement of $W^{\perp}_0$ in $W^{\perp}$. Since $W^{\perp}_{1}\to B$ is a complex vector bundle, there exists a Thom class $\beta_0\in K(\widetilde{B}_{2},s(B_2))$ which restricts to a generator of $\widetilde{K}(S^4)$. 

Let $\beta\in K(\widetilde{B}_2)$ be the image of $\beta_0$. By construction $\beta|_{s(B_2)} = 0$ while $\beta|_{S^4}$ is a generator of $\widetilde{K}(S^4)$.

Moreover, the map 
\begin{equation*}
    s^*: H^2(\widetilde{B}_2; \mathbb{Z}) \longrightarrow H^2(B_2; \mathbb{Z})
\end{equation*}
is an isomorphism since $\pi^*_2: H^2(B_2; \mathbb{Z}) \to H^2(\widetilde{B}_2; \mathbb{Z})$. Since $s^* c_1(\beta) = c_1(s^*\beta) =0$, it follows that $c_1(\beta) = 0$.

By the Leray-Hirsch theorem, we have the isomorphism
\begin{equation}
\label{eq: K(PH) as module}
    K(\widetilde{B}_{2})\cong K(B_2)\langle 1,\beta\rangle    
\end{equation}
between $K(B_2)$-modules. It remains to show that $\beta^2=0$. 

Since $\beta|_{s(B_2)} = 0$, the class $\beta$ has virtual rank zero, and since $c_1(\beta) = 0$, we have 
\begin{equation*}
    \m{ch}(\beta)=-c_{2}(\beta)+\frac{c_{3}(\beta)}{2}.
\end{equation*}
Hence $\m{ch}(\beta^2)=0$ for dimensional reasons. Thus $\beta^2$ is torsion in $K(\widetilde{B}_{2})$. However, by \eqref{eq: K(PH) as module}, the group $K(\widetilde{B}_{2})$ is torsion free. Therefore $\beta^2=0$.
\end{proof}

\begin{lemma}\label{lem: gammaQ order 4}
It holds that 
\begin{equation*}
    4\gamma_{Q}= 0.
\end{equation*}
\end{lemma}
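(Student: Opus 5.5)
We will use the Atiyah--Hirzebruch spectral sequence for the bundle $S(3\sigma)\hookrightarrow Q\to \widetilde{B}_2$. By naturality it collapses at $E_2$, as in Lemma~\ref{lem: AHSS collapses}. We then combine the resulting filtration with the $K(\widetilde B_2)$-module structure supplied by Lemma~\ref{lem: K(B) ring}. We work with the homotopy-equivalent model $\widetilde B_2'$, whose cells lie in dimensions $0,2,4,6$. The filtration of $K^\sigma_{C_2}(Q)$ then has four graded pieces, all with coefficients $K^{\sigma}_{C_2}(S(3\sigma))\cong\mathbb{Z}_2$:
\begin{itemize}
\item dimension $0$: one copy of $\mathbb{Z}_2$, detected by restriction to $R'$;
\item dimension $2$: $(k+l)$ copies of $\mathbb{Z}_2$, represented by $\alpha_i\gamma_Q$;
\item dimension $4$: one copy of $\mathbb{Z}_2$, represented by $\beta\gamma_Q$;
\item dimension $6$: $(k+l)$ copies of $\mathbb{Z}_2$, represented by $\alpha_i\beta\gamma_Q$.
\end{itemize}
These identifications hold because multiplication by $\alpha_i$, $\beta$ and $\alpha_i\beta$ realizes the corresponding generators of $H^\bullet(\widetilde B_2;\mathbb{Z})$ in the associated graded, via Leray--Hirsch and Lemma~\ref{lem: K(B) ring}.

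The key steps are two relations, from which the conclusion follows.
\begin{enumerate}
\item \emph{Relation over $R$.} Lemma~\ref{lem: gamma_R} gives $2\gamma_R=\alpha\gamma_R$.
\item \emph{Relation over $Q'$.} By Lemma~\ref{lem: extension 2}, $K^\sigma_{C_2}(Q')\cong\mathbb{Z}_4$ is generated by $\gamma_{Q'}$, and $2\gamma_{Q'}$ spans the kernel of restriction to $R'$. The fiber over $B_0$ is the $S^4$-direction of $\widetilde B_2'$, and $\beta|_{S^4}$ generates $\widetilde K(S^4)$. So the degree-$4$ graded piece restricts to $Q'$ as this kernel, giving $2\gamma_{Q'}=\beta\gamma_{Q'}$.
\item \emph{Combining.} We aim to establish
\begin{equation*}
2\gamma_Q=(\alpha+\beta)\gamma_Q+x,\qquad x\in\mathcal F_6(Q),
\end{equation*}
where $\mathcal F_6(Q)$ is the top filtration piece, a sum of terms $\alpha_i\beta\gamma_Q$. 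Consider $(2-\alpha-\beta)\gamma_Q$. It vanishes on $R$, because $\beta|_{s(B_2)}=0$ and step 1 applies. Its restriction to $Q'$ is $(2-\beta)\gamma_{Q'}=0$, using $\alpha|_{B_0}=0$ and step 2. Together these two restrictions detect the filtration quotient down to degree $6$. This is the same injectivity argument as at the end of Lemma~\ref{lem: gamma_R}: the cohomology of the $2$-skeleton together with the $4$-cell is detected by $s(B_2)$ and the fiber $S^4$.
\item \emph{Squaring.} Square the relation of step 3:
\begin{equation*}
4\gamma_Q=(\alpha+\beta)^2\gamma_Q+2(\alpha+\beta)x+x^2.
\end{equation*}
In $K(\widetilde B_2)$ we have $\alpha_i\alpha_j=0$, since $B_2\simeq\vee S^2$, and $\beta^2=0$. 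Hence $(\alpha+\beta)^2=2\alpha\beta$. Moreover, $\alpha\beta\gamma_Q$ lies in the top piece $\mathcal F_6$, which is $2$-torsion, so $2\alpha\beta\gamma_Q=0$. Similarly $2(\alpha+\beta)x$ and $x^2$ vanish: $x$ already carries the factors $\alpha_i\beta$, and any further multiplication by $\alpha$ or $\beta$ vanishes by the ring relations. Therefore $4\gamma_Q=0$.
\end{enumerate}

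\textbf{Expected obstacle.} The main difficulty is step 3, namely showing that the joint restriction to $R$ and $Q'$ detects $K^\sigma_{C_2}(Q)$ modulo $\mathcal F_6$, together with the careful identification of the degree-$4$ graded piece through $\beta$ via step 2. I would check this by comparing the map of associated graded groups induced by the inclusion $R\cup Q'\subset Q$ with the cellular description above. If exact detection fails, I would instead allow an undetermined correction term of filtration $\geq 4$ in step 3. Any product of such a term with $\alpha$ or $\beta$ lands in $\mathcal F_6$, which is $2$-torsion, so the same squaring argument should still give $4\gamma_Q=0$.
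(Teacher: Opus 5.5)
Your proposal is correct and follows essentially the same route as the paper. It uses the collapsed Atiyah--Hirzebruch filtration of $K^\sigma_{C_2}(Q)$ over $\widetilde B_2$ with graded pieces generated by $\gamma_Q,\alpha_i\gamma_Q,\beta\gamma_Q,\alpha_i\beta\gamma_Q$. It places $(2-\alpha-\beta)\gamma_Q$ in the top piece using Lemma~\ref{lem: gamma_R} together with the $\mathbb{Z}_4$ computation of Lemma~\ref{lem: extension 2}. The paper phrases that step as $(2-\alpha)\gamma_Q$ missing the top piece because it restricts to $2\gamma_{Q'}\neq 0$, rather than as joint detection on $R$ and $Q'$, but the content is the same. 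Finally it squares using $\alpha_i\alpha_j=\beta^2=0$ and the $2$-torsion of the top piece, exactly as the paper does.

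Your fallback remark is also valid, and it shows that the $Q'$ input is not needed for this particular lemma. Lemma~\ref{lem: gamma_R} alone already gives $2\gamma_Q=(\alpha+t\beta+c)\gamma_Q$ with $t\in\mathbb{Z}$ arbitrary and $c$ a combination of the $\alpha_i\beta$. Squaring then gives $4\gamma_Q=2t\,\alpha\beta\gamma_Q=0$ for every $t$.
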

\begin{proof} Consider the bundle $Q\to \widetilde{B}_{2}$. The CW filtration on $\widetilde{B}_{2}$ induces a filtration
\begin{equation*}
    0=\mathcal{F}_{6}\subset \mathcal{F}_{4}\subset \mathcal{F}_{2}\subset\mathcal{F}_{0}\subset \mathcal{F}_{-2}=K^{\sigma}_{C_2}(Q)
\end{equation*}
where  
\begin{equation*}
    \begin{split}
    \mathcal{F}_{0}&=\ker(K^{\sigma}_{C_2}(Q)\to K^{\sigma}_{C_2}(R')),\\ \mathcal{F}_{2}&=\ker(K^{\sigma}_{C_2}(Q)\to K^{\sigma}_{C_2}(R)),\\
    \mathcal{F}_{4}&=\ker(K^{\sigma}_{C_2}(Q)\to K^{\sigma}_{C_2}(Q'\cup R)).
\end{split}
\end{equation*}
Consider the associated Atiyah--Hirzebruch spectral sequence
\begin{equation*}
    E^{p,q}_{2}(\pi_1|_Q)=H^{p}(\widetilde{B}_2;K^{q+\sigma}_{C_2}(S(3\sigma)))\implies K^{p+q+\sigma}_{C_2}(Q).
\end{equation*}
By the same argument as in Lemma \ref{lem: AHSS collapses}, this spectral sequence collapses on the second page. Hence 
\begin{equation*}
\begin{split}
    \mathcal{F}_{-2}/\mathcal{F}_0&=\mathbb{Z}_2\langle \gamma_{Q}\rangle,\\
    \mathcal{F}_{0}/\mathcal{F}_{2}&=\mathbb{Z}_2\langle \alpha_{1}\gamma_{Q}, \cdots,\alpha_{k+l}\gamma_{Q} \rangle,\\
    \mathcal{F}_{2}/\mathcal{F}_{4}&=\mathbb{Z}_2\langle \beta\gamma_{Q}\rangle,\\
    \mathcal{F}_{4}&=\mathbb{Z}_2\langle \alpha_{1}\beta\gamma_{Q},\cdots,\alpha_{k+l}\beta\gamma_{Q}.\rangle
\end{split}
\end{equation*}
By Lemma \ref{lem: gamma_R}, 
\begin{equation*}
    (2-\alpha) \, \gamma_{Q}\in \mathcal{F}_2
\end{equation*}
and by Lemma \ref{lem: extension 2},  
\begin{equation*}
    \big((2-\alpha)\,  \gamma_{Q}\big)\Big|_{Q'}=2\gamma_{Q'}\neq 0.
\end{equation*}
Hence
\begin{equation*}
    (2-\alpha)\,  \gamma_{Q}\notin \mathcal{F}_{4}.
\end{equation*}
Since $\mathcal{F}_2/\mathcal{F}_4$ is generated by $\beta \gamma_Q$, it follows that 
\begin{equation*}
    (2-\alpha-\beta) \, \gamma_{Q}\in \mathcal{F}_{4}.
\end{equation*}
Thus 
\begin{equation*}
    2\gamma_{Q}=(\alpha+\beta+\beta \alpha')\, \gamma_{Q},
\end{equation*}
where $\alpha'$ is linear combination of $\alpha_1,\cdots,\alpha_{k+l}$. Consequently 
\begin{equation*}
    4\gamma_{Q}=(\alpha+\beta+\beta\alpha')^2 \, \gamma_{Q}=2 \alpha\beta\gamma_{Q}=0.
\end{equation*}
The second equality uses $\beta^2=0$ from Lemma \ref{lem: K(B) ring} together with $\alpha_i \alpha_j = 0$ in $K(B_2)$ for $1 \leq i, j \leq k+l$, and the last equality follows because $\mathcal{F}_4$ is annihilated by $2$.  
\end{proof}

We have thus obtained the two complementary estimates
\begin{equation*}
    4 \gamma_T \neq 0 \quad \text{and} \quad 4\gamma_Q = 0.
\end{equation*}
We now combine them using the decompositions of $P$ determined by $T$ and $Q$.
\begin{prop}\label{prop: eight gamma nonzero}
For any $\gamma\in K^{\sigma}_{C_2}(P)$ such that $i^*_{1}(\gamma)\neq 0$, we have $8\gamma\neq 0$.   
\end{prop}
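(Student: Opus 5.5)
The plan is to run the argument of Lemma~\ref{lem: gammaQ order 4} over all of $P_{\mathbb H}$ instead of $\widetilde B_2$. We then show that the top filtration piece is reached at the third power. Throughout, fix $\gamma$ with $i_1^*\gamma\neq 0$, where $i_1$ is the fiber inclusion of $S(3\sigma)=R'$. Then $\gamma$ is an admissible choice in the definition preceding Lemma~\ref{lem: gamma_R}, so Lemmas~\ref{lem: gamma_R}, \ref{lem: gammaT order 8} and~\ref{lem: gammaQ order 4} all apply to it.

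\textbf{Step 1 (the filtration).} Extend $\beta$ of Lemma~\ref{lem: K(B) ring} to a class $\beta\in K(P_{\mathbb H})$. It is the image of the Thom class of $W^\perp_1$ under $K(P_{\mathbb H},s(B))\cong\widetilde K(\mathrm{Th}(W^\perp_1))\to K(P_{\mathbb H})$. By Leray--Hirsch, $K(P_{\mathbb H})\cong K(B)\langle 1,\beta\rangle$. However, $\beta^2$ need no longer vanish, since its Chern character can now pick up a degree-$8$ term. By Lemma~\ref{lem: AHSS collapses}, the spectral sequence \eqref{eq: AHSS} collapses. The skeletal filtration of $P_{\mathbb H}$ therefore gives
\[
0=\mathcal F_{10}\subset\mathcal F_8\subset\mathcal F_6\subset\mathcal F_4\subset\mathcal F_2\subset\mathcal F_0\subset\mathcal F_{-2}=K^\sigma_{C_2}(P).
\]
Using the $E_2$-page computed above, the graded pieces are $\mathbb Z_2$-vector spaces spanned by $\gamma$; the $\alpha_i\gamma$; the classes $\alpha^2\gamma$ and $\beta\gamma$; the $\alpha_i\beta\gamma$; and finally $\alpha^2\beta\gamma$. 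Here the top piece $\mathcal F_8\cong\mathbb Z_2$ is generated by $\alpha^2\beta\gamma$, which is detected by $a^2 b\neq 0$ in $H^8(P_{\mathbb H};\mathbb Z_2)$.

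\textbf{Step 2 (the relation for $2\gamma$).} By Lemma~\ref{lem: gamma_R}, $(2-\alpha)\gamma\in\mathcal F_2$. Its restriction to $Q'$ is $2\gamma_{Q'}\neq 0$ by Lemma~\ref{lem: extension 2}, which forces a $\beta\gamma$-term. Its restriction to $T$ is determined by the proof of Lemma~\ref{lem: gammaT order 8}, which forces an $\alpha^2\gamma$-coefficient $s$. Passing to the lower filtration pieces as in Lemma~\ref{lem: gammaQ order 4}, I would write
\[
2\gamma=(\alpha+\beta+s\alpha^2+\beta\alpha'+t\,\alpha^2\beta)\gamma
\]
with $\alpha'$ a combination of the $\alpha_i$ and $t\in\{0,1\}$.

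\textbf{Step 3 (the computation).} Iterating the relation gives $8\gamma=(\alpha+\beta+\dots)^3\gamma$. I would expand this in $K(P_{\mathbb H})$ using $\alpha^3=0$, $\alpha_i\alpha_j=\pm\delta_{ij}\alpha_i^2$, and the value of $\beta^2$. The term $3\alpha^2\beta\gamma$ survives mod $2$. All other monomials either vanish or land in $\mathcal F_8$ with even coefficient, and $\mathcal F_8$ is annihilated by $2$. This would give $8\gamma=\alpha^2\beta\gamma\neq 0$.

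\textbf{Main obstacle.} The delicate point is $\beta^2$ and the products $\beta^2\alpha$ and $\beta^3$. I would first compute $\mathrm{ch}(\beta)$ from the Thom class of $W^\perp_1$. This should show that $\beta^2$ is a multiple of $(\text{top class})$ plus terms involving $\alpha\beta$ with even coefficients, so that these terms contribute only even multiples of $\alpha^2\beta\gamma$, or zero. If this control fails, the fallback is the pair $(P,Q)$. Lemma~\ref{lem: gammaQ order 4} shows that $4\gamma$ comes from $K^\sigma_{C_2}(P,Q)\cong K^\sigma_{C_2}(Q')\cong\mathbb Z_4$. Lemma~\ref{lem: gammaT order 8} shows that this preimage is a generator. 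One would then need to show that twice a generator is not in the image of the connecting map from $K^{\sigma-1}_{C_2}(Q)$. That image can in turn be analyzed with the collapsed spectral sequence for $Q$.
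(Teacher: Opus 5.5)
Your main route is correct, and it differs from the paper's. Your fallback is in fact the paper's proof. The paper never uses the full $8$-dimensional filtration of $K^{\sigma}_{C_2}(P)$. It proves two partial estimates: $4\gamma_T\neq 0$ over the $4$-dimensional base of $T$, and $4\gamma_Q=0$ over $\widetilde B_2$, where $\beta^2=0$ by Lemma~\ref{lem: K(B) ring}. It then glues them through the pair $(P,Q)$. There $4\gamma=f_3(\delta)$, with $\delta$ forced to generate $K^\sigma_{C_2}(P/Q)\cong\mathbb Z_4$. The map $f_3$ is injective because $K^{\sigma-1}_{C_2}(P)\to K^{\sigma-1}_{C_2}(Q)$ is surjective on collapsed $E_2$-pages, so the connecting map you would have to analyze is simply zero.

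Your primary argument instead derives one relation $2\gamma=x\gamma$ in $K^\sigma_{C_2}(P)$ and cubes it. The ``main obstacle'' you flag is not there. Write $x=\alpha+y$ with $y=\beta+s\alpha^2+\alpha'\beta+t\alpha^2\beta$, which lies in the fourth skeletal filtration of $K(P_{\mathbb H})$ since $\beta$ vanishes on $s(B)$. Then $x^3=\alpha^3+3\alpha^2y+3\alpha y^2+y^3$. The terms $\alpha y^2$ and $y^3$ have filtration at least $10$, so they vanish because $P_{\mathbb H}$ is $8$-dimensional. Also $\alpha^3=\alpha^2\alpha'=\alpha^4=0$ in $K(B)$. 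Hence $x^3=3\alpha^2\beta$ exactly, and $8\gamma=3\alpha^2\beta\gamma=\alpha^2\beta\gamma\neq 0$. You never need $\beta^2$ or $\mathrm{ch}(\beta)$: $\beta^2$ only enters $4\gamma=x^2\gamma$.

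The restriction to $T$ is also unnecessary, since $s$, $t$ and $\alpha'$ all drop out. Only two coefficients matter:
\begin{itemize}
\item the $\alpha$-coefficient, fixed by Lemma~\ref{lem: gamma_R};
\item the $\beta$-coefficient, fixed by restricting to $Q'$ via Lemma~\ref{lem: extension 2}.
\end{itemize}
A minor indexing point: with the paper's convention $\mathcal F_2=\ker(\,\cdot|_R)$, your chain should end with $\mathcal F_6\cong H^8(P_{\mathbb H};\mathbb Z_2)$ and $\mathcal F_8=0$. As written it has one step too many.

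Your route bypasses Lemmas~\ref{lem: gammaT order 8}, \ref{lem: gammaQ order 4}, \ref{lem: K(B) ring} and the injectivity of $f_3$. It also identifies $8\gamma$ precisely, as the nonzero element of the deepest filtration piece. The price is multiplicative bookkeeping over all of $P_{\mathbb H}$. You need the filtration of $K^\sigma_{C_2}(P)$ to be multiplicative for the skeletal filtration of $K(P_{\mathbb H})$. You also need all five graded pieces to be spanned by the products you list. This is the same kind of input the paper uses for $T$ and $Q$, now applied to the whole space.
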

\begin{proof}
We consider the commutative diagram 
\begin{equation*}
    \begin{tikzcd}
        R\arrow[r,hook]\arrow[d,hook]&T\arrow[r,two heads]\arrow[d]&T/R\arrow[d,hook]\\
        Q\arrow[r,hook]&P\arrow[r,two heads]&P/Q
    \end{tikzcd}
\end{equation*}
which induces the commutative diagram
\begin{equation}
\label{eq: diagram of K(subspace P)}
    \begin{tikzcd}
	{K^\sigma_{C_2}(R)} && {K^\sigma_{C_2}(T)} && {K^\sigma_{C_2}(T/R)\cong\mathbb{Z}_2} \\
	\\
	{K^\sigma_{C_2}(Q)} && {K^\sigma_{C_2}(P)} && {K^\sigma_{C_2}(P/Q)\cong\mathbb{Z}_4}
	\arrow[two heads, from=1-3, to=1-1]
	\arrow[hook', from=1-5, to=1-3]
	\arrow[two heads, from=3-1, to=1-1]
	\arrow["{f_2}"{description}, from=3-3, to=1-3]
	\arrow["{f_1}"{description}, two heads, from=3-3, to=3-1]
	\arrow["{f_4}"{description}, two heads, from=3-5, to=1-5]
	\arrow["{f_3}"{description}, hook', from=3-5, to=3-3]
\end{tikzcd}
\end{equation}


The two rows are exact. We justify the additional claims in \eqref{eq: diagram of K(subspace P)}:
\begin{itemize}
    \item $K^{\sigma}_{C_2}(T/R)\cong \mathbb{Z}_2$ because  $T/R\cong S^4\wedge S(3\sigma)_{+}$;
    \item $K^{\sigma}_{C_2}(P/Q)\cong \mathbb{Z}_4$ because $P/Q\cong S^4\wedge Q'_{+}$;
    \item The map $f_{3}$ is injective. Indeed, by the long exact sequence of the pair $(P, Q)$, it suffices to show that $K^{\sigma-1}_{C_2}(P) \to K^{\sigma-1}_{C_2}(Q)$ is surjective. The induced map of the Atiyah--Hirzebruch spectral sequence \eqref{eq: AHSS} is surjective on the second page, and both spectral sequences collapse by Lemma \ref{lem: AHSS collapses}. Hence the restriction map in $K$-theory is surjective.
\end{itemize}

By Lemma \ref{lem: gammaQ order 4}, we have $f_{1}(4\gamma)=4\gamma_{Q}=0$. By exactness of the second row, there exists $\delta\in K^{\sigma}_{C_2}(P/Q)$ such that $f_{3}(\delta)=4\gamma$. By Lemma \ref{lem: gammaT order 8}, we have $f_{2}(4\gamma)=4\gamma_{T}\neq 0$. By commutativity of the diagram, this implies $f_{4}(\delta)\neq 0$. So $\delta$ must be a generator of $K^{\sigma}_{C_2}(P/Q)$. Since $2\delta\neq 0$ and $f_{3}$ is injective, we have $8\gamma=f_{3}(2\delta)\neq 0$.
\end{proof}

\subsection{Completion of the proof}
Now we return to the map 
\begin{equation*}
    f\colon S^{\sigma}\wedge P_{+}\xrightarrow{\m{Id}\wedge q_{P}} S^{\sigma} \wedge S^{0}=S^{\sigma}\hookrightarrow  S^{8\sigma}  
\end{equation*}
as defined in \eqref{eq: P to S}. 
\begin{prop}\label{prop: f induces nontrivial map on K theory} The induced map
\begin{equation*}
    f^*\colon\widetilde{K}_{C_2}(S^{\sigma})\to \widetilde{K}_{C_2}(S^{\sigma}\wedge P_{+})
\end{equation*}
is nontrivial.
\end{prop}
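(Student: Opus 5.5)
The plan is to read $f^*$ on $\widetilde K_{C_2}(S^\sigma)$ as the pullback along the first factor $\m{id}\wedge q_P\colon S^\sigma\wedge P_+\to S^\sigma$ of \eqref{eq: P to S}. That factor is the $\sigma$-suspension of the collapse $P\to\mathrm{pt}$, so $f^*$ becomes the pullback
\begin{equation*}
    \pi_P^*\colon K^{\sigma}_{C_2}(\mathrm{pt})\longrightarrow K^{\sigma}_{C_2}(P).
\end{equation*}
To show it is nonzero, I will take a class $u$ in the source and restrict its image to the distinguished fiber $R'\cong S(3\sigma)$ of $\pi_1$ from the tower \eqref{eq: fibration tower}. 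Since the composite $R'\hookrightarrow P\to\mathrm{pt}$ is again the collapse map, it suffices to prove that $K^\sigma_{C_2}(\mathrm{pt})\to K^\sigma_{C_2}(S(3\sigma))$ is nonzero.

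\emph{Step 1: the source.} I apply the exact hexagon \eqref{eq: exact 6-gon v1} with $A=S^0$. Since $\widetilde K^1_{C_2}(S^0)=K^1_{C_2}(\mathrm{pt})=0$, it gives
\begin{equation*}
    \widetilde K_{C_2}(S^\sigma)=K^\sigma_{C_2}(\mathrm{pt})\cong\ker\bigl(R(C_2)\to\mathbb Z\bigr).
\end{equation*}
This kernel is infinite cyclic, generated by the class $u$ that maps to the $K$-theoretic Euler class $1-[\mathbb C_\sigma]$ of $\sigma$. In particular the source of $f^*$ is nonzero.

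\emph{Step 2: the fiber.} By the proof of Lemma \ref{lem: K(3sigma)},
\begin{equation*}
    K^\sigma_{C_2}(S(3\sigma))\cong\widetilde K\bigl((S^\sigma\wedge S(3\sigma)_+)/C_2\bigr).
\end{equation*}
The quotient on the right is the Thom space of the tautological real line bundle $\lambda\to\mathbb{RP}^2$, which is $\mathbb{RP}^3$. Under this identification, the pullback of $u$ is the $K$-theory Euler/Thom class of $\lambda$. Concretely, I will check that restricting it along the zero section $\mathbb{RP}^2\subset\mathbb{RP}^3$ gives $1-[\lambda\otimes\mathbb C]\in\widetilde K(\mathbb{RP}^2)\cong\mathbb Z_2$. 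This is the standard generator, hence nonzero. A class whose restriction is nonzero is itself nonzero, so $u|_{R'}\neq 0$, and therefore $f^*(u)\neq 0$.

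The main obstacle is Step 2: the identification of $u|_{R'}$ with the Thom class of $\lambda$ must be made compatibly with the isomorphism used in Lemma \ref{lem: K(3sigma)}. I would do this by naturality of the hexagon \eqref{eq: exact 6-gon v1} under $S(3\sigma)\to\mathrm{pt}$. On $S(3\sigma)$, the map $K^\sigma_{C_2}\to K_{C_2}$ is multiplication by the Euler class of $\sigma$. Because $C_2$ acts freely on $S(3\sigma)$, $K_{C_2}(S(3\sigma))\cong K(\mathbb{RP}^2)$, and this multiplication sends $u$ to $1-[\lambda_{\mathbb C}]$.

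As a by-product, $\gamma\coloneqq f^*(u)$ satisfies $\gamma|_{R'}\neq0$. Proposition \ref{prop: eight gamma nonzero} then shows $8\gamma\neq0$, which is the input needed afterwards for the composite through $S^{8\sigma}$.
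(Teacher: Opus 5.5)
\textbf{There is a genuine gap: you have proved the hypothesis of Proposition~\ref{prop: eight gamma nonzero}, not the statement itself.}

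Your Steps 1 and 2 are correct. They coincide with the part of the paper's argument showing that $\gamma=q_P^*(\theta)$ satisfies $i_1^*(\gamma)\neq 0$. The paper uses the same route: $j_1^*\theta=1-\sigma_{\mathbb C}$, pulled back to $K_{C_2}(S(3\sigma))\cong K(\mathbb{RP}^2)$, gives the nonzero class $1-[\lambda_{\mathbb C}]$.

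The problem is which map you are pulling back along. The map $f$ of \eqref{eq: P to S} is the full composite ending in $S^{8\sigma}$, so $f^*$ has source $\widetilde K_{C_2}(S^{8\sigma})\cong R(C_2)$. The $S^\sigma$ in the displayed source is a slip; the paper's proof evaluates $f^*$ on $1\in R(C_2)$. Only this version contradicts Lemma~\ref{lem: sw map implies null homotopic}, which asserts that the composite is null-homotopic, not the factor $\mathrm{id}\wedge q_P$. By reading $f^*$ as pullback along the first factor alone, you establish only $i_1^*\gamma\neq0$. The actual content of the proposition is left to an unproved ``afterwards''.

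What is missing is the effect of $j_2\colon S^\sigma\hookrightarrow S^{8\sigma}$, and the resulting identity $f^*(1)=8\gamma$. Here is the computation.
\begin{itemize}
\item Restricting the Bott class along $S^0\hookrightarrow S^{8\sigma}$ gives the Euler class $e_{8\sigma}=(1-\sigma_{\mathbb C})^4$.
\item Since $(1-\sigma_{\mathbb C})^2=2(1-\sigma_{\mathbb C})$, this equals $8(1-\sigma_{\mathbb C})$.
\item By your Step 1, $j_1^*\colon \widetilde K^\sigma_{C_2}(S^0)\to R(C_2)$ is injective with $j_1^*\theta=1-\sigma_{\mathbb C}$. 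This forces $j_2^*(1)=8\theta$.
\item Hence $f^*(1)=8\,q_P^*(\theta)=8\gamma$. Only now does Proposition~\ref{prop: eight gamma nonzero}, whose hypothesis is exactly your $i_1^*\gamma\neq0$, give $f^*\neq0$.
\end{itemize}

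This step is not a formality. Every $E_2$-term of \eqref{eq: AHSS} in total degree $0$ is a $\mathbb Z_2$-vector space, because $p$ and hence $q$ must be even. So $K^\sigma_{C_2}(P)$ is a finite $2$-group, and $\gamma\neq0$ by itself says nothing about $8\gamma$. With $S^{2N\sigma}$ in place of $S^{8\sigma}$ one would get $f^*(1)=2^{N-1}\gamma$, which vanishes for large $N$. The argument works only because the factor $8$ coming from $b^+(E(4))=7$ is beaten by the torsion estimate $8\gamma\neq 0$. Your proposal defers exactly this point.
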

\begin{proof}
We have the following commutative diagram 
\begin{equation*}
    \begin{tikzcd}
	&&&& {S^{8\sigma}} \\
	\\
	{S^\sigma\wedge S(3\sigma)} && {S^\sigma \wedge P_+} && {S^\sigma} \\
	\\
	{S(3\sigma)} && {P_+} && {S^0}
	\arrow["{\mathrm{Id}\wedge i_1}"{description}, hook, from=3-1, to=3-3]
	\arrow["f"{description}, from=3-3, to=1-5]
	\arrow["{\mathrm{Id}\wedge q_P}"{description}, two heads, from=3-3, to=3-5]
	\arrow["{j_2}"{description}, hook, from=3-5, to=1-5]
	\arrow["{j_1\wedge\mathrm{Id}}"{description}, hook, from=5-1, to=3-1]
	\arrow["{i_1}"{description}, hook, from=5-1, to=5-3]
	\arrow["{j_1\wedge\mathrm{Id}}"{description}, hook, from=5-3, to=3-3]
	\arrow["{q_P}"{description}, two heads, from=5-3, to=5-5]
	\arrow["{j_1}"{description}, hook, from=5-5, to=3-5]
\end{tikzcd}
\end{equation*}

Here $j_{1},j_{2}$ are standard embeddings of representation spheres. Applying the functor $\widetilde{K}_{C_2}$, we obtain the commutative diagram
\begin{equation}
    \label{eq: diagram K}
    \begin{tikzcd}
	&&&& {\widetilde{K}_{C_2}(S^{8\sigma})\cong R(C_2)} \\
	\\
	{\mathbb{Z}_2\cong K^\sigma_{C_2}(S(3\sigma))} && {K^\sigma_{C_2}(P)} && {\widetilde{K}^\sigma_{C_2}(S^0)\cong\mathbb{Z}\cdot\theta} \\
	\\
	{\mathbb{Z}\oplus\mathbb{Z}_2\cong K_{C_2}(S(3\sigma))} && {K_{C_2}(P)} && {\widetilde{K}_{C_2}(S^0)\cong R(C_2)} 
	\arrow["{f^*}"{description}, from=1-5, to=3-3]
	\arrow["{j_2^*}"{description},  from=1-5, to=3-5]
	\arrow["{j_1^*}"{description},  from=3-1, to=5-1]
	\arrow["{i_1^*}"{description},  from=3-3, to=3-1]
	\arrow["{j_1^*}"{description},  from=3-3, to=5-3]
	\arrow[hook', "{q_P^*}"{description}, from=3-5, to=3-3]
	\arrow["{j_1^*}"{description},  from=3-5, to=5-5]
	\arrow["{i_1^*}"{description},  from=5-3, to=5-1]
	\arrow[hook',"{q_P^*}"{description}, from=5-5, to=5-3]
    \end{tikzcd}
\end{equation}

We explain the additional claims in \eqref{eq: diagram K}.
\begin{itemize}
    \item By the Bott periodicity theorem, we have $\widetilde{K}_{C_2}(S^{8\sigma})\cong R(C_2)$. Furthermore, the map \begin{equation*}j^*_{1}\circ j^*_{2}\colon R(C_2)\to R(C_2)\end{equation*} is just the multiplication by the $K$-theoretic Euler class  
    \begin{equation*}
    e_{8\sigma}=(1-\sigma_{\mathbb{C}})^{4}=8(1-\sigma_{\mathbb{C}})
    \end{equation*}
    Here $\sigma_{\mathbb{C}}=\sigma\otimes \m{Id}_{\mathbb{C}}$ is the complex sign representation. In particular, we have 
    \begin{equation}
        \label{eq: image of 1 equals euler class}
        j^*_{1}\circ j^*_{2}(1)=8(1-\sigma_{\mathbb{C}})    
    \end{equation}
    \item We set  $A=S^0$ in the exact sequence \eqref{eq: exact 6-gon v1} and get 
    \begin{equation}
    \label{eq: exact 6-gon v2}
        \begin{tikzcd}
	& {\widetilde{K}_{C_2}(S^0)\cong R(C_2)} && {\widetilde{K}(S^0)\cong\mathbb{Z}} & \\
	{\widetilde{K}^\sigma_{C_2}(S^0)} &&&& {\widetilde{K}^{1+\sigma}_{C_2}(S^0)} \\
	& {0=\widetilde{K}^1(S^0)} && {\widetilde{K}^1_{C_2}(S^0)}
	\arrow["{\mathrm{rank}}"{description}, from=1-2, to=1-4]
	\arrow[from=1-4, to=2-5]
	\arrow["{j_1^*}"{description}, from=2-1, to=1-2]
	\arrow[from=2-5, to=3-4]
	\arrow[from=3-2, to=2-1]
	\arrow[from=3-4, to=3-2]
\end{tikzcd}
    \end{equation}
    
Therefore, the group $\widetilde{K}^{\sigma}_{C_2}(S^0)$ is a free abelian group generated by an element $\theta$ with 
\begin{equation}
\label{eq: j1 theta}
    j^*_{1}(\theta)=1-\sigma_{\mathbb{C}}.
\end{equation}
By \eqref{eq: image of 1 equals euler class} and \eqref{eq: j1 theta}, we have 
\begin{equation}
\label{eq: image of j2}
    j^*_{2}(1)=8\theta.  
\end{equation}
\item Since the $C_2$-action on $S(3\sigma)$ is free, we have
\begin{equation*}
    K_{C_2}(S(3\sigma))\cong K(S(3\sigma)/C_2)=K(\mathbb{RP}^2)\cong \mathbb{Z}\oplus \mathbb{Z}_2.
\end{equation*}
\item The map 
\begin{equation*}
    i_1^*\circ q^*_{2}\colon R(C_2)\to K_{C_2}(S(3\sigma))\cong K(\mathbb{RP}^2)
\end{equation*}
is induced by the constant map $S(3\sigma)\to \ast$. Hence by definition, the element $i_1^*\circ q^*_{2}(\sigma_{\mathbb{C}})\in K(\mathbb{RP}^2)$ is represented by the bundle 
\begin{equation*}
    \mathbb{C}\hookrightarrow S(3\sigma)\times_{C_2} \sigma_{\mathbb{C}} \to  \mathbb{RP}^2
\end{equation*}
The first Chern class of this bundle is the generator of $H^{2}(\mathbb{RP}^2)=\mathbb{Z}_2$. Hence the bundle is not stably trivial. This implies 
\begin{equation}
\label{eq: i q_P sigma nonzero}
    i_1^*\circ q^*_{2}(\sigma_{\mathbb{C}}-1)\neq 0.    
\end{equation}
\end{itemize}
We let $\gamma=q^*_{2}(\theta)$. Then by \eqref{eq: j1 theta}, \eqref{eq: i q_P sigma nonzero} and commutativity of the  diagram \eqref{eq: diagram K}, we see that  $i_1^*(\gamma)\neq 0$. Therefore, we can apply Proposition \ref{prop: eight gamma nonzero} and deduce that $8\gamma\neq 0$. On the other hand, by \eqref{eq: image of j2}, we have $f^{*}(1)=8\gamma$. Hence the map $f^*$ is nontrivial. This finishes the proof.
\end{proof}
We can finally finish the proof of Theorem \ref{mainthm: X non spin}.
\begin{proof}[Proof of Theorem \ref{mainthm: X non spin} in the case $n=2$] Suppose there exists a smooth bundle  $F\hookrightarrow X\to B$ such that $w_{2}(V\pi)^2\neq 0$. Then the desuspended Seiberg--Witten map $\mathsf{sw}_B$ exists. By Lemma \ref{lem: sw map implies null homotopic}, the map $f$ is null homotopic. But this contradicts Proposition \ref{prop: f induces nontrivial map on K theory}.
\end{proof}

\bibliographystyle{plain}  
\bibliography{references}
\end{document}